\numberwithin{equation}{section}
\theoremstyle{plain}
\newtheorem{lemma}{Lemma}[section]
\newtheorem{proposition}[lemma]{Proposition}
\newtheorem{corollary}[lemma]{Corollary}
\theoremstyle{definition}
\newtheorem{definition}[lemma]{Definition}
\newtheorem{remark}[lemma]{Remark}
\newtheorem{example}[lemma]{Example}
\begin{document}
\newcommand{\R}{{\mathbb R}}
\newcommand{\C}{{\mathbb C}}
\newcommand{\F}{{\mathbb F}}
\renewcommand{\O}{{\mathbb O}}
\newcommand{\Z}{{\mathbb Z}} 
\newcommand{\N}{{\mathbb N}}
\newcommand{\Q}{{\mathbb Q}}
\renewcommand{\H}{{\mathbb H}}

\newcommand{\Aa}{{\mathcal A}}
\newcommand{\Bb}{{\mathcal B}}
\newcommand{\Cc}{{\mathcal C}}    
\newcommand{\Dd}{{\mathcal D}}
\newcommand{\Ee}{{\mathcal E}}
\newcommand{\Ff}{{\mathcal F}}
\newcommand{\Gg}{{\mathcal G}}    
\newcommand{\Hh}{{\mathcal H}}
\newcommand{\Kk}{{\mathcal K}}
\newcommand{\Ii}{{\mathcal I}}
\newcommand{\Jj}{{\mathcal J}}
\newcommand{\Ll}{{\mathcal L}}    
\newcommand{\Mm}{{\mathcal M}}    
\newcommand{\Nn}{{\mathcal N}}
\newcommand{\Oo}{{\mathcal O}}
\newcommand{\Pp}{{\mathcal P}}
\newcommand{\Qq}{{\mathcal Q}}
\newcommand{\Rr}{{\mathcal R}}
\newcommand{\Ss}{{\mathcal S}}
\newcommand{\Tt}{{\mathcal T}}
\newcommand{\Uu}{{\mathcal U}}
\newcommand{\Vv}{{\mathcal V}}
\newcommand{\Ww}{{\mathcal W}}
\newcommand{\Xx}{{\mathcal X}}
\newcommand{\Yy}{{\mathcal Y}}
\newcommand{\Zz}{{\mathcal Z}}

\newcommand{\zt}{{\tilde z}}
\newcommand{\xt}{{\tilde x}}
\newcommand{\Ht}{\widetilde{H}}
\newcommand{\ut}{{\tilde u}}
\newcommand{\Mt}{{\widetilde M}}
\newcommand{\Llt}{{\widetilde{\mathcal L}}}
\newcommand{\yt}{{\tilde y}}
\newcommand{\vt}{{\tilde v}}
\newcommand{\Ppt}{{\widetilde{\mathcal P}}}
\newcommand{\bp }{{\bar \partial}} 

\newcommand{\Remark}{{\it Remark}}
\newcommand{\Proof}{{\it Proof}}
\newcommand{\ad}{{\rm ad}}
\newcommand{\Om}{{\Omega}}
\newcommand{\om}{{\omega}}
\newcommand{\eps}{{\varepsilon}}
\newcommand{\Di}{{\rm Diff}}
\newcommand{\Pro}[1]{\noindent {\bf Proposition #1}}
\newcommand{\Thm}[1]{\noindent {\bf Theorem #1}}
\newcommand{\Lem}[1]{\noindent {\bf Lemma #1 }}
\newcommand{\An}[1]{\noindent {\bf Anmerkung #1}}
\newcommand{\Kor}[1]{\noindent {\bf Korollar #1}}
\newcommand{\Satz}[1]{\noindent {\bf Satz #1}}

\renewcommand{\a}{{\mathfrak a}}
\renewcommand{\b}{{\mathfrak b}}
\newcommand{\e}{{\mathfrak e}}
\renewcommand{\k}{{\mathfrak k}}
\newcommand{\pg}{{\mathfrak p}}
\newcommand{\g}{{\mathfrak g}}
\newcommand{\gl}{{\mathfrak gl}}
\newcommand{\h}{{\mathfrak h}}
\renewcommand{\l}{{\mathfrak l}}
\newcommand{\sm}{{\mathfrak m}}
\newcommand{\n}{{\mathfrak n}}
\newcommand{\s}{{\mathfrak s}}
\renewcommand{\o}{{\mathfrak o}}
\newcommand{\so}{{\mathfrak so}}
\renewcommand{\u}{{\mathfrak u}}
\newcommand{\su}{{\mathfrak su}}
\newcommand{\ssl}{{\mathfrak sl}}
\newcommand{\ssp}{{\mathfrak sp}}
\renewcommand{\t}{{\mathfrak t }}
\newcommand{\Cinf}{C^{\infty}}
\newcommand{\la}{\langle}
\newcommand{\ra}{\rangle}
\newcommand{\half}{\scriptstyle\frac{1}{2}}
\newcommand{\p}{{\partial}}
\newcommand{\notsub}{\not\subset}
\newcommand{\iI}{{I}}               
\newcommand{\bI}{{\partial I}}      
\newcommand{\LRA}{\Longrightarrow}
\newcommand{\LLA}{\Longleftarrow}
\newcommand{\lra}{\longrightarrow}
\newcommand{\LLR}{\Longleftrightarrow}
\newcommand{\lla}{\longleftarrow}
\newcommand{\INTO}{\hookrightarrow}
\newcommand{\supp}{\operatorname{supp}}
\newcommand{\QED}{\hfill$\Box$\medskip}
\newcommand{\UuU}{\Upsilon _{\delta}(H_0) \times \Uu _{\delta} (J_0)}
\newcommand{\bm}{\boldmath}

\title[Smooth structures on conical pseudomanifolds ]{\large Smooth structures on pseudomanifolds 
with isolated conical singularities}
\author{H\^ong V\^an L\^e, Petr Somberg and Ji\v r\'i Van\v zura} 
\thanks{H.V.L. and J.V. are supported in part by  RVO:6798540, P.S. and J.V. were supported in part by MSM 0021620839 and GACR 201/08/0397}

\medskip

\abstract  In this note we introduce the notion of a smooth structure on a conical pseudomanifold  $M$  in terms of   $C^\infty$-rings of smooth functions on $M$. For  a finitely generated smooth structure  $C^\infty  (M)$ we  introduce the notion of  the Nash tangent bundle,  the Zariski tangent bundle,  the  tangent bundle  of $M$, and  the notion
of characteristic classes of  $M$.  We   prove the vanishing of a Nash vector field  at a  singular point  for a special class of  Euclidean smooth structures on $M$.  We introduce the notion of  a conical symplectic  form on $M$ and show that
it is smooth with respect to a  Euclidean smooth structure on $M$. If  a conical  symplectic structure is also smooth with respect to  a 
compatible Poisson smooth structure $C^\infty (M)$, we show that its Brylinski-Poisson homology    groups coincide with the de Rham homology  groups of $M$.  We show nontrivial examples of  these  smooth  conical  symplectic-Poisson pseudomanifolds.
\endabstract

\maketitle
\tableofcontents

{\it  AMSC: 51H25, 53D05, 53D17}

{\it Key words:  $C^\infty$-ring, conical pseudomanifold,  symplectic form, Poisson structure}

\section{Introduction}

Since the second half of the last century the theory of smooth manifolds has been  extended from various points of view to a large class of topological spaces admitting
singularities, see e.g. \cite{Dubuc1981}, \cite{GS2003}, \cite{Joyce2009}, \cite{Kock2006}, \cite{MR1991}, \cite{Mostow1979},  \cite{SL1991}, \cite{Smith1966}. Roughly speaking,  a  $C^k$-structure, $1\le k \le \infty$, on a topological space $M$ is defined by a choice of a subalgebra  $C^k(M)$ of the $\R$-algebra  $C^0 (X)$ of all continuous $\R$-valued functions  on $M$, which satisfies certain axioms varying in different  approaches.   Most of efforts have been spent on construction of a convenient category of smooth spaces, which should satisfy good formal properties, see \cite{BH2008} for a survey. Notably, the theory of  de Rham cohomology has
been extended to a large class of singular spaces, see \cite{Mostow1979}, \cite{Smith1966}.  

In this note we develop the  theory of smooth structures on  singular spaces in a different direction.
We pick a  class of topological spaces and ask, if we can  provide   these spaces with  a family of reasonable  smooth
structures and what is the best smooth structure on    a singular space.  This question is motivated by the   question of finding  the best compactification  of an  open smooth manifold. We are looking not only for an extension of  classical  theorems on smooth manifolds, but we are also looking for new phenomena  on these manifolds, which are caused
by presence of   nontrivial singularities.

We study in this note   pseudomanifolds with  isolated conical singularities. Our choice is motivated by the following reasons.  Firstly, isolated conical singularities are geometrically the simplest possible, but they already serve to illustrate new phenomena that are typical for the more general situation.   Secondly, the theory    of smooth structures on singular spaces should include  investigations related to different geometric structures  compatible  with   these smooth structures.  A  closely  related  field of research has been developed since  Cheeger wrote  the seminal paper on    spectral geometry of   Riemannian spaces with isolated conical singularities \cite{Cheeger1979}.  We would like to  emphasize that Cheeger and   other people working on spectral geometry and index theory on singular spaces, e.g. \cite{Cheeger1983}, \cite{DLN2008}, \cite{Lesch1996},  deal with the  analysis on the open
regular strata  $M^{reg}$ of  a  compact singular space  $M$.  Although $M^{reg}$ is
open, for a large class of spaces the compactness of $M$ forces
the most fundamental features of the theory on compact
manifolds to continue to hold for $M^{reg}$.   They did not consider $M$ as a  smooth space.
 
The plan of our note is as follows.  In   section  2 we introduce the notion of a pseudomanifold  $M$  with isolated conical singularities, which we  will  abbreviate as {\it a pseudomanifold w.i.c.s.}, their cotangent bundles,  the notion of  smooth functions, smooth differential forms on these spaces  and the notion of  smooth mappings between these spaces.   Known and new examples  are given (Example \ref{res}), some important properties of these smooth structures are  proved (Lemma \ref{main1}, Proposition \ref{fin},  Corollary \ref{germ}), which are important in later sections. Our approach is  close to the  approach by
Mostow in \cite{Mostow1979}, which is  formalized in the theory of $C^\infty$-rings  as in \cite{MR1991}.  Roughly speaking,  a smooth structure on a  pseudomanifold w.i.c.s. is specified by  the canonical smooth structure on its regular stratum and a  smooth structure  around its singular points, which dictates the way to ``compactify" the smooth structure  around the singular point 
(Definition \ref{smooth}).  In  section 3 we consider  finitely generated  smooth structures. We introduce  different notions of  tangent bundles of a  smooth pseudomanifold  w.i.c.s.,  which leads to the notion of characteristic classes of a  finitely generated smooth pseudomanifold  w.i.c.s. (Remark \ref{char}). We  investigate    some properties of   related smooth vector fields (Proposition  \ref{dynamic},  Lemma \ref{zaris}.2).   
 In section 4 we introduce the notion of a conical symplectic form on a   pseudomanifold  w.i.c.s. $M$. We show that  this  symplectic form is smooth with respect to a  Euclidean smooth structure   on $M$  (Corollary \ref{smoothom}), and it possesses  a unique up to homotopy  compatible  $C^1$-smooth conical Riemmanian metric  (Lemma \ref{comp}).   We also show that if this  conical symplectic form  is compatible  with a    Poisson smooth structure on $M$,      the symplectic homology of $M$ is well-defined  (Remark \ref{SLP}, Lemma \ref{bryl}). If the symplectic form is also smooth with respect to the compatible Poisson  smooth structure, we prove that the  Brylinski-Poisson homology
coincides with    the de Rham cohomology of $M$  with reverse grading  (Corollary \ref{hodge}).  
In Remark \ref{SLP}  we show non-trivial examples  of   smooth Poisson structures  which are compatible with a conical  symplectic structure  on $M$.
In  section  5 we summarize our  main results, and  pose some  questions for further investigations.

\section{Pseudomanifolds  w.i.c.s. and their smooth structures}

In this section we introduce the notion of  a pseudomanifold w.i.c.s.  $M$, their cotangent bundles,  the notion of  a smooth structure, smooth differential forms on these spaces  and the notion of  smooth mappings between these spaces.
     We provide  known and new examples,  representing   the algebra of  smooth functions in terms  of generators and relations  (Example \ref{res}, Lemma \ref{wcone}.1,  Remark \ref{exotic}). We compare our concepts
with some existing concepts.  We prove some important properties of  these smooth structures  (Lemma \ref{main1}, Proposition \ref{fin},  Corollary \ref{germ}, Lemma \ref{wcone}.2), 
and we characterize the removability of a singular point $s\in M$ in terms of  the local  algebra of smooth functions on  a neighborhood   $\Nn_s$ of $s$  (Lemma \ref{opt}).

If $L$ is  a smooth manifold, the cone over $L$  is the topological space
$$cL : = L \times [0, \infty)\, /L\times \{0\}.$$
The image  of $L \times \{ 0\}$ is the singular point of    cone $cL$. Let $[z,t]$ denote the image of $(z,t)$ in $cL$
under the projection $\pi: L \times [0,\infty) \to cL$.  
Let $\rho_{cL} : cL \to [0,\infty)$ be defined by $ \rho_{cL}([z, t]): = t$. We call $\rho_{cL}$ the { \it defining function of the cone}.
For any $\eps  > 0$ we denote by $cL (\eps)$  the open subset $\{ [z, t] \in cL| \:  t< \eps \}$.

\begin{definition}  (cf. \cite[Definition 1.1]{DLN2008}) A   second-countable locally compact Hausdorff topological space  $M^m$  is called {\it  a    pseudomanifold  with isolated conical singularity}  of dimension $m$, if  there is a finite set $S$ (or $S_{M^m}$) of  isolated singular points  $s_i \in M^m$ such that:

1. $M^m \setminus \cup _i \{ s_i \}$ is an open smooth manifold  $M^{reg}$ of dimension $m$.

2. For each  singular point $s$  there is  an open neighborhood $\Nn_s$ of $s$  together  with a   homeomorphism
$\phi_s : \Nn _s \to  cL_s(\eps_s)$, where $L_s$ is a closed smooth manifold, $\eps_s >0$, and  the restriction of
$\phi_s$ to $\Nn_s \setminus \{ s\}$ is a smooth diffeomorphism on its image. 

3. If $s_0 , s_1 \in S$, then either  $\Nn_{s_0} \cap \Nn_{s_1} = \emptyset $, or $s_0 = s_1$.
\end{definition}

 The  smooth manifold $M^{reg} : = M^m \setminus S$  is called  the {\it regular stratum} of $M^m$, and $L_s$ (or 
 simply $L$) is called
{\it the singularity link} of a singular point $s$.
The map $\phi_s: \Nn_s\to cL(\eps_s)$ is called {\it a singular chart} (around a singular point $s$).  We also denote by $\Nn_s (\eps)$ the preimage $\phi_s ^{-1} (cL(\eps))$ for  $0<\eps\leq\eps _s$.

For the simplicity of exposition we
assume in this note that $M^{reg}$ and $L_s$ are orientable and $M^{reg}$ is connected.

\begin{example}\label{exam1} 1. Let $M$ be a  smooth manifold with  boundary $\p M = L$  which is a disjoint union of
$k$ compact connected components  $L_i$.  An easy way  to construct a   pseudomanifold w.i.c.s. is to glue to  $M$  the closed  cone 
$\bar cL: = L \times [0,1]/L \times \{0\}$, or  to glue to $M$  the union of  the closed cones
$\bar cL_i$ along the boundary  $\p M = L \times \{ 1\} = \cup_i (L_i \times \{1\})$.

2. The quadric $Q _m =\{ z\in \C^{m+1}|\, \sum _{i =1} ^{m +1}z_i ^ 2 = 0\}$ with isolated singularity at $0$ is a  pseudomanifold  w.i.c.s..    The quadric $Q_m $    is  a cone over $L = Q_m \cap S^{2m+1}(\sqrt 2)$,
where $S^{2m+1}(\sqrt 2)$ is  the sphere of radius  $\sqrt 2$ in $\C^{m+1}$.  It is easy to see that $L$ consists of  all pairs $(x, \sqrt{-1} y) \in  S^m (1)\times S^{m}(1) \subset\R^{m +1} \oplus \sqrt{-1}\R^{m +1} = \C^{m +1}$ such that $ \la x, y \ra  = 0$. Hence
$L$ is diffeomorphic  to the  real Stiefel manifold $V_{2,m +1} = SO (m+1)/SO (m -1)$.

3.  Any smooth manifold with $k$   marked points   is a  pseudomanifold  w.i.c.s. with singular points  being the marked points.
\end{example}

Now let us  introduce the notion of a smooth structure on a  pseudomanifold w.i.c.s. by refining the Mostow's concept \cite[\S 1]{Mostow1979}. We    denote by $C^\infty (X^{reg})$ (resp.  $C^\infty _0 (X^{reg})$)
   the  space of smooth functions on $X^{reg}$ (resp.  the space of
smooth functions with compact support  in $X^{reg}$). 
Note that any function $f\in C^\infty_0 (X^{reg})$ has a unique
extension to  a  continuous function $j_*f$ on $X$ by setting $j_*f(x): = 0$ if $x \in X\setminus X^{reg}$.
The image $j_* (C^\infty_0 (X^{reg}))$  is a sub-algebra of $C^0 (X)$.

\begin{definition}\label{smooth} {\it  A smooth structure} on  a pseudomanifold w.i.c.s. $M$  is a choice of   a subalgebra $C^\infty (M)$ of the algebra $C^0(M)$  of all real-valued continuous functions  on $M$    satisfying the  following  three properties.

1. $C^\infty(M)$ is a   germ-defined $C^\infty$-ring, i.e. it is the $C^\infty$-ring of  all  sections  of  a sheaf  $SC^\infty (M)$  of continuous real-valued functions (for each open set $U \subset M$ there is a  collection $C^\infty (U)$ of continuous real-valued functions  on $U$  such that  the rule $U \mapsto C^\infty (U)$ defines the sheaf  $SC^\infty(M)$, moreover, for any $n$ if $f_1, \cdots ,f_n \in C^\infty (U)$  and $g \in C^\infty (\R^n)$, then $g(f_1, \cdots, f_n) \in C^\infty(U)$ \cite[\S 1]{Mostow1979}).

2. $ C^\infty (M)_{|M^{reg}} \subset C^\infty (M^{reg})$.

3. $j_*(C^\infty _0 (M^{reg})) \subset  C^\infty (M)$. 

\end{definition}

We refer the reader to  \cite{MR1991} for the theory  of  $C^\infty$-algebras.

\begin{lemma}\label{main1}  Any  smooth structure on a pseudomanifold w.i.c.s. $M$ satisfies  the following   partial invertibility.  If $f  \in  C^\infty (M)$ is nowhere vanishing, then
$ 1/f  \in C^\infty  (M)$.
\end{lemma}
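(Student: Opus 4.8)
The plan is to exploit the two structural features of a smooth structure that Definition \ref{smooth} makes available: the germ-defined (sheaf) nature of $C^\infty(M)$ in part 1, and its closure under composition with functions $g \in C^\infty(\R^n)$. The obstruction to applying the composition axiom directly with $g(x) = 1/x$ is that the reciprocal map is not an element of $C^\infty(\R)$, being undefined at $0$. The whole strategy is therefore to localize so that $f$ stays bounded away from $0$, to replace $1/x$ by a genuinely smooth function that agrees with it on the relevant range, and then to glue the local reciprocals using the sheaf property.

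First I would observe that, since $f$ is continuous and nowhere vanishing, the function $1/f$ is already a well-defined element of $C^0(M)$; what remains is to show it lies in $C^\infty(M)$. I would fix an arbitrary point $p \in M$. Because $f(p) \neq 0$ and $f$ is continuous, I can choose a connected open neighborhood $U_p$ of $p$ on which $|f| \geq c_p$ for some constant $c_p > 0$. By connectedness $f$ has constant sign on $U_p$, so its image $f(U_p)$ is contained in one of the half-lines $\{x : x \geq c_p\}$ or $\{x : x \leq -c_p\}$, and in particular avoids a full neighborhood of the origin.

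Next I would choose $g_p \in C^\infty(\R)$ with $g_p(x) = 1/x$ for all $x$ satisfying $|x| \geq c_p$; such a $g_p$ exists because $1/x$ is already smooth away from $0$, and one only needs to interpolate it smoothly across the interval $(-c_p, c_p)$ by a standard cutoff. The restriction $f|_{U_p}$ is a section of the sheaf $SC^\infty(M)$ over $U_p$, that is $f|_{U_p} \in C^\infty(U_p)$, so the composition axiom in Definition \ref{smooth}.1 gives $g_p(f|_{U_p}) \in C^\infty(U_p)$. Since $f$ takes values on $U_p$ only where $g_p$ agrees with $1/x$, the function $g_p(f|_{U_p})$ coincides with $(1/f)|_{U_p}$, whence $(1/f)|_{U_p} \in C^\infty(U_p)$.

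Finally, the neighborhoods $\{U_p\}_{p \in M}$ form an open cover of $M$, and the local sections $(1/f)|_{U_p}$ agree on overlaps, being restrictions of the single continuous function $1/f$. As $C^\infty(M)$ is germ-defined, i.e. it is precisely the ring of global sections of $SC^\infty(M)$, the sheaf gluing axiom produces a section over $M = \bigcup_p U_p$ restricting to each $(1/f)|_{U_p}$, namely $1/f$ itself, so $1/f \in C^\infty(M)$. The one genuine subtlety I expect is the localization step that produces a neighborhood on which $f$ is bounded away from zero: this is exactly what converts the globally non-smooth reciprocal into a locally legitimate application of the composition axiom, after which the sheaf property does the rest.
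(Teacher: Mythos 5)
Your proof is correct and follows essentially the same route as the paper's: localize to a neighborhood where $f$ avoids an interval around $0$, replace $1/x$ by a function in $C^\infty(\R)$ agreeing with it on the range of $f$ (the paper builds this as $G = 1/\psi$ with $\psi = \mathrm{Id}$ on $f(U)$, you by direct smooth interpolation across $(-c_p, c_p)$ --- the same device), apply the composition axiom of Definition \ref{smooth}.1, and conclude by the germ-defined property. Your explicit sheaf-gluing step is merely a spelled-out version of the paper's ``it suffices to show that locally $1/f$ is a smooth function,'' and the constant-sign observation, while harmless, is not needed since $|f|\geq c_p$ already keeps $f(U_p)$ away from the interpolation zone.
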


\begin{proof} Assume that  $f  \in  C^\infty (M)$ is nowhere vanishing. It suffices to show that  locally $1/f$  is a smooth function.
Since $ f\not = 0$, shrinking   a neighborhood $U$ of $x$ if necessary,
 we can   assume that  there is an open interval $(-\eps, \eps)$   which has no intersection with $f(U)$.  Now there exists a smooth function $\psi:  \R \to \R$  such that  

a)  $\psi_{| (U)} = Id$,

b) $(-\eps/2, \eps/2)$   does not intersect  with  $\psi (\R)$.

Clearly  $G : \R \to \R$ defined by $G (x) = \psi (x) ^{-1}$ is a smooth function.  Note that $ 1/f(y)  =  G (f(y))$
for all $y \in U$. This completes the proof  of our claim.
\end{proof}

\begin{example}\label{res} 1. Let $\tilde M$ be an orientable  smooth manifold with  a connected orientable boundary $\p \tilde M = L$ and $M$   obtained from $\tilde M$ by collapsing $L$ to a point, see Example  \ref{exam1}.1.   Let $C^\infty(\tilde M)$  be the canonical smooth structure on $\tilde M$.  Denote by $\pi :\tilde M \to M$ the
surjective continuous  map  which  is 1-1 on $\tilde M \setminus L$  to its image $M^{reg}$.   We set 
$$C^\infty _w (M):= \{ f \in C^0 (M)|\, \pi^* (f) \in  C^\infty (\tilde M)\}. $$
It is easy to see that $C^\infty _w (M)$ 
satisfies the conditions in Definition \ref{smooth}.  We  call ${\tilde M}$
{\it the canonical resolution of  $M$}.

2. Let $L = S^n$ and $X$ be the blowup of the point of origin $0\in \R^{n+1}$ , i.e.  $X = \{ (x, l) \in \R ^{n+1} \times \R P ^n |\, x\in l\}$.  Let $\pi: X \to \R^{n +1}= cL$ be the projection  on the first factor.  We set $C^\infty_{rp} (cL): =\{ f \in C^0 (cL)| \pi ^* (f) \in C^\infty (X)\}$. It is easy to see that $C^\infty_{rp} (cL)$ is a smooth structure according to Definition  \ref{smooth}.

3. Let $L = S^{2n +1}$ and  $X$ be a blowup of the point of origin $0\in \C^{n+1}$ and $\pi : X \to cL= \C^{n+1}$ be  the  canonical projection.
Using this resolution $(X \stackrel{\pi}{\to} cL)$  we  define another smooth structure  $C^\infty _{cp}(cL)$ on $cL$,
which   also satisfies the condition in Definition \ref{smooth}.

4. Let  $M$ be a pseudomanifold w.i.c.s. and $\tilde M $ be a smooth  manifold.  We call $\tilde M$ {\it  a  resolution of
$M$} if  there exists a continuous surjective map $\pi : \tilde M \to M$  such that the restriction of $\pi$ to $\tilde M \setminus
\pi^{-1} (S_M)$ is a smooth diffeomorphism on its image. Using the same construction as in   examples above
we define a {\it  resolvable}  smooth structure $C^\infty _{\tilde M} M$ on $M$.   We observe that there are
many non-diffeomorphic   resolutions  of  a given  conical  pseudomanifold, which lead to different smooth structures
on $M$, e.g. Examples \ref{res}.2, \ref{res}.3.

5. Let $C^\infty _1(M)$ and $C^\infty _2 (M)$ be smooth structures on $M$. Then $C^\infty _1 (M) \cap C^\infty_2 (M)$ is a smooth structure on $M$.
\end{example}


\begin{definition}\label{map} 
Let $M$ and $N$ be conical pseudomanifolds  provided with smooth structures $C^\infty (M)$ and $C^\infty (N)$ respectively.  
A  continuous map  $\sigma : M \to N$ is called  {\it a smooth map}, if $\sigma ^*(f) \in C^\infty (M)$ 
  for all  $f \in C^\infty (N)$. 
\end{definition}

\begin{remark}\label{rsmooth}   Denote by $i$ the inclusion $M^{reg} \to M$. Condition (1) in Definition \ref{smooth} implies that $i$ is a smooth map.   Since the kernel of the homomorphism $i^*:C^\infty (M) \to C^0 (M^{reg})$  is zero,
we can  regard $C^\infty (M)$ as a subalgebra of $C^\infty (M^{reg})$.  In the same way we can regard
$C^\infty _0 (M^{reg})$ as a subalgebra of $C^\infty (M)$.
\end{remark}

The existence of a  smooth partition of unity on a  smooth  space is  an important condition for the validity of many
theorems in analysis and geometry \cite{Mostow1979}, for example  it is used in the proof of Lemma \ref{comp} below. 

\begin{lemma}\label{sing}
Let $s\in S$ and let $U$ be a neighborhood of $s$. Then there exists a function $f\in C^{\infty}(M)$ such that
\begin{enumerate}
\item $0\leq f\leq 1$ on $M$;
\item $f(s)=1$; 
\item $f=0$ outside $U$.
\end{enumerate}
\end{lemma}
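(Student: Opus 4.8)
The plan is to build the bump function by hand inside a singular chart and then verify membership in $C^\infty(M)$ through the germ-defined (sheaf) property of Definition \ref{smooth}.1, rather than trying to produce it directly from the compactly supported functions of Definition \ref{smooth}.3. The point to keep in mind is that every function of the form $j_*(g)$ with $g\in C^\infty_0(M^{reg})$ vanishes at $s$, so such functions can never take the value $1$ there; the value $1$ at $s$ must come from a locally constant piece of $f$.

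First I would set up a radial cutoff near $s$. Let $\rho:=\rho_{cL}\circ\phi_s:\Nn_s\to[0,\eps_s)$ be the pullback of the defining function of the cone; since $\phi_s$ is a diffeomorphism off $s$, $\rho$ is smooth on $\Nn_s\setminus\{s\}$. Because $L_s$ is compact, the sets $\Nn_s(\delta)=\phi_s^{-1}(cL(\delta))$ form a neighborhood basis of $s$, so I can fix $\delta$ with $0<\delta<\eps_s$ and $\Nn_s(\delta)\subset U$; moreover $\overline{\Nn_s(2\delta/3)}=\phi_s^{-1}(\{\rho\le 2\delta/3\})$ is compact and contained in $\Nn_s$. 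Choosing a smooth $\chi:\R\to[0,1]$ with $\chi\equiv 1$ on $(-\infty,\delta/3]$ and $\chi\equiv 0$ on $[2\delta/3,\infty)$, I define
\[
f(x)=\begin{cases}\chi(\rho(x)), & x\in\Nn_s,\\ 0, & x\in M\setminus\overline{\Nn_s(2\delta/3)}.\end{cases}
\]
The two prescriptions agree on the overlap (where $\rho\ge 2\delta/3$ forces $\chi\circ\rho=0$), so $f$ is a well-defined continuous function with $0\le f\le 1$, with $f(s)=\chi(0)=1$, and with $\supp f\subset\overline{\Nn_s(2\delta/3)}\subset U$; thus properties (1)--(3) are immediate and only $f\in C^\infty(M)$ remains.

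Since $C^\infty(M)$ is germ-defined, $f\in C^\infty(M)$ follows from the sheaf gluing axiom once I check that $f$ is a local section over each member of an open cover of $M$. I would use $V_1=\Nn_s(\delta/3)$, $V_2=\Nn_s\setminus\{s\}\subset M^{reg}$, and $V_3=M\setminus\overline{\Nn_s(2\delta/3)}$; these cover $M$ because $\overline{\Nn_s(2\delta/3)}\subset\Nn_s$. On $V_1$ one has $\rho<\delta/3$, so $f\equiv 1$ is constant and hence lies in $C^\infty(V_1)$ (constants belong to every $C^\infty$-ring), and on $V_3$ likewise $f\equiv 0\in C^\infty(V_3)$. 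The only substantive case is $V_2$, where $f=\chi\circ\rho$ is an ordinary smooth function on a subset of $M^{reg}$. To see it is a section of $SC^\infty(M)$ over $V_2$ I argue locally: around any $x\in V_2$ pick a coordinate ball $B$ with $\overline{B}\subset V_2$ compact and a standard bump $\beta\in C^\infty_0(V_2)$ with $\beta\equiv 1$ on $B$; then $\beta\cdot(\chi\circ\rho)\in C^\infty_0(M^{reg})$, so by Definition \ref{smooth}.3 its extension by zero lies in $C^\infty(M)$, and its sheaf restriction to $B$ equals $f|_B$. Hence $f$ is a local section near each point of $V_2$, so $f|_{V_2}\in C^\infty(V_2)$, and gluing $V_1,V_2,V_3$ yields $f\in C^\infty(M)$.

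The main obstacle is precisely this last verification: the radial function $\chi\circ\rho$ is not manifestly of the compactly supported type allowed by Definition \ref{smooth}.3, yet it must be shown to lie in $C^\infty(M)$. The resolution is the interplay between the locally constant behaviour of $f$ at $s$ (handled by $V_1$, where constants are automatically smooth) and the germ-defined/sheaf structure, which lets the regular-stratum contribution be assembled from the compactly supported functions of Definition \ref{smooth}.3. I expect no difficulty in the remaining bookkeeping, namely the use of compactness of $L_s$ to obtain the neighborhood basis $\{\Nn_s(\delta)\}$ and the disjointness of the neighborhoods $\Nn_{s_i}$, which guarantees that $f$ vanishes identically near every other singular point.
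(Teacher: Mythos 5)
Your proof is correct and takes essentially the same approach as the paper's: a radial cutoff $\chi\circ\rho_{cL}\circ\phi_s$ built in the singular chart, constancy near $s$ and outside the support, Definition \ref{smooth}.3 to put the transition region into $C^\infty(M)$, and the germ-defined property of Definition \ref{smooth}.1 to glue. The only difference is one of bookkeeping: the paper chooses the profile $\chi$ to vanish near $t=0$ as well, so that the transition function $\chi_M$ is itself a compactly supported smooth function on $M^{reg}$ (a single global application of axiom 3, followed by setting $f=1-\psi$), whereas your monotone profile has support containing $s$ and therefore needs the extra localization with the auxiliary bumps $\beta$ on $\Nn_s\setminus\{s\}$ --- slightly longer, but equally valid.
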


\begin{proof}  
Obviously, there exists $\varepsilon>0$ such that $\mathcal N_s(\varepsilon)\subset U$.  We will  construct the required function $f$ in several steps using a singular chart  $\phi_s : \Nn _s (\eps) \to cL(\eps)$.

In the first step we define an auxiliary smooth function $\chi \in C^\infty _0 ((0,\varepsilon))$. It is defined in the
following way.
\begin{gather}
\chi(a)=0\text{ for }a\in(0,\frac{1}{5}\varepsilon],\quad0<\chi(a)<1\text{ for }a\in(\frac{1}{5}\varepsilon,
\frac{2}{5}\varepsilon),\notag\\
\chi(a)=1\text{ for }a\in [\frac{2}{5}\varepsilon,\frac{3}{5}\varepsilon],\quad
0<\chi(a)<1\text{ for }a\in(\frac{3}{5}\varepsilon,\frac{4}{5}\varepsilon),\notag\\
\chi(a)=0\text{ for }a\in  [\frac{4}{5}\varepsilon,\varepsilon).\notag
\end{gather}

In the second step we define  a continuous function $\chi_M \in C^0 (M)$  by setting
$$  \chi_M (x) := \chi \circ \rho_{cL}(\phi_s (x)) \text { for } x \in \Nn_s(\eps),$$
$$\chi(x):=0\text{ for }x\in (M^{reg}\setminus \phi_s ^{-1}(L\times(0,\varepsilon)).$$
Note that $\chi_M$ is a smooth function on $M^{reg}$ with compact support, and consequently an element of $C^{\infty}(M)$. 

In the third step we define a new function $\psi\in C^0 (M)$. We set
\begin{gather}
\psi(x):=1\text{ for }x\in (M\setminus \phi_s ^{-1}(cL(\varepsilon))\text{ or  }x\in \phi_s ^{-1}((L\times (\frac{2}{5}\varepsilon,\varepsilon))\notag\\
\psi(x):=\chi_M(x)\text{ for }x\in \phi_s ^{-1} (L\times(0,\frac{2}{5}\varepsilon],\text{  and } \psi(s):=0.\notag
\end{gather}
Let us show  that on a neighborhood of any point $x\in M$ the function $\psi$ coincides with a function
from $C^{\infty}(M)$. If $x\in (M\setminus \phi_s ^{-1}(cL(\varepsilon))$ or $x\in \phi _s^{-1}((L\times (\frac{2}{5}\varepsilon,\varepsilon)))$, then on
a neighborhood of $x$ the function $\psi$ coincides with the constant function $1\in C^{\infty}(M)$. If $x\in
\phi_s ^{-1} (L\times(0,\frac{2}{5}\varepsilon])$, then on a neighborhood of $x$ the function $\psi$ coincides with the
function $\chi\in C^{\infty}(M)$. Finally on a neighborhood of the point $s$ the function $\psi$ coincides with a
constant function $0\in C^{\infty}(M)$. Consequently $\psi\in C^{\infty}(M)$, and then also $f=1-\psi\in
C^{\infty}(M)$. This function has all the required properties.
\end{proof}

\begin{lemma}\label{loc}
For every compact subset $K\subset M$ and every neighborhood $U$ of $K$ there exists a function $f\in C^{\infty}(M)$
such that
\begin{enumerate}
\item $f\geq 0$ on $M$;
\item $f>0$ on $K$;
\item $f=0$ outside $U$.
\end{enumerate}
\end{lemma}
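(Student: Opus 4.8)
The plan is to reduce the problem to two pieces that we already control: the singular points, handled by Lemma \ref{sing}, and the remaining compact part, which will lie entirely in the regular stratum and can be treated by the classical bump-function construction on the manifold $M^{reg}$ together with axiom 3 of Definition \ref{smooth}.

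First, since $S$ is finite, only finitely many singular points $s_1,\dots,s_k$ lie in $K$, and $U$ is a neighborhood of each of them. For each $s_i$ I would invoke Lemma \ref{sing} with the neighborhood $U$ to obtain $f_i\in C^\infty(M)$ with $0\le f_i\le 1$, $f_i(s_i)=1$, and $f_i=0$ outside $U$. Setting $V_i:=\{x\in M: f_i(x)>\tfrac12\}$ gives open neighborhoods of the $s_i$ on which $f_i$ is bounded below, and the set $K_0:=K\setminus\bigcup_{i=1}^k V_i$ is then a closed subset of the compact set $K$, hence compact, and contains no singular point (each $s_i\in V_i$, while points of $S\setminus K$ are not in $K$). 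Thus $K_0$ is a compact subset of the open manifold $M^{reg}$, and $K_0\subset K\subset U$, so $U\cap M^{reg}$ is an open neighborhood of $K_0$ in $M^{reg}$.

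Next I would apply the standard theory of smooth manifolds to $M^{reg}$: for the compact set $K_0$ and its open neighborhood $U\cap M^{reg}$ there is $g\in C^\infty_0 (M^{reg})$ with $g\ge 0$, $g>0$ on $K_0$, and $\supp g$ a compact subset of $U\cap M^{reg}$. By axiom 3 of Definition \ref{smooth} the extension by zero $j_*g$ lies in $C^\infty(M)$, and $j_*g=0$ outside $U$. The required function is then $f:=\sum_{i=1}^k f_i+j_*g$, which belongs to $C^\infty(M)$ because $C^\infty(M)$ is a subalgebra of $C^0(M)$ and so is closed under finite sums. Each summand is nonnegative, so $f\ge 0$; each summand vanishes outside $U$, so $f=0$ outside $U$. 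Finally, if $x\in K$ then either $x\in V_i$ for some $i$, whence $f(x)\ge f_i(x)>\tfrac12$, or else $x\in K_0$, whence $f(x)\ge j_*g(x)>0$; in either case $f(x)>0$. (If $K$ meets no singular point the first family is empty and $K_0=K$; if $K_0=\emptyset$ one may take $g=0$.)

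The one point that requires care, and the main obstacle, is guaranteeing that the regular-stratum bump function $g$ has support \emph{compact inside} $M^{reg}$, so that axiom 3 applies and the extension by zero is genuinely an element of $C^\infty(M)$. This is precisely what the preliminary use of Lemma \ref{sing} secures: by excising the open sets $V_i$ we separate the compact remainder $K_0$ from the singular set, so that $K_0$ sits in a compact subset of $U\cap M^{reg}$ bounded away from the $s_i$, and the classical construction then produces $g$ with support compact in $M^{reg}$ and contained in $U$.
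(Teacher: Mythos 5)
Your proof is correct and is in essence the paper's own argument: the paper likewise produces $f$ as a finite sum of nonnegative bump functions, covering $K$ by sets $W_x$ on which a Lemma \ref{sing}-type function exceeds $\tfrac{1}{2}$ (remarking parenthetically that Lemma \ref{sing} ``trivially holds'' at regular points) and extracting a finite subcover by compactness. Your only deviation is organizational — you excise the finitely many singular points of $K$ first and then handle the compact regular remainder $K_0$ with a single classical bump $g\in C^\infty_0(M^{reg})$ extended by zero — which has the mild virtue of making explicit the point the paper leaves implicit, namely that at regular points one needs $\supp g$ compact in $M^{reg}$ so that axiom 3 of Definition \ref{smooth} yields $j_*g\in C^\infty(M)$.
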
 
\begin{proof}
For each point $x\in K$ we take its open neighborhood $V_x$ in such a way that $V_x\subset V$, and we take a function
$f_x\in C^{\infty}(M)$ described in Lemma \ref{sing} (note that  Lemma \ref{sing} trivially holds for  any regular
point $x\in M^{reg}$). Finally, we take an open neighborhood
$W_x\subset V_x$ of $x$ such that ${f_x}|_{W_x}>\frac{1}{2}$. Because $K$ is compact, we can find a finite number of
$x_1,\dots,x_r$ in $K$ such that
$$
W_{x_1}\cup\dots\cup W_{x_r}\supset K.
$$
Now it is sufficient to set $f:=f_{x_1}+\dots+f_{x_r}$.
\end{proof}

\begin{lemma}\label{top}
Let $\{U_i\}_{i\in I}$ be a locally finite open covering of $M$. Then there exists a locally finite open covering
$\{V_i\}_{i\in I}$ (with the same index set) such that $\bar{V}_i\subset U_i$.
\end{lemma}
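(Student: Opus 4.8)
The plan is to recognize this statement as the classical \emph{shrinking lemma} for locally finite open covers, which holds in any normal topological space, so the essential work is to verify that $M$ is normal and then to run the standard inductive shrinking argument. Since $M$ is locally compact, Hausdorff and second-countable, it is regular, and by the Urysohn metrization theorem it is metrizable; in particular it is normal. (Equivalently, a second-countable locally compact Hausdorff space is $\sigma$-compact, hence paracompact, hence normal.) Second-countability also lets me simplify the index set: $M$ is Lindel\"of, so any locally finite family of nonempty open sets is countable. Thus at most countably many $U_i$ are nonempty; for the empty ones I simply take $V_i=\emptyset$, and I may well-order the remaining index set as a subset of $\N$, replacing transfinite recursion by ordinary induction.

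First I would well-order $I$ and construct the $V_i$ by induction, maintaining the invariant that at each stage $i$ the family $\{V_j : j<i\}\cup\{U_k : k\geq i\}$ still covers $M$. At stage $i$ I set
\[
A_i := M\setminus\Big(\bigcup_{j<i}V_j\ \cup\ \bigcup_{k>i}U_k\Big).
\]
This $A_i$ is closed (its complement is a union of open sets), and the invariant forces $A_i\subset U_i$: a point of $A_i$ lies in none of the $V_j$ ($j<i$) and in no $U_k$ ($k>i$), so by the covering property it must lie in $U_i$. By normality I may then choose an open $V_i$ with $A_i\subset V_i\subset\bar V_i\subset U_i$. Such a $V_i$ preserves the invariant, since any point not already covered by the $V_j$ ($j<i$) or the $U_k$ ($k>i$) lies in $A_i\subset V_i$.

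The main point, and the only place where local finiteness is genuinely used, is to check that the completed family $\{V_i\}_{i\in I}$ actually covers $M$. Given $x\in M$, local finiteness of $\{U_i\}$ provides a neighborhood of $x$ meeting only finitely many $U_i$; let $i_n$ be the largest index with $x\in U_{i_n}$. Applying the invariant at stage $i_n+1$, the point $x$ lies in some $V_j$ with $j\le i_n$ or in some $U_k$ with $k>i_n$; since $x$ belongs to no $U_k$ for $k>i_n$, it must lie in some $V_j$. Hence $\{V_i\}$ covers $M$. Finally, because $V_i\subset U_i$ for every $i$, local finiteness of $\{U_i\}$ immediately transfers to $\{V_i\}$, so $\{V_i\}$ is the desired locally finite open cover with $\bar V_i\subset U_i$. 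I expect the covering verification to be the only subtle step: without local finiteness the inductive construction can fail to cover $M$ in the limit, and it is precisely local finiteness that reduces the check at each point to the finite situation handled by the invariant.
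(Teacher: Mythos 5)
Your proof is correct, and it is precisely the standard shrinking-lemma argument that the paper invokes without writing out (its proof reads only ``The proof is standard''). Your verification that $M$ is normal (second-countable locally compact Hausdorff, hence metrizable) and your use of second countability to reduce to a countable index set, followed by the usual inductive shrinking with the covering invariant, is a complete and faithful filling-in of that standard proof.
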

\begin{proof}
The proof is standard.
\end{proof}
\begin{proposition}\label{fin}
Let $\{U_i\}_{i\in I}$ be a locally finite open covering of $M$ such that each $U_i$ has a compact closure
$\bar{U}_i$. Then there exists a partition of unity $\{f_i\}_{i\in I}$ subordinate to $\{U_i\}_{i\in I}$. 
\end{proposition}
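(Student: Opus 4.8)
The plan is to follow the classical normalization argument, using the shrinking Lemma \ref{top} to create room for the supports, Lemma \ref{loc} to manufacture nonnegative smooth bumps, and Lemma \ref{main1} to invert the resulting (nowhere vanishing) sum.

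First I would apply Lemma \ref{top} twice to the cover $\{U_i\}_{i\in I}$: this produces locally finite open coverings $\{V_i\}_{i\in I}$ and $\{W_i\}_{i\in I}$ (all with the same index set $I$) such that $\bar{W}_i\subset V_i$ and $\bar{V}_i\subset U_i$ for every $i$. Since $\bar{V}_i$ is a closed subset of the compact set $\bar{U}_i$, it is itself compact, and likewise $\bar{W}_i$ is compact. Thus for each $i$ I can invoke Lemma \ref{loc} with the compact set $K=\bar{W}_i$ and the neighborhood $V_i\supset\bar{W}_i$, obtaining a function $g_i\in C^\infty(M)$ with $g_i\geq 0$ on $M$, $g_i>0$ on $\bar{W}_i$, and $g_i=0$ outside $V_i$. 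In particular $\supp g_i\subset\bar{V}_i\subset U_i$, which already secures the subordination condition. The double shrinking is exactly what forces the support into the open set $U_i$ rather than merely into its closure $\bar{U}_i$.

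Next I set $g:=\sum_{i\in I}g_i$. Since $\{V_i\}$, and hence the family of sets $\{g_i\neq 0\}$, is locally finite, every point $x\in M$ has a neighborhood meeting only finitely many $V_i$; on such a neighborhood $g$ is a finite sum of elements of $C^\infty$ and is therefore smooth there. As $C^\infty(M)$ is germ-defined (Definition \ref{smooth}(1)), these local representatives patch to give $g\in C^\infty(M)$. Moreover $\{W_i\}$ is a covering of $M$, so each $x$ lies in some $W_i$, whence $g_i(x)>0$ and therefore $g(x)>0$; thus $g$ is nowhere vanishing. By Lemma \ref{main1} we get $1/g\in C^\infty(M)$, and since $C^\infty(M)$ is an algebra the functions $f_i:=g_i\cdot(1/g)$ lie in $C^\infty(M)$. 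They satisfy $f_i\geq 0$, $\supp f_i=\supp g_i\subset U_i$, the family $\{\supp f_i\}$ is locally finite, and $\sum_{i}f_i=g/g=1$. Hence $\{f_i\}_{i\in I}$ is the desired partition of unity subordinate to $\{U_i\}_{i\in I}$.

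The one delicate point is the passage from the pointwise sum $g=\sum g_i$ to membership in $C^\infty(M)$: it is precisely the local finiteness of the cover that reduces $g$ locally to a finite algebraic combination of smooth functions, so that the germ-defined sheaf property of Definition \ref{smooth}(1) applies. Everything else — the nonvanishing of $g$ and the final division — is handled mechanically by Lemma \ref{main1} and the ring structure of $C^\infty(M)$. I expect no essential obstacle beyond this bookkeeping, since the two required analytic inputs, namely the existence of supported bump functions (Lemma \ref{loc}) and partial invertibility (Lemma \ref{main1}), have already been established.
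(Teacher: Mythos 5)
Your proof is correct and takes essentially the same route as the paper: shrink the cover via Lemma \ref{top}, produce nonnegative bumps with Lemma \ref{loc}, use local finiteness together with the germ-defined property of Definition \ref{smooth}(1) to conclude $g=\sum_i g_i\in C^\infty(M)$, and normalize by inverting $g$ with Lemma \ref{main1}. The only cosmetic difference is that you obtain the intermediate cover by applying Lemma \ref{top} twice, whereas the paper applies it once and then interpolates a cover $\{W_i\}$ with $\bar{V}_i\subset W_i\subset\bar{W}_i\subset U_i$.
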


\begin{proof}
Let $\{V_i\}_{i\in I}$ be the same covering as in Lemma \ref{top}. Let $\{W_i\}_{i\in I}$ be an open covering such
that $\bar{V}_i\subset W_i\subset\bar{W}_i\subset U_i$. According to Lemma \ref{loc} for every $i\in I$ there exists
a function $g_i\in C^{\infty}(M)$ such that
\begin{enumerate}
\item $g_i\geq0$ on $M$;
\item $g_i>0$ on $\bar{V}_i$;
\item $g_i=0$ outside $W_i$.
\end{enumerate} 
Because $V_i\subset\supp g_i\subset U_i$ for every $i\in I$, the sum $g=\sum_{i\in I}g_i$ is well defined and
everywhere positive. Since our algebra $C^\infty (M)$ is  germ-defined,  $g$ belongs to $ C^{\infty}(M)$,  and according to the partial invertibility property in Lemma \ref{main1}  $1/g\in
C^{\infty}(M)$. Consequently, defining $f_i=g_i/g$, we obtain the desired partition of unity.
\end{proof}

\begin{corollary} \label{germ} Smooth functions on $M$ separate points on $M$.
\end{corollary}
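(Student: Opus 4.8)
The goal is to show that for any two distinct points $x, y \in M$, there exists a smooth function $f \in C^\infty(M)$ with $f(x) \neq f(y)$. The plan is to produce, for each point, a smooth ``bump'' function that is positive there and supported in a small neighborhood excluding the other point; this is precisely the kind of function already built in the preceding lemmas, so the work is mostly one of bookkeeping and case analysis depending on whether the points are regular or singular.

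First I would fix distinct points $x, y \in M$. Since $M$ is Hausdorff and locally compact, I can choose an open neighborhood $U$ of $x$ with compact closure such that $y \notin \bar U$. The key observation is that Lemma \ref{loc} (applied with the compact set $K = \{x\}$, which is trivially compact) yields a function $f \in C^\infty(M)$ satisfying $f \geq 0$ on $M$, $f(x) > 0$, and $f = 0$ outside $U$. Since $y \notin U$, we have $f(y) = 0 < f(x)$, so $f$ separates $x$ from $y$. This handles all cases uniformly, whether $x$ is regular or singular, because Lemma \ref{sing} (on which Lemma \ref{loc} depends) was explicitly noted to hold trivially for regular points and was established for singular points. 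The only point requiring the Hausdorff hypothesis is the initial separation of $x$ from $\bar U$, and local compactness guarantees the existence of a relatively compact neighborhood so that Lemma \ref{loc} applies.

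An alternative, even more elementary route bypasses compactness of the closure: by Definition \ref{smooth}(3) together with Remark \ref{rsmooth}, any compactly supported smooth function on $M^{reg}$ extends to an element of $C^\infty(M)$. If both $x$ and $y$ are regular, a standard bump function on the smooth manifold $M^{reg}$ separating them, chosen with compact support avoiding the singular set, does the job immediately. If $x$ is singular, I would instead invoke Lemma \ref{sing} directly: choosing a neighborhood $U$ of $x$ with $y \notin U$, Lemma \ref{sing} produces $f \in C^\infty(M)$ with $f(x) = 1$ and $f = 0$ outside $U$, so again $f(x) = 1 \neq 0 = f(y)$.

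I expect no genuine obstacle here, since all the analytic content has been front-loaded into Lemmas \ref{sing}, \ref{loc} and Proposition \ref{fin}; the corollary is essentially a corollary of the existence of bump functions. The only mild care needed is the case distinction and the verification that a neighborhood separating $x$ from $y$ exists, which is immediate from the Hausdorff and local compactness assumptions in the definition of a pseudomanifold w.i.c.s. Thus the heart of the argument is simply to apply Lemma \ref{loc} with $K=\{x\}$ and a small enough $U$ excluding $y$.
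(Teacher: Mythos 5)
Your proposal is correct and, in substance, identical to the paper's proof: the authors also separate $x_1$ from $x_2$ by choosing a neighborhood $\Nn_{x_2}(\eps)$ avoiding $x_1$ and applying the bump function of Lemma \ref{sing} (which, as noted in the proof of Lemma \ref{loc}, holds trivially at regular points), exactly as in your alternative route. Your first route via Lemma \ref{loc} with $K=\{x\}$ is only a cosmetic variant, since that lemma is itself a direct consequence of Lemma \ref{sing}.
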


\begin{proof} 
Let $x_1,x_2\in M$, $x_1\ne x_2$. 
We take an
$\eps$-neighborhood $\Nn_{x_2}(\eps)$ of $x_2$ such that $x_1\not\in \Nn_{x_2}(\eps)$. Then it suffices to take a function $f$ from Lemma \ref{sing} and
we have $f(x_1)=0$ and $f(x_2)=1$.
\end{proof}

 Next we would like to define a
notion of a locally smoothly contractible differentiable structure on $M$.
For this purpose we shall have to take a product $U(x)\times[0,1]$, where
$U(x)$ is an open neighborhood of $x\in M$, and endow it with a
differentiable structure. Though the product $U(x)\times[0,1]$ need not be
a pseudomanifold w.i.c.s., we can use the same
concept of a smooth structure as  Mostow  used \cite[\S 3]{Mostow1979}. 
We say that  $C^\infty(M)$ is {\it locally smoothly contractible},  if for  any  $x\in M$ there exists  an open neighborhood $U(x) \ni x$ together
with a smooth homotopy $\sigma : U (x) \times  [0,1] \to  U(x)$ joining the   identity map with the  constant map $ U(x) \mapsto x$ \cite[\S 5]{Mostow1979}.
Note that  there is a natural  smooth structure $C^\infty (U(x) \times [0,1])$   generated by $  C^\infty (U(x))$ and  $C^\infty ([0,1])$ \cite[\S 3]{Mostow1979}, more precisely, the sheaf $SC^\infty (U(x) \times [0,1])$ is generated
by $\pi^*_1(SC^\infty ([0,1]))$ and $\pi_2^* (SC^\infty (U(x))))$, where $\pi_1$ and $\pi_2$ is  the projection from
$U(x) \times [0,1]$  to $[0,1]$ and  $U(x)$ respectively. In particular, $\pi_1$ and $\pi_2$ are smooth maps.

Denote by $ C^\infty _{L\times \{ 0 \}}(L \times [0, \infty))$ the  subalgebra in $C^\infty ((L \times [0, \infty))$ consisting of functions taking constant
values  along $L\times \{ 0 \}$.  Clearly  $ C^\infty _{L\times \{ 0 \}}(L \times [0, \infty))$ is isomorphic (as $\R$-algebra) to  $C^\infty _w(cL)$.

\begin{lemma}\label{wcone} 1. A function $f(x,t)\in C^\infty (L \times [0, \infty))$  belongs to $ C^\infty _{L\times \{ 0 \}}(L \times [0, \infty))$  if and only if $f$ can be written as $f(x,t) = t \cdot g(x,t)  + c$, where $g \in C^\infty (L \times [0, \infty))$  and $c \in \R$.
 
2.  Let $C^\infty_e(cL)\subset C^\infty_w(cL)$ be  the subalgebra consisting of
all functions   $f$ on  which can be written as $f([x, t] ) = g ( tf_1 (x), \cdots, tf_k (x))$ for 
some $g \in C^\infty(\R ^k)$ and $f_i \in C^\infty (L)$. Then $C^\infty_e (cL)$ is  a locally smoothly contractible smooth structure on $cL$.

\end{lemma}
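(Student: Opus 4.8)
The plan is to treat the two parts separately: part~1 is a one-variable Hadamard argument, while part~2 requires verifying the three axioms of Definition \ref{smooth} together with local smooth contractibility, and this is where the work lies. For part~1 the implication $\Leftarrow$ is immediate, since $f(x,t)=t\,g(x,t)+c$ forces $f(x,0)=c$ for all $x$. For $\Rightarrow$ I would set $c:=f(x,0)$ (a constant by hypothesis), put $h:=f-c$ so that $h|_{L\times\{0\}}=0$, and define
\[
g(x,t):=\int_0^1 \frac{\partial h}{\partial t}(x,st)\,ds .
\]
Differentiation under the integral sign gives $g\in C^\infty(L\times[0,\infty))$, and the chain rule together with the fundamental theorem of calculus yields $t\,g(x,t)=h(x,t)-h(x,0)=h(x,t)$, so $f=t\,g+c$.

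For part~2 I would first dispatch the formal algebra: concatenating the generating tuples shows that $C^\infty_e(cL)$ is closed under sums, products, and postcomposition with any $G\in C^\infty(\R^n)$, so it is a $C^\infty$-ring, and since each $t f_i(x)$ is genuinely smooth on $L\times(0,\infty)$ while $g$ is smooth, axiom~2 of Definition \ref{smooth} is clear. The geometric engine for the remaining axioms is a fixed embedding $u=(u_1,\dots,u_N):L\hookrightarrow S^{N-1}\subset\R^N$ with $\sum_i u_i^2\equiv1$ (Whitney embedding followed by inverse stereographic projection), which induces $\Phi:cL\to\R^N$, $\Phi([x,t])=(tu_1(x),\dots,tu_N(x))$, sending the vertex to $0$, satisfying $\|\Phi([x,t])\|=t$, and restricting to a diffeomorphism of $L\times(0,\infty)$ onto the closed embedded submanifold $C_L:=\Phi(L\times(0,\infty))$ of $\R^N\setminus\{0\}$.

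With this in hand, axiom~3 follows by taking $\varphi\in C^\infty_0(L\times(0,\infty))$ supported in an annulus $L\times[a,b]$, transporting it to $\varphi\circ\Phi^{-1}\in C^\infty(C_L)$ supported in $\{a\le\|y\|\le b\}$, extending it off $C_L$ (tubular neighborhood) and by zero near $0$ to some $G\in C^\infty(\R^N)$, whence $j_*\varphi=G(tu_1,\dots,tu_N)\in C^\infty_e(cL)$. For axiom~1 I would define the sheaf $SC^\infty_e$ by letting $C^\infty_e(U)$ be the functions that are locally of Euclidean form, so that gluing is automatic and the content reduces to showing that an everywhere-locally-Euclidean global function $f$ is globally Euclidean. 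Here I would write $f=\chi f+(1-\chi)f$ with a radial cutoff $\chi(t)=\beta(t^2)=\beta\big(\sum_i(tu_i)^2\big)\in C^\infty_e(cL)$ equal to $1$ near the vertex and supported where $f$ has a local representation $g_0(th_1,\dots,th_m)$: the first summand is $\big(\beta(\sum y_i^2)g_0(z)\big)(tu_1,\dots,tu_N,th_1,\dots,th_m)$, while $(1-\chi)f$ vanishes near the vertex and is put into Euclidean form exactly as in axiom~3. Finally, local smooth contractibility is witnessed at the vertex by the radial homotopy $\sigma([x,t],\tau)=[x,(1-\tau)t]$ on $cL(\eps)$ (ordinary contractibility handles regular points); smoothness reduces to $\sigma^*\big(g(tf_1,\dots,tf_k)\big)=g\big((1-\tau)tf_1(x),\dots,(1-\tau)tf_k(x)\big)$, each factor $(1-\tau)\cdot tf_i(x)$ lying in the product $C^\infty$-ring generated by $C^\infty(U(x))$ and $C^\infty([0,1])$.

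I expect the main obstacle to be the ``locally Euclidean $\Rightarrow$ globally Euclidean'' step of axiom~1, since it is the only place forcing one to manufacture a single global pair $(g,(f_i))$ rather than recombine given data; its technical core is the smooth extension of functions off the embedded cone $C_L$, where the facts that $C_L$ is a closed submanifold of $\R^N\setminus\{0\}$ and that $\|\Phi\|=t$ (turning $t$-cutoffs into bona fide Euclidean functions) are what make the splitting $f=\chi f+(1-\chi)f$ work.
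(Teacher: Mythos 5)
Your proposal is correct, and on the parts the paper actually writes out it coincides with the paper's proof: part~1 is the same Hadamard-type computation, $f(x,t)=f(x,0)+t\int_0^1 \frac{df(x,tr)}{d(tr)}\,dr$, and your contractibility argument is exactly the paper's, namely the radial homotopy (the paper uses $F([x,t],\lambda)=[x,\lambda t]$, your $\lambda=1-\tau$) together with the regrouping $g(\lambda t f_1(x),\dots,\lambda t f_k(x))=G(\lambda,\,tf_1(x),\dots,tf_k(x))$, which places the pullback in the product $C^\infty$-ring generated by $C^\infty_e(cL(1))$ and $C^\infty([0,1])$. Where you differ is that the paper leaves the smooth-structure axioms essentially unproved: it calls the germ-defined $C^\infty$-ring property ``easy to see'' and derives the $j_*$-axiom by citing Remark~\ref{exotic}.4 (every Euclidean smooth structure, generated by the functions $tf_i$, sits inside $C^\infty_e(cL)$). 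Your verification supplies the actual content behind these assertions: the embedding $u:L\to S^{N-1}$ with $\sum_i u_i^2\equiv 1$, which makes $t^2=\sum_i(tu_i)^2$ and hence radial cutoffs $\beta(t^2)$ bona fide elements of $C^\infty_e(cL)$; the extension of smooth functions off the closed embedded cone $C_L\subset\R^N\setminus\{0\}$, which is precisely the mechanism implicit in the paper's appeal to Euclidean structures for the $j_*$-axiom; and the splitting $f=\chi f+(1-\chi)f$ for the locally-Euclidean-implies-globally-Euclidean step, which the paper never addresses but which is needed for $C^\infty_e(cL)$ to be the ring of global sections of a sheaf. So your write-up is a completion of the paper's sketch rather than a deviation from it, and it correctly identifies the one genuinely nontrivial point. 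A minor remark: in the $(1-\chi)f$ step you invoke the axiom-3 construction, which you set up for compactly supported functions on an annulus $L\times[a,b]$; here the support may be unbounded, but your argument survives unchanged because the tubular-neighborhood extension only uses that $C_L$ is closed in $\R^N\setminus\{0\}$ and that the support stays at distance $\geq a$ from the origin (so the extension can be taken supported in $\{\|y\|\geq a/2\}$ and extended by zero across $0$); it may be worth saying this explicitly.
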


\begin{proof} 1) The ``if" assertion  in the  first statement is obvious. Let us prove the ``only if" assertion.
For any  $f \in  C^\infty (L \times [0,\infty))$ we have
$$  f (x, t) = f (x, 0)  + \int _0 ^ 1 {d f (x, t r) \over d r }\, dr  = f(x, 0) + t \int _0 ^ 1 { d f ( x, t r)\over d  (tr)}\, dr .$$
Clearly $\int _0 ^ 1 { d f ( x, t r)\over d (tr)}\, dr  \in C^\infty  (L \times [0, \infty))$. 
This proves the first statement.  

2) It is easy  to see that $C^\infty_e (cL)$ satisfies the first condition in Definition \ref{smooth}.  We observe that 
$C^\infty _e (cL)$ also satisfies the second  condition of Definition  \ref{smooth}, i.e. $j_* (f) \in C^\infty _e (cL)$
for any $f \in C^\infty _0 (cL)$, since this assertion is a consequence of Remark \ref{exotic}.4 below.
To prove the second statement  of Lemma \ref{wcone} it suffices to show that    the map
$$ F :  cL(1) \times [0,1] \to  cL , \: ([x,t], \lambda) \mapsto    [ x, \lambda t] $$
is a smooth  map.  Equivalently   we have to show that any function  $F^* ( f)$, $ f\in C^\infty _e( cL(1))$,  belongs to the germ-defined $C^\infty$-ring $ C^\infty ( cL(1) \times [0,1])$  generated by $C^\infty_e (cL(1))$ and $C^\infty ([0,1])$.
Repeating the previous argument, we can write $f([x,t]) = g (t f_1(x), \cdots, tf_k (x))$, where   $ f_i\in C^\infty (L )$  and $g \in C^\infty (\R^k)$.  Clearly $(F^*( f))([x,t], \lambda) =  g ( \lambda tf_1(x), \cdots, \lambda t f_k (x))$  can be written as a function $G (\lambda,  tf_1(x), \cdots ,tf_k (x))$, hence it belongs to  $C^\infty (cL(1)  \times [0,1])$.  
\end{proof}

Now we show a  geometric way  to construct   a nice locally smoothly contractible smooth structure   on a conical pseudomanifold $M$.

\begin{definition} {\it A Euclidean  smooth structure}  on  a  pseudomanifold w.i.c.s. $M$ is  defined by a smooth embedding $I_s : L_s \to S^l(1)\subset \R^{l+1}$  and a  trivialization $\phi_s : \Nn_s \to cL_s$ for each  $s\in  S_M$ as  follows.
Let $\hat I_s$ denote the  induced embedding of $cL_s \to \R^{l+1}$.  A continuous function
on $M$ is called {\it smooth}, if   it is smooth on $M^{reg}$ and its restriction  to $\Nn_s$ is a  pull back of a smooth function on $\R^{l+1}$ via  $\hat I_s \circ \phi_s$  for all $s$. 
\end{definition}

By composing an  embedding $I_s$ with an isometric embedding $g_{l,l+k}:S^l (1) \to S^{ l +k}(1)\subset \R^{l+k+1}$  we get another embedding $ I_{s, +k}: L_{s} \to S^{l +k}(1)$.  Denote by $\hat I_{s, +k}$ the induced 
embedding $cL_s \to \R^{l+k +1}$.  It is easy to see  that  the smooth structures  defined by $I_s$  and $I_{s, +k}$ are equivalent. This motivates us to give  the following concept.

Two smooth  embeddings $I_s ^1: L_s \to S^{k_1}(1) \subset \R^{k_1+1} $ and $I_s^2 : L_s \to S^{k_2}(1)\subset \R^{k_2 +1}$ are called {\it Euclidean equivalent}, if there exists  a diffeomorphism $\Theta: \R^{k_1+ k_2 + 2}\to \R^{k_1 + k_2 +2}$   such that  $\Theta \circ\hat  I_{s, + k_2 + 1} ^ 1  =
\hat I_{s, +k_1 +1}$.  
Two  Euclidean smooth structures are  called {\it Euclidean equivalent}, if the corresponding embeddings  $I_s$ are  Euclidean equivalent.
The embedding  $\hat I_s \circ \phi_s$ is called  {\it a smooth chart} around  singular  point $s \in S_M$.

Clearly the   canonical smooth structure  on $\R^n$  is a Euclidean   smooth structure.

\begin{remark}\label{exotic} 1. A  pseudomanifold w.i.c.s. may  have more than one Euclidean smooth structure.   For example,   conical pseudomanifolds
$cL(z={1\over 2})$  and $ cL (z=0)$  are not isomorphic, where $L(z=\theta)$ is the  circle  in  $S^2 (1) \subset \R^3(x, y, z)$ defined by the equation $ z = \theta \in  (-1, 1)$.   This is proved  by observing that the function $f(x,y,z) = (x^2 + y ^ 2 + z^2) ^{1/2}$ is smooth  on  $cL(z={1\over 2})$ but it is not smooth on $cL(z=0)$. Using the diffeomorphism  $T_\alpha : \R^ 3\to  \R^3, \, z\mapsto \alpha z, \alpha \not = 0$, we conclude that all $cL(z=\alpha)$ are diffeomorphic, if  $0 < |\alpha| <1$. Note that the ``smallest" smooth structure on $cS^1 $  is the isolated smooth structure $cL(z=0)$.

2.  Clearly any Euclidean smooth structure is  locally smoothly contractible, since   the  homotopy
$cL \times [0,1] \to  cL: ([x,t], \lambda) \mapsto  [x, \lambda .t]$ is a smooth map.

3. In the next section, see Proposition \ref{unend},  we  will show that  for any fixed $L$ there are infinitely many non-equivalent  Euclidean structures  on $cL$.  

4. Suppose that $L$ is compact and $C^\infty (cL)$ is a Euclidean smooth structure on  $cL$. Let $I_s : L \to \R^k$ is defined by  $k$ smooth functions $f_i \in C^\infty (L), i = \overline{1, k}.$ Then $\tilde  f_i(t, x) : = tf_i (x)$
are generators of  the associated Euclidean smooth structure
$C^\infty (cL)$.  Thus
$C^\infty (cL)$ is a subalgebra of the algebra $C^\infty _e (cL)$. 
\end{remark}

We say that  $C^\infty(M)$ is {\it finitely generated}, if there is a finite number of
functions $f_1, \cdots, f_k\in C^\infty(M)$ such that  any $g\in C^\infty(M)$ is of  form  $g : = \hat g (f_1, \cdots, f_k)$
for some $\hat g \in C^\infty (\R^k)$. Functions $f_1, \cdots, f_k$ are called  {\it generators} of $C^\infty (M)$. Remark \ref{exotic}.4 asserts that a Euclidean smooth structure is   finitely generated.

\begin{proposition}\label{pflaum}  Suppose that $M$ and $N$ are pseudomanifolds w.i.c.s. provided  with a finitely generated smooth structure. A continuous map $\sigma: M \to N$ is smooth, if and only
if  for each $x\in M$  there exist a  smooth chart $\phi_x :U(x) \to \R^n$, a smooth chart $\phi_{\sigma (x)}: U(\sigma(x)) \to \R^m$,  and a smooth map $\tilde \sigma : \R^n \to \R^m$ such that $\phi_{\sigma (x) } \circ \sigma =
\tilde \sigma \circ \phi_x$.  Consequently,  two Euclidean smooth structures  on a pseudomanifold w.i.c.s. $M$   are Euclidean equivalent, if and only if  they are equivalent.
\end{proposition}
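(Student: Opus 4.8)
The plan is to establish the biconditional by translating smoothness of $\sigma$ into a factorization property of generators, and then to deduce the Euclidean statement by applying the biconditional to the identity map. For a finitely generated smooth structure with generators $f_1,\dots,f_k$ I use as smooth chart the map $(f_1,\dots,f_k)$ into $\R^k$ (or its restriction to an open set): the defining property of finite generation is precisely that every $g\in C^\infty$ factors as $g=\hat g(f_1,\dots,f_k)$ with $\hat g\in C^\infty(\R^k)$, and by Corollary \ref{germ} these components separate points, which justifies the word chart. For a Euclidean smooth structure the singular chart $\hat I_s\circ\phi_s$ is of this form, its components being the functions $tf_i(z)$ of Remark \ref{exotic}.4; at a regular point one takes an ordinary manifold chart. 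In every case the only feature used below is the factorization together with the $C^\infty$-ring axiom.

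For ($\Leftarrow$), suppose $\phi_{\sigma(x)}\circ\sigma=\tilde\sigma\circ\phi_x$ near each $x$. Given $g\in C^\infty(N)$, write $g=\hat g\circ\phi_{\sigma(x)}$ on $U(\sigma(x))$ with $\hat g\in C^\infty(\R^m)$; then on $U(x)$ one has $\sigma^*(g)=(\hat g\circ\tilde\sigma)\circ\phi_x$. Since $\hat g\circ\tilde\sigma\in C^\infty(\R^n)$ and the components of $\phi_x$ lie in $C^\infty(U(x))$, the $C^\infty$-ring axiom (Definition \ref{smooth}.1) gives $\sigma^*(g)\in C^\infty(U(x))$; germ-definedness then upgrades this local smoothness to $\sigma^*(g)\in C^\infty(M)$, so $\sigma$ is smooth. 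For ($\Rightarrow$), suppose $\sigma$ is smooth and let $\phi_{\sigma(x)}=(h_1,\dots,h_m)$. Then $\sigma^*(h_j)\in C^\infty(M)$, and finite generation yields $\hat\sigma_j\in C^\infty(\R^k)$ with $\sigma^*(h_j)=\hat\sigma_j(f_1,\dots,f_k)$; setting $\phi_x=(f_1,\dots,f_k)$ and $\tilde\sigma=(\hat\sigma_1,\dots,\hat\sigma_m)$ gives $\phi_{\sigma(x)}\circ\sigma=\tilde\sigma\circ\phi_x$ by a pointwise check. Note that $\tilde\sigma$ is genuinely smooth on all of $\R^k$, which will matter below.

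For the final assertion I apply the biconditional to the identity on $M$ carrying two Euclidean structures $C^\infty_1,C^\infty_2$, where being \emph{equivalent} means the identity is smooth in both directions. On $M^{reg}$ both structures are the ordinary one, so only the singular charts $\hat I^1_s\circ\phi_s$ and $\hat I^2_s\circ\phi_s$ are at issue. If the embeddings are Euclidean equivalent through $\Theta$, then (using that stabilizing $I_s$ leaves the structure unchanged) the compositions $\pi_2\circ\Theta\circ\iota_1$ and $\pi_1\circ\Theta^{-1}\circ\iota_2$ supply the representing maps $\tilde\sigma$ in both directions, so the identity is a diffeomorphism. Conversely, if the identity is a diffeomorphism, the biconditional produces smooth $\tilde\sigma_{12}:\R^{k_1+1}\to\R^{k_2+1}$ and $\tilde\sigma_{21}:\R^{k_2+1}\to\R^{k_1+1}$ with $\tilde\sigma_{12}(\hat I^1_s(p))=\hat I^2_s(p)$ and $\tilde\sigma_{21}(\hat I^2_s(p))=\hat I^1_s(p)$ for all $p\in cL_s$. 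The main obstacle is that these maps invert one another only along the embedded cones, not as diffeomorphisms of Euclidean space. I upgrade them by stabilizing to $\R^{k_1+k_2+2}=\R^{k_1+1}\times\R^{k_2+1}$ and composing the two shears $\Theta_a(u,v)=(u,v+\tilde\sigma_{12}(u))$ and $\Theta_b(u,v)=(u-\tilde\sigma_{21}(v),v)$. Each is a diffeomorphism with the evident inverse, and $\Theta=\Theta_b\circ\Theta_a$ sends $(\hat I^1_s(p),0)$ to $(\hat I^1_s(p),\hat I^2_s(p))$ and then to $(0,\hat I^2_s(p))$, i.e. $\Theta\circ\hat I^1_{s,+k_2+1}=\hat I^2_{s,+k_1+1}$, which is exactly the Euclidean equivalence.

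In short, the two implications of the biconditional are routine bookkeeping with finite generation, the $C^\infty$-ring axiom, and germ-definedness, while the substantive point is the double-shear assembly of the ambient diffeomorphism $\Theta$. One subtlety deserves care: if the two Euclidean structures carry different trivializations $\phi^1_s,\phi^2_s$, the biconditional compares $\hat I^1_s$ with $\hat I^2_s$ precomposed by the cone diffeomorphism $\phi^2_s\circ(\phi^1_s)^{-1}$, and this reparametrization must be absorbed (for instance by fixing a common trivialization) before the shear construction applies.
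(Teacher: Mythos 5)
Your proof of the chart biconditional is correct and essentially the paper's own: the ``if'' direction via the $C^\infty$-ring axiom plus germ-definedness, the ``only if'' direction by factoring the pulled-back coordinate functions $\sigma^*(h_j)$ through the generators and assembling $\tilde\sigma=(\hat\sigma_1,\dots,\hat\sigma_m)$ (the paper cites Pflaum, Proposition 1.3.8, and sketches exactly this argument). Your double-shear construction of $\Theta=\Theta_b\circ\Theta_a$ on $\R^{k_1+1}\times\R^{k_2+1}$ is also valid as far as it goes, and is in fact more constructive than anything in the paper: it genuinely manufactures the ambient diffeomorphism demanded by the definition of Euclidean equivalence out of the two one-sided smooth factorizations $\tilde\sigma_{12},\tilde\sigma_{21}$, which only invert each other along the embedded cones.

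The genuine gap is in the second assertion, and it is precisely the point you defer in your last paragraph. The paper defines ``equivalent'' as the existence of a homeomorphism $\sigma:M\to M$ with $\sigma^*(C^\infty_1(M))=C^\infty_2(M)$, not as smoothness of the identity in both directions; these differ, since pulling back a Euclidean structure by a nontrivial homeomorphism gives an equivalent but unequal structure. Concretely, two Euclidean structures with the same embeddings $I_s$ but different trivializations $\phi^1_s,\phi^2_s$ are Euclidean equivalent by definition, yet the identity need not be smooth between them — so your claim ``the identity is a diffeomorphism'' fails in that direction, and the mediating $\sigma$, equal to $(\phi^2_s)^{-1}\circ\phi^1_s$ near each $s$ and extended over $M$, must be built globally; this is what the paper's appeal to a smooth partition of unity and finiteness of $S_M$ is for. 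In the converse direction, running your biconditional with $\sigma$ rather than the identity yields $\tilde\sigma_{12}\circ\hat I^1_s=\hat I^2_s\circ q$ with a residual cone reparametrization $q=\phi^2_s\circ\sigma\circ(\phi^1_s)^{-1}$, so the shear produces $\Theta\circ\hat I^1_{s,+k_2+1}=\hat I^2_{s,+k_1+1}\circ q$ instead of the required identity; ``fixing a common trivialization'' is not available, because the trivializations are part of the given data. The paper removes $q$ by a re-presentation observation your proposal lacks: if $\sigma^*(C^\infty_1(M))=C^\infty_2(M)$, then the pairs consisting of the \emph{same} embeddings $I^1_{s_i}$ with the trivializations $\phi_{s_i}\circ\sigma$ present $C^\infty_2(M)$, whence Euclidean equivalence is immediate. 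Supplementing your shear argument with this observation (and with the global construction of $\sigma$ in the other direction) would close the gap.
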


\begin{proof} 1) The first assertion  of Proposition \ref{pflaum}  is a special case of Proposition 1.3.8 in \cite{Pflaum2000}, see also \cite[Proposition 1.5]{MR1991} for an equivalent formulation. 
 For the convenience of the reader we  give a proof of  this assertion, which is similar to the  proof in  the case of smooth manifolds. The ``if"  part is  clear, so we  will prove  the ``only"  part.
Let $y_1, \cdots, y_m$  be coordinate functions  on $\R^m$. By our assumption, $y_k(\phi_{\sigma(x)}\circ\sigma)  $ is a smooth function
on $U(x)$, hence there  exist  smooth functions  $f_k $ on $\R^n$ such that  $f_k (\phi_x) =  y(\phi_{\sigma_x}\circ \sigma )$ for $k = 1,m$.
Now we define  a  smooth map $\tilde \sigma : \R^n  \to \R^m$ by setting
$$ \tilde \sigma  (x) = (f_1 (x), \cdots, f_k (x)).$$
Clearly $\tilde \sigma $ satisfies the condition  of our Proposition \ref{pflaum}.1.


2) Let us  prove the  ``only if" part of   second assertion  of Proposition \ref{pflaum}. Assume that  two Euclidean smooth structures $C^\infty_1 (M)$ and $C^\infty _2 (M)$ are Euclidean equivalent. Using the existence of a smooth partition of unity (Lemma \ref{fin})   and the finiteness of $S_M$, it is easy to see that   $C^\infty_1(M)$ and $C^\infty_2 (M)$ are  equivalent, i.e.   there
exists a  homeomorphism  $ \sigma : M \to M$ such that $\sigma ^*( C^\infty _1 (M)) = C^\infty _2 (M)$. 

Now we  will prove the ``if" part of the second assertion, i.e. we  assume that there
exists  a homeomorphism $\sigma : M \to M$ such that $\sigma ^* (C^\infty _1 (M)) = C^\infty _2 (M)$.
Let $\{(I_i: L_{s_i} \to S^{l_i} \subset \R ^{l_i +1}, \phi_{s_i} : \Nn_{s_i} \to cL_{s_i})|\, s_i \in S_M\}$  be embeddings   defining $C^\infty _1(M)$. Then   $\{( I_{s_i}, \sigma \circ \phi_{s_i})|\, s_i \in S_M\}$   are  embeddings defining  $C^\infty _2(M)$.  
This proves that $C^\infty_1 (M)$ and $C^\infty _2 (M)$ are Euclidean equivalent.
\end{proof}
 
Next we introduce  the notion of the cotangent bundle  of a  stratified  space $X$, which is  similar to
the notions introduced in \cite{SL1991}, \cite[B.1]{Pflaum2000}.
Note that the germs of smooth functions
$C^\infty _x (X)$ is  a local $\R$-algebra  with  the unique maximal ideal $\sm _x$ consisting of functions vanishing 
at $x$. Set $T^*_x (X) : = \sm_x / \sm_x ^ 2$.  
Since  the following exact sequence 
\begin{equation}
0 \to \sm_x \to C^\infty _x \stackrel{j}{\to} \R \to 0
\end{equation}
split, where $j$ is the evaluation map: $j(f_x) = f_x (x)$ for any $f_x \in C^\infty  _x$, the space $T_x^*X$ can be identified with the space of K\"ahler differentials of $C^\infty _x(X)$. The K\"ahler  derivation $d : C^\infty _x (X) \to T^*_x X$ is defined as follows:
\begin{equation}
d (f_x) = (f_x -  j^{-1} (f_x (x))  +\sm_x^2,
\end{equation}
where $j^{-1}: \R \to  C^\infty_x $ is the left inverse of $j$, see  e.g. \cite[Chapter 10]{Matsumura1980}, or \cite[Proposition B.1.2] {Pflaum2000}.    We  call $T^*_x X$  {\it  the cotangent space} of
$X$ at $x$. Its dual space
$ T^Z_x X: = Hom ( T^*_xX, \R)$ is called {\it the Zariski tangent  space} of $X$ at $x$.
The union $T^*X : =\cup _{x \in X} T^* _x X$ is called {\it the cotangent bundle} of $X$.  The union $\cup _{x \in X} T^Z _x X$ is called {\it the   Zariski tangent bundle} of $X$. 

Let us denote  by $\Om^1_x (X)$ the $C^\infty _x (X)$-module   $C^\infty_x (X)\otimes_\R\sm_x/\sm_x^2$. We called  $\Om^1_x (X)$
{\it the germs of 1-forms at $x$}.  
Set $\Om^k _x (X):= C^\infty _x(X) \otimes _\R \Lambda ^k(\sm_x/\sm_x^2)$.  Then $\oplus _k\Om ^k_x  (X)$ is an exterior algebra with the following
wedge  product
\begin{equation}
(f\otimes _\R dg_1 \wedge \cdots \wedge dg_k)\wedge (f'\otimes_\R dg_{k +1}\wedge \cdots \wedge dg_l) = (f\cdot f')\otimes _\R dg_1 \wedge \cdots \wedge dg_l,
\end{equation}
where $f, f ' \in C^\infty _x$ and $d g_i \in T_x ^*M$.  

Note that the K\"ahler derivation $d : C^\infty_x (X) := \Om ^ 0 _x (X) \to \Om ^1_x (X)$ extends to the unique derivation $d: \Om ^k_x (X) \to \Om ^{k+1} _x (X)$  satisfying the Leibniz property. Namely we set
\begin{eqnarray}
d(f\otimes 1) = 1 \otimes df,\nonumber\\
d((f\otimes \alpha )\wedge  (g \otimes \beta)) = d(f\otimes \alpha) \wedge g\otimes \beta + (-1) ^{deg\, \alpha} f\otimes \alpha \wedge  d(g \otimes \beta).\nonumber
\end{eqnarray}


\begin{definition} \label{smoothf} (cf. \cite[\S 2]{Mostow1979}) A section $\alpha :X \to \Lambda ^k T^*(X)$  is  called {\it a smooth differential $k$-form}, if 	for each $x \in X$ there exists
$U(x) \subset X$ such that $\alpha (x)$ can be represented as $\sum _{i_0i_1\cdots i_k} f_{i_0}df_{i_1}\wedge
\cdots \wedge df_{i_k}$ for  some $f_{i_0}, \cdots ,f_{i_k} \in  C^\infty (X)$.
\end{definition}

Denote  by $\Om(X)= \oplus _k \Om^k(X)$ the space of all smooth differential forms   on $X$. We identify  the germ at $x$
  of 
a $k$-form $\sum _{i_0i_1\cdots i_k} f_{i_0}df_{i_1}\wedge
\cdots \wedge df_{i_k}$   with  element $\sum _{i_0i_1\cdots i_k} f_{i_0}\otimes df_{i_1}\wedge
\cdots \wedge df_{i_k} \in \Om ^k_x(X)$.
Clearly the K\"ahler derivation $d$  extends to a map also denoted by $d$  mapping  $\Om(X)$ to $\Om(X)$.
 
\begin{remark}\label{injf} Let $i^*(\Om (X))$ be the restriction
 of $\Om (X)$ to $X^{reg}$.  By Remark \ref{rsmooth}  the kernel $i^* :\Om (X)\to \Om (X^{reg})$ is zero.  Roughly speaking, we can regard $\Om (X)$ as a subspace  in $\Om (X^{reg})$. 
\end{remark}

\begin{lemma}\label{nsmf}   Let $f : M \to N$ be a smooth map between   pseudomanifolds  w.i.c.s.. Then
there is a natural map $ f^* : T^* N \to T^*M$  such that $f^* (\alpha)$ is a smooth, if $\alpha $ is smooth.
\end{lemma}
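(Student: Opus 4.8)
The plan is to build $f^{*}$ first on germs of functions, then on cotangent spaces by passing to the quotient $\sm_x/\sm_x^{2}$, and finally on forms, where the smoothness-preservation claim becomes almost formal. The one point that needs care is that smoothness of $f$ is defined (Definition \ref{map}) only through \emph{global} functions $g\in C^\infty(N)$, whereas the cotangent space $T^{*}_y N=\sm_y/\sm_y^{2}$ is built from \emph{germs}; so the first task is to reconcile these two viewpoints.

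First I would fix $x\in M$ and put $y:=f(x)$, and show that pullback induces a local $\R$-algebra homomorphism $f^{*}_x:C^\infty_y(N)\to C^\infty_x(M)$. To define it on a germ $\gamma\in C^\infty_y(N)$, choose a representative $g\in C^\infty(V)$ on some open $V\ni y$; by Lemma \ref{sing} (which, as noted in the proof of Lemma \ref{loc}, holds equally at regular points, and whose output is $\equiv 1$ on a whole neighbourhood of the chosen point) there is $\phi\in C^\infty(N)$ with $\phi\equiv 1$ near $y$ and $\supp\phi\subset V$, so that $\phi g$, extended by zero, is a \emph{global} element of $C^\infty(N)$ with the same germ at $y$ as $g$. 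Since $f$ is smooth, $(\phi g)\circ f\in C^\infty(M)$, and I set $f^{*}_x(\gamma)$ to be its germ at $x$. This is well defined: if two global smooth functions agree near $y$, their pullbacks agree on the $f$-preimage of that neighbourhood, hence have the same germ at $x$ by continuity of $f$. Clearly $f^{*}_x$ is an $\R$-algebra homomorphism, and it is local, since $g(y)=0$ forces $(g\circ f)(x)=g(f(x))=0$; thus $f^{*}_x(\sm_y)\subset\sm_x$ and $f^{*}_x(\sm_y^{2})\subset\sm_x^{2}$. Passing to quotients yields a linear map $f^{*}_x:T^{*}_y N\to T^{*}_x M$, and the family $\{f^{*}_x\}_{x\in M}$ is the natural map $f^{*}:T^{*}N\to T^{*}M$ of the lemma; functoriality $(g\circ f)^{*}=f^{*}\circ g^{*}$ and $\mathrm{id}^{*}=\mathrm{id}$ follow at once. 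A short computation with the formula $d(g_y)=(g_y-j^{-1}(g_y(y)))+\sm_y^{2}$ and the identity $(g\circ f)(x)=g(y)$ shows the compatibility $f^{*}_x\circ d=d\circ f^{*}_x$.

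Finally I would extend $f^{*}$ multiplicatively, setting $(f^{*}\alpha)(x):=\Lambda^{k}f^{*}_x(\alpha(f(x)))$ on $\Lambda^{k}T^{*}$ and $f^{*}(h\otimes\omega):=(h\circ f)\otimes f^{*}\omega$ on $\Om^{k}$. The smoothness claim is then immediate from Definition \ref{smoothf}: given a smooth $\alpha$ on $N$ and $x\in M$, pick a neighbourhood $U(y)$ of $y=f(x)$ on which $\alpha=\sum g_{i_0}\,dg_{i_1}\wedge\cdots\wedge dg_{i_k}$ with global $g_{i_j}\in C^\infty(N)$; then on the neighbourhood $f^{-1}(U(y))$ of $x$, using $f^{*}d=df^{*}$,
\[
f^{*}\alpha \;=\; \sum (g_{i_0}\circ f)\, d(g_{i_1}\circ f)\wedge\cdots\wedge d(g_{i_k}\circ f),
\]
and each $g_{i_j}\circ f=f^{*}(g_{i_j})\in C^\infty(M)$ because $f$ is smooth. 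This is exactly a local representation of the kind demanded by Definition \ref{smoothf}, so $f^{*}\alpha$ is smooth. I expect the main obstacle to be precisely the germ-globalization of the first step: one cannot naively compose a locally defined smooth function on $N$ with $f$ and conclude smoothness on $M$, since the defining condition for a smooth map only tests global functions on $N$. The bump-function argument, resting on the partition-of-unity results (Lemma \ref{sing}, Proposition \ref{fin}), is what bridges this gap; everything afterward is the formal functoriality of K\"ahler differentials.
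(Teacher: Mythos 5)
Your proof is correct and takes essentially the same route as the paper's: pull back germs of smooth functions along $f$, note that the maximal ideal $\n_{f(x)}$ (and hence its square) lands in $\sm_x$, pass to the quotient to obtain $f^*: T^*_{f(x)}N \to T^*_xM$, and deduce smoothness of $f^*(\alpha)$ from $f^*(C^\infty(N)) \subset C^\infty(M)$ applied to a local representation as in Definition \ref{smoothf}. The only difference is one of rigor, not method: the paper's two-line proof silently treats every germ in $C^\infty_{f(x)}(N)$ as the germ of a global function, and your bump-function argument via Lemma \ref{sing} (using that the constructed function is identically $1$ on a neighborhood) makes that identification explicit, correctly filling a step the paper glosses over.
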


\begin{proof}  Let $f^* ( C ^ \infty _{f(x)} (N))$  be the germs of smooth functions in $f ^* (C^ \infty (N))$  at $x$.
This  defines   a map : $f^* (\Om ^0 _{f(x)} (N)) \to  \Om ^0 _x M$.  Denote by $\n _{ f(x)}$ (resp. $\sm _x$)  the maximal  ideal  in  $C ^ \infty _{f(x)} (N)$ (resp. in  $C ^ \infty _x(M)$). Clearly  $f^* (\n _{ f(x)})  \subset \sm _x $.
This induces a map $ f^* : T_{f(x)}^*  N   \to  T^*_x M$. Since $f^* (C^\infty (N)) \subset  C^\infty (M)$,   the pull back $f^* (\alpha)$ is also a smooth  differential form, if $\alpha$ is smooth. This proves  Lemma  \ref{nsmf}.
\end{proof}

The following Lemma  characterizes the singularity  of a smooth structure $C^\infty (cL)$.  For any
 pseudomanifold w.i.c.s. $M$ denote by $rk (C^\infty (M))$ the  minimal number of the generators of $C^\infty (M)$.

\begin{lemma}\label{opt}   A Euclidean  smooth structure $C^\infty  (cL)$ has no singularity, if and only if   $rk (C^\infty (cL)) = rk (C^\infty (L)) = \dim (L) +1.$ 
\end{lemma}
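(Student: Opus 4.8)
The plan is to reduce everything to a single dimension count at the vertex. Fix the defining embedding $I = (f_1,\dots,f_k)\colon L \hookrightarrow S^{k-1}(1)\subset \R^k$, so that by Remark \ref{exotic}.4 the ring $C^\infty(cL)$ is generated by the functions $\tilde f_i([x,t]) = tf_i(x)$. Write $d' := \dim_\R\langle I(L)\rangle$ for the dimension of the linear span of $I(L)$ in $\R^k$, and set $n := \dim L$. I read ``$C^\infty(cL)$ has no singularity'' as ``$s$ is a removable singular point'', i.e. $C^\infty(cL)$ is, near $s$, the ring of a smooth $(n+1)$-manifold. The heart of the argument is the identity
\[
rk(C^\infty(cL)) = \dim T^*_s(cL) = d'.
\]
Granting this, both implications follow quickly.

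For the equality $\dim T^*_s = d'$ I would compute $\sm_s/\sm_s^2$ directly. Every germ in $\sm_s$ has the form $G(tf_1,\dots,tf_k)$ with $G(0)=0$, so by Hadamard's lemma it lies in the ideal generated by the $\tilde f_i$; hence the classes $[\tilde f_i]$ span $T^*_s$, giving a surjection $\Phi\colon \R^k \to T^*_s$, $c\mapsto \big[\sum_i c_i\,\tilde f_i\big]$. To compute $\ker\Phi$ I would use the radial derivative at $s$: for any germ $h = H(tf_1,\dots,tf_k)$ the limit $\partial_t h([x,t])|_{t=0}$ exists and equals $\langle \nabla H(0), I(x)\rangle$, and for a product of two germs in $\sm_s$ this radial derivative vanishes by the product rule, so every element of $\sm_s^2$ has zero radial derivative. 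Since the radial derivative of $\sum_i c_i \tilde f_i = t\,\langle c, I(x)\rangle$ is $\langle c, I(x)\rangle$, we get $c\in\ker\Phi$ iff $\langle c, I(x)\rangle\equiv 0$ on $L$, i.e. iff $c\perp\langle I(L)\rangle$. Thus $T^*_s \cong \R^k/\langle I(L)\rangle^{\perp}$ and $\dim T^*_s = d'$. The bound $rk(C^\infty(cL))\ge \dim T^*_s$ is automatic, since the Kähler differentials at $s$ of any generating set must span $T^*_s$; and $rk(C^\infty(cL))\le d'$ follows by replacing the $\tilde f_i$ with $t\alpha_1,\dots,t\alpha_{d'}$, where $\alpha_1,\dots,\alpha_{d'}$ are the coordinates of $I$ in a basis of $\langle I(L)\rangle$ — each $\tilde f_i$ is an $\R$-linear combination of the $t\alpha_a$ and conversely, so these $d'$ functions already generate. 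This gives the displayed identity.

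Finally I would run both directions through the inequality $d'\ge n+1$ (a compact $n$-manifold cannot embed in $S^{d'-1}$ when $d'\le n$) together with the rigidity when equality holds. If there is no singularity then $s$ is a smooth point, so $\dim T^*_s = n+1$, whence $rk(C^\infty(cL)) = n+1$ and $d' = n+1$; then $I(L)\subset S^{k-1}\cap\langle I(L)\rangle = S^n$, and an embedding of the compact $n$-manifold $L$ into the connected $S^n$ is onto by invariance of domain, so $L\cong S^n$ and therefore $rk(C^\infty(L)) = rk(C^\infty(S^n)) = n+1$. Conversely, if $rk(C^\infty(cL)) = \dim L + 1$ then $d' = n+1$, the same argument gives $L\cong S^n$ with $I$ a diffeomorphism onto the great sphere $S^n$, and then $\hat I_s\colon cL \to \R^{n+1}$, $[x,t]\mapsto t\,I(x)$, is a homeomorphism identifying $C^\infty(cL)$ with $C^\infty(\R^{n+1})$; hence $s$ is removable. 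I expect the delicate step to be the kernel computation $\ker\Phi = \langle I(L)\rangle^\perp$, i.e. controlling $\sm_s^2$ at the vertex; the radial-derivative criterion is what makes it clean, while the remaining topological inputs (invariance of domain, and the standard fact that $rk(C^\infty(L))$ equals the least embedding dimension of the compact manifold $L$) are routine.
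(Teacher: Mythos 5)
Your proof is correct, and it takes a genuinely different route from the paper's. The paper's own argument is a two-line geometric sketch: for the ``only if'' direction it takes a local diffeomorphism $f : cL(1) \to B^{l+1} \subset \R^{l+1}$ and observes that $f$ carries $L$ to a hypersurface in $\p B^{l+1}$; for the ``if'' direction it notes that $rk(C^\infty(L)) = \dim L + 1$ means $L$ embeds in $\R^{l+1}$ as a hypersurface, and then asserts that the cone, being a star-shaped domain in $\R^{l+1}$, carries the nonsingular induced structure. Your central identity $rk(C^\infty(cL)) = \dim T^*_s(cL) = \dim_\R \langle I(L)\rangle$, proved via Hadamard's lemma and the radial-derivative control of $\sm_s^2$, appears nowhere in the paper, and it buys something real: it isolates exactly where the hypothesis $rk(C^\infty(cL)) = \dim L + 1$ (as opposed to merely $rk(C^\infty(L)) = \dim L + 1$) enters. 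Read literally, the paper's ``if'' step uses only the hypersurface embedding of $L$, and by itself that is not enough: in the paper's own Remark \ref{exotic}.1, $L = S^1(z = \frac{1}{2})$ embeds in $\R^2$ as a hypersurface (so $rk(C^\infty(L)) = 2$), yet $cL(z=\frac{1}{2})$ is singular --- precisely because $\dim_\R\langle I(L)\rangle = 3$, which your rank formula detects and which forces $rk(C^\infty(cL)) = 3 \ne 2$. Your invariance-of-domain step then shows that under the full hypothesis $I(L)$ must be a great $n$-sphere, so the ``star-shaped domain'' of the paper's conclusion is literally all of $\R^{n+1}$; this is the content the paper leaves implicit, and your cotangent computation is what supplies it. The trade-off: the paper's proof is shorter and more geometric, while yours is longer but self-contained and closes the gap between ``$L$ is a hypersurface'' and ``the cone is smooth.'' One small point worth making explicit in your write-up: the kernel computation uses that every germ at $s$ has the form $G(tf_1,\dots,tf_k)$, which is justified by Remark \ref{exotic}.4 together with the germ-defined property of the smooth structure (a bump function from Lemma \ref{sing} lets you represent any germ at $s$ by a global smooth function), so the step is sound but deserves a sentence.
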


\begin{proof}    Assume that  $C^\infty (cL)$ has no singularity,  so there is a local diffeomorphism
$f : cL(1) \to B^{l+1} \subset  \R^{l+1}$, where  $B^{l+1} $ is a  ball  in $\R^{l+1}$. Observe that $f$  sends $L$ to $\p B^{l+1}$, we get the ``only if" assertion of Lemma \ref{opt}. Now let us prove the ``if"  assertion.   The condition $rk(C^\infty (L)) = \dim (L) +1$    holds, if and only if $L$ can be embedded in $\R ^{l+1}$ as a hypersurface, where $ l = \dim L$.  Since  $(C^\infty (cL))$  is a Euclidean smooth structure,     the cone $cL $ is  a star-shaped domain in $\R ^{l+1}$. So the smooth structure on $cL$ induced by  the embedding $cL \to \R^{l+1}$ is a smooth structure without singularity. 
\end{proof}

\section{Tangent bundles and vector fields on  a    pseudomanifold w.i.c.s. with a finitely generated smooth structure}

In this section we 
 study only  finitely generated smooth structures, so we omit the  adjective ``finitely generated", if no misunderstanding can occur.
We introduce the notion of
the Nash tangent bundle   of a  smooth  pseudomanifold w.i.c.s. $M$, the notion of the Zariski tangent bundle
of $M$,  and the notion of   the tangent bundle  of $M$, as well as
the notion  of a smooth   Nash vector  field on  $M$. Their properties has been  analyzed in Lemma \ref{zaris}  and Proposition \ref{dynamic}. We introduce the notion of characteristic classes of $M$  (Remark \ref{char}). 
 Using  the invariance  of  the tangent cone and the  cotangent space  at singular points  on $M$, we  prove the existence of infinitely many   Euclidean smooth structures  on  any conical pseudomanifold  $M$  (Proposition \ref{unend}).   

Let $M^m$ be a     pseudomanifold w.i.c.s..  Since $C^\infty(M)$ is finitely generated,  there is a smooth embedding $F : M \to \R^{l+1}$ such that $ F^* ( C^\infty (\R^{l+1})) = C^\infty (M)$.
Denote by $Gr_m(\R^{l+1})$ the set of  oriented $m$-planes  in $\R^{l+1}$.   The embedding $F$ induces 
the gaussian map $\bar F: M^{reg} \to \R^{l+1} \times Gr_m (\R ^{l+1})$ sending a  point $x$
to the pair $(x, \la {T_x M^{reg}}\ra)$. Denote by $\hat M^m$ the closure of the image
$\bar F(M^{reg})$  in $\R^{l+1} \times Gr_m (\R^{l+1})$.  
We called $\hat M^m$ {\it the Nash blowup} of $M^m$. We define  the projection
$\pi: \hat M^m \to M^m$  by setting  $\pi (x, v):  = x$.

We note that the fiber $\pi^{-1} (s)$, $ s\in S_M$, is a  closed set in $Gr_m (\R^{l+1})$.  Hence $\hat M^m$ is compact,
if $M^m$ is compact. 

We define
{\it the Nash tangent cone} $\hat T_xM^m$ at a point $x\in M$  by  setting $\hat T_xM^m: = \{ v\in \R^{l+1}|\, v\in \pi^{-1} (x)\}$. If $x$ is a regular point we have $\hat T_x M^m  = T_xM^m$.
The union $\hat TM^m:=\cup _{x \in M^m} \hat T_x M^m$ is called {\it  the Nash  tangent  bundle}  of $M^m$. The  Nash tangent bundle carries a natural topology, since $\hat TM$ is a locally closed subset in $\R^{l+1} \times Gr_m (\R^{l+1})$. Clearly the inclusion $TM^{reg} \to  \hat TM^m$ is a continuous map with respect to this topology. Let $\pi: \hat TM^m \to M^m$ denote the natural projection.
Then $\pi$ is a continuous map.   Lemma \ref{Nash} below implies that   the  tangent bundle  does not depend  on the  smooth embedding $M \to \R^{l+1}$.

Now we want to  introduce the notion of a Nash smooth vector field on $M$. For this purpose  we will provide  $\hat TM$ with a  smooth structure, that is a choice $\R$-subalgebra of ``smooth functions" in $C^0 (\hat TM)$, using the induced embedding of    $\hat TM^m$ into the  product $\R^{l+1} \times \R^{l+1}$:  $(x, v) \mapsto (F (x), v)$. 
Note that if  $K$ is a   subset  of a   space $M$ with a smooth  structure $C^\infty (M)$ then we define  a  continuous function $f$ on $K$ to be smooth
($ f\in C^\infty (K)$) if  $f$ is the restriction of  $\tilde f \in C^\infty (U(K))$ to $K$, where $U(K)$  is an  open neighborhood of $K$ in $M$ \cite[p.16]{MR1991}. If $K$ is locally closed  and $M$ is finitely generated, then $C^\infty (M)$ is finitely generated. If $K$ is closed then $C^\infty (K) = C^\infty (M) _{| K}$  \cite[p. 20]{MR1991}. 

By Proportion \ref{pflaum} the projection $\pi: \hat T M^m \to M^m$ is a smooth map, since it is the restriction of
the smooth  projection $\R^{l+1} \times \R^{l+1}  \to \R^{l+1}$. 

We call a smooth section  $V:M \to \hat TM$ {\it a smooth  Nash vector field} on  $M$. By Proposition \ref{pflaum}.1 a section $V$ is a smooth  Nash vector field, if and only if
$F_*\circ V: M \to  T\R ^{l+1} = \R^{l +1} \times \R^{l+1}$ is a smooth map, where $F_* : \hat TM \to  T\R^{l+1}$ is the inclusion.

\begin{example}\label{exn} We  consider the smooth  pseudomanifold w.i.c.s. $cL(z={1\over 2})$ in Remark \ref{exotic}.1. 
It is easy to  that the  Nash blowup of $cL(z={1\over 2})$ is diffeomorphic to  the cylinder $S^1 \times \R$. The Nash tangent space $T_OcL(z={1\over 2})$ is  the cone over $\R^ 2 \setminus ( B^ 2)$, where $B^2 $ is the open  disk on $\R^2$ whose boundary $\p B^2$ is $S^1 (z={1\over 2})$.
\end{example}

\begin{lemma}\label{Nash} 1. The homeomorphism  type of the Nash blowup  $\hat M^m$ of a  smooth  pseudomanifold w.i.c.s.
$M^m$  does not depend on the choice of  a smooth embedding $F : M^m \to \R^{l+1}$. 

2. Let $f : M \to N$ be a smooth map. Then  the differential map $Df: TM^{reg}  \to TN^{reg}$ extends naturally to a smooth
map $Df: \hat TM \to \hat TN$.
\end{lemma}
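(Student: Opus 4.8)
The plan is to treat both statements by a single device. By Proposition~\ref{pflaum}.1 a smooth map between finitely generated smooth structures is the restriction of a genuine smooth map of the ambient Euclidean spaces, and one lifts that ambient map to the Nash data (to planes in part~1, to vectors in part~2) through its differential. The only delicate point in each case is continuity of the lift across the singular fibres, where the limiting tangent planes live; everything there reduces to the elementary fact that a limit of vectors lying in a convergent sequence of $m$-planes lies in the limit plane.

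For part~1, let $F_1:M\to\R^{l_1+1}$ and $F_2:M\to\R^{l_2+1}$ be smooth embeddings with $F_i^*(C^\infty(\R^{l_i+1}))=C^\infty(M)$, and write $\bar F_i,\hat M_i,\pi_i$ for the Gaussian maps and Nash blowups. Since each component of $F_2$ lies in $C^\infty(M)=F_1^*(C^\infty(\R^{l_1+1}))$, there is a smooth $H:\R^{l_1+1}\to\R^{l_2+1}$ with $F_2=H\circ F_1$, and symmetrically a smooth $G$ with $F_1=G\circ F_2$. On $M^{reg}$ both $F_i$ are immersions, so differentiating gives that $DH_{F_1(x)}$ maps $P_1(x):=(F_1)_*(T_xM^{reg})$ isomorphically onto $P_2(x)$, and $DG_{F_2(x)}\circ DH_{F_1(x)}=\mathrm{id}$ on $P_1(x)$. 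I would then show that the formula $\tilde H(p,Q):=(H(p),DH_p(Q))$, which on the dense regular part realizes $\bar F_2\circ\bar F_1^{-1}$, is defined and continuous on all of $\hat M_1$. The key is injectivity of $DH_p|_Q$ at every $(p,Q)\in\hat M_1$: over regular points this holds because $F_2$ is an immersion, and over a singular $s$, choosing $x_n\to s$ with $P_1(x_n)\to Q$ and passing to the limit in $DG_{F_2(x_n)}\circ DH_{F_1(x_n)}|_{P_1(x_n)}=\mathrm{id}$ gives $DG_{H(s)}\circ DH_s|_Q=\mathrm{id}_Q$, whence $DH_s|_Q$ is injective and $DH_s(Q)$ is a well-defined $m$-plane depending continuously on $(s,Q)$. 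Thus $\tilde H$ carries $\hat M_1$ continuously into $\hat M_2$; by symmetry the lift $\tilde G$ of $G$ is a continuous map $\hat M_2\to\hat M_1$, and since $\tilde G\circ\tilde H$ is the identity on the dense set $\bar F_1(M^{reg})$, the two are inverse homeomorphisms $\hat M_1\cong\hat M_2$.

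For part~2, fix embeddings $F:M\to\R^{l+1}$ and $G:N\to\R^{k+1}$ defining the two smooth structures. As above, smoothness of $f$ yields a smooth $\tilde f:\R^{l+1}\to\R^{k+1}$ with $G\circ f=\tilde f\circ F$, so that $D\tilde f_{F(x)}$ sends $F_*(T_xM^{reg})$ into $G_*(T_{f(x)}N^{reg})$ and realizes $Df$ on $M^{reg}$ under the embedding $(x,v)\mapsto(F(x),v)$ of $\hat TM$ into $\R^{l+1}\times\R^{l+1}$. I would define $Df(x,v):=(f(x),D\tilde f_{F(x)}(v))$; this is the restriction to $\hat TM$ of the ordinary tangent map $T\tilde f:(p,w)\mapsto(\tilde f(p),D\tilde f_p(w))$, which is smooth because $\tilde f$ is, so $Df$ is smooth by Proposition~\ref{pflaum}. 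It remains to check that $Df$ lands in $\hat TN$: for $(s,v)\in\hat TM$ over a singular $s$ write $v=\lim v_n$ with $v_n\in T_{x_n}M^{reg}$ and $x_n\to s$; then $D\tilde f_{F(x_n)}(v_n)\in G_*(T_{f(x_n)}N^{reg})$ lies in a plane $P'_n\in\hat N$, and after passing to a convergent subsequence $P'_n\to P'\in\hat N$ over $f(s)$ one gets $D\tilde f_{F(s)}(v)=\lim D\tilde f_{F(x_n)}(v_n)\in P'\subset\hat T_{f(s)}N$. Since $TM^{reg}$ is dense in $\hat TM$, this extension agrees with the intrinsic $Df$ on the regular part and is therefore independent of the auxiliary $\tilde f$, which gives the asserted naturality.

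I expect the main obstacle to be precisely the behaviour of the lifted differential across the singular fibres. In part~1 this is the injectivity of $DH_s$ on each limiting plane $Q$, obtained by passing to the limit in the relation $DG\circ DH=\mathrm{id}$; in part~2 it is the closedness of the Nash data, namely that a limit of tangent vectors taken over regular points lands inside a limiting plane of $\hat N$. Both are handled by the same continuity principle for convergent families of $m$-planes, so once that observation is in place the remaining verifications are routine.
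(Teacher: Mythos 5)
Your argument is correct and follows the same skeleton as the paper's proof: produce ambient smooth maps realizing the change of embedding (you get globally defined $H,G$ with $F_2=H\circ F_1$, $F_1=G\circ F_2$ from finite generation, where the paper only produces local maps $\sigma_{12},\sigma_{21}$ near each singular point via Proposition \ref{pflaum}), lift them through their differentials to the Grassmannian data, and conclude by noting the lifts are mutually inverse on the dense regular locus. Where you genuinely improve on the paper is at the singular fibres. The paper asserts that $\sigma_{12}$ ``lifts to a smooth map $\R^{l_1+1}\times Gr_m(\R^{l_1+1})\to\R^{l_2+1}\times Gr_m(\R^{l_2+1})$,'' which as stated is false in general: the differential of an ambient smooth map need not carry an arbitrary $m$-plane to an $m$-plane, so the lift exists only where the differential is injective on the plane in question. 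Your limit argument --- passing to the limit in $DG\circ DH|_{P_1(x_n)}=\mathrm{id}$ along $P_1(x_n)\to Q$ to obtain injectivity of $DH_s|_Q$ at every point of $\hat M_1$ --- is precisely the missing verification that makes the induced maps $h_1,h_2$ well defined and continuous on the whole blowup; the paper's one-line appeal to the lifts quietly assumes it. Similarly, part 2 is dismissed in the paper with ``follows directly from the construction,'' whereas you supply the content: restriction of the ambient tangent map $T\tilde f$, smoothness via Proposition \ref{pflaum}, and membership in $\hat TN$ via compactness of the Grassmannian. One small caveat: your part-2 computation tacitly assumes $f(x_n)\in N^{reg}$, which the lemma's formulation $Df:TM^{reg}\to TN^{reg}$ presupposes anyway; if $f$ sends regular points into $S_N$, observe instead that $D\tilde f(v_n)$ is the velocity of the smooth curve $f\circ\gamma_n$ in $N$, hence lies in the tangent cone inside $\hat T_{f(x_n)}N$, and your limit argument goes through unchanged.
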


\begin{proof} 1.  Let $F_1 : M^m \to \R^{l_1 +1}$ and $F_2 : M^m \to \R^{l_2+1}$ be  two smooth embeddings. Let $s\in S_M$.
The maps $F_1 \circ F_2 ^{-1}: F_2 (M) \to  F_1 (M)$  and $ F_2 \circ F_1 ^{-1} : F_1 (M) \to F_2 (M)$ are smooth maps, hence the argument in the proof of Proposition \ref{pflaum} yields that,  there are  smooth  maps  $\sigma _{12} : \R ^{l_1 +1} \to \R^{l_2 +1}$,  $\sigma _{21} : \R^{l_2+1} \to \R^{l_1 +1}$ such 
that  $(F_2 \circ F_1 ^{-1})_{| U(s)}  = (\sigma _{12}) _{| F_1 (U(s))}$  and $ (F_1\circ F_2 ^{-1})_{| U(s)} = (\sigma _{21}) _{|F_2 (U(s))}$  for  some small neighborhood $U(s)$  of $s$. The smooth maps $\sigma _{12}$
and $\sigma_{21}$ lift 
to  smooth maps  $\tilde \sigma _{12} : \R^{l_1 +1} \times Gr_m (\R^{l_1 +1}) \to \R^{l_2 +1} \times Gr_m (\R^{l_2 +1})$ and
$\tilde \sigma _{21} : \R^{l_2 +1} \times Gr_m (\R^{l_2 +1}) \to \R^{l_1 +1} \times Gr_m (\R^{l_1 +1})$.   These maps induce a  map $h_1 : \hat F_1(U(s)) \to \hat F_2 (U(s))$ and  a map $h_2 : \hat F_2 (U(s)) \to F_1 (U(s))$  such that  $h_1 \circ   h_2 = Id_{| \hat F_2(U(s))}$ and $h_2 \circ h_1 = Id _{| \hat F_1 (U(s))}$.  Hence $h_1$ and $h_2$ are homeomorphisms. This proves the first assertion of Lemma \ref{Nash}.

2. The second assertion follows directly from the construction of $\hat TM$.
\end{proof}

\begin{remark}\label{char} Let us imitate the Mather construction of characteristic classes  for singular algebraic varieties \cite[\S 2]{MacPherson1974}  to pseudomanifolds w.i.c.s. using the Nash blowup. Let $T\tilde M$ denote the  restriction of the tautological  bundle $V^m$ of the  Grassmanian $Gr_m (\R^{l+1})$ to $\hat M$ (more precisely
$T\tilde M = ( i \circ \pi) ^ *  V^m$, where $\pi : \R^{l+1} \times Gr_m (\R^{l+1}) \to Gr_m (\R^{l+1})$ is the projection, and
$ i : \hat M \to  \R^{l+1} \times Gr_m (\R^{l+1})$ is the embedding).  We set
$$  char ( M) : = \pi _* ( Dual ( char  (  T\tilde  M))),$$
where  Dual denotes the Poincare  duality  map defined  by capping   with the  fundamental  homology class.
It is easy to see that this definition  is well-defined and  it satisfies functorial properties of characteristic classes. 
\end{remark}

We  define  {\it the  tangent cone $T_x M$} as  the subset in $\hat T_x M$ consisting  of vectors  of the form  $\dot \gamma (0)$, where   $\gamma(r): [0, \infty) \to M$ is a smooth curve (ray)
 such that $\gamma (0) = x$.  Clearly the tangent cone  $T_xM$  at a regular point coincides with  the  tangent space
 $T_x M^{reg}$.  {\it The tangent  bundle $TM$} is defined as the union $\cup _{x \in M}T_xM$. It is a  closed subset of $\hat TM$, hence it has  the natural induced
 smooth structure, see the explanation before Example \ref{exn}. 
 
\begin{example}\label{etcone} Let $\gamma(r)= [\alpha (r),\beta (r)]$ be a  smooth curve (interval) on $cL$  with  $\alpha (r) \in L$ and $\beta (0) = 0$.   We provide $cL$ with  a Euclidean smooth structure,  using the  natural embedding of  $cL \to \R^{l+1}$  as a cone over smooth submanifold $L\subset  S^l(1) \subset \R^{l+1}$  that sends $[x, t]$ to   $x. t\in \R^{l+1}$, here $t\in  \R$ acts on $\R^{l+1}$ by multiplication. By Proposition \ref{pflaum}.1, $\gamma(r)$ is smooth iff $\alpha (r) \cdot \beta (r) $ is a smooth curve in $\R^{l+1}$.  Since $\beta (r) =0$, we get  $\dot \gamma (0) = \beta ' (0)\alpha (0)\in \R^{l+1}$.  Thus $T_s cL= \cup _{x \in L} \la \p_t (x) \ra _{\otimes \R}$. 
\end{example}
 
We define  {\it the degree  of flatness of  the tangent cone $T_sM$} as the number of connected components of  the subset
$\bar T_sM := \{ v \in T_s M|\:  - v \in T_s M\}\setminus \{0\}$  of  {\it flat tangent vectors $v$}.   Clearly,
the  collection of degrees of  flatness   of the  tangent cones at singular points $s \in S_M$ is  a diffeomorphism invariant of  $M$.   As an application of the degree of flatness, we    prove the following
 
\begin{proposition}\label{unend}  For any   pseudomanifold w.i.c.s. $M$ there exist  infinitely many   Euclidean smooth
structures on $M$.
\end{proposition}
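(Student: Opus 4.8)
The plan is to exploit the degree of flatness of the tangent cone at a singular point, which was just shown to be a diffeomorphism invariant of $M$, together with the equivalence between Euclidean equivalence and equivalence established in Proposition \ref{pflaum}. First I would fix one singular point $s\in S_M$ with link $L=L_s$ and keep the defining embeddings and trivializations at all the other singular points unchanged; a Euclidean smooth structure is precisely a choice of such data at each $s\in S_M$, so it then suffices to produce infinitely many embeddings of $L$ whose associated tangent cones at the vertex have pairwise distinct degrees of flatness. Indeed, if $C^\infty_n(M)$ and $C^\infty_{n'}(M)$ differ only at $s$ and the degrees of flatness of their tangent cones at $s$ are distinct, then by invariance the two structures are not equivalent, hence by Proposition \ref{pflaum} not Euclidean equivalent.

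The second step is to compute the invariant for a Euclidean structure arising from an embedding $I:L\to S^l(1)\subset\R^{l+1}$. By Example \ref{etcone} the tangent cone $T_scL=\cup_{x\in L}\la\p_t(x)\ra$ is exactly the cone $\{t\cdot w\mid t\geq 0,\ w\in I(L)\}$ over $I(L)$. Hence a nonzero vector $v$ is flat, i.e. $v\in\bar T_scL$, precisely when both $v/|v|$ and $-v/|v|$ lie in $I(L)$; writing $A:=I(L)\cap(-I(L))$, the radial map $v\mapsto(v/|v|,|v|)$ identifies $\bar T_scL$ with $A\times(0,\infty)$. Consequently the degree of flatness of $T_scL$ equals the number of connected components of the antipodal self-intersection set $A$, and the task reduces to realizing arbitrarily many components of $A$.

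For each $n$ I would construct an embedding $I_n:L\to S^{l_n}(1)$ as follows. Embed all but finitely many small charts of $L$ into an open hemisphere, so that this part of the image meets its own antipode nowhere. Then choose $n$ pairwise disjoint coordinate disks $D_1,\dots,D_n\subset L$ and, inside each $D_i$, fold the embedding so that one subdisk maps onto a small cap $\Delta_i$ near the north pole while another subdisk maps onto the exactly antipodal cap $-\Delta_i$ near the south pole, the two being related by the antipodal map. Working in a sphere of sufficiently high dimension there is enough room to route the remaining ``connecting'' part of $L$ as a smooth embedding disjoint from its own antipode off the prescribed caps. For such $I_n$ one has $A_n\supseteq\cup_i(\Delta_i\cup(-\Delta_i))$, and, arranging the connecting region to avoid any further antipodal coincidence, the $2n$ caps are the only contributions and are pairwise separated, so the degree of flatness equals $2n$; letting $n$ vary yields infinitely many distinct values.

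Assembling these $I_n$ at $s$, with the fixed data at the other singular points, produces Euclidean smooth structures $C^\infty_n(M)$ on $M$ whose collections of degrees of flatness take the pairwise distinct value $2n$ at $s$. By the invariance recalled above and Proposition \ref{pflaum} these structures are pairwise non-Euclidean-equivalent, which proves the proposition. The main obstacle is the folding construction in the third step: one must verify that the modification produces a bona fide smooth embedding and that no unintended antipodal coincidences are created along the connecting region. Both points I would handle by performing the construction in high codimension, where the locus of antipodal coincidences is a high-codimension (generically empty) condition away from the engineered caps, and by smoothing the transition between the hemispherical part and the folded caps.
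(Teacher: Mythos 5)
Your proposal is correct and follows essentially the same route as the paper: reduce to a single cone $cL$, use the degree of flatness of the tangent cone (a diffeomorphism invariant, with Proposition \ref{pflaum} converting equivalence to Euclidean equivalence), and engineer embeddings $L \to S^{l}(1)$ starting from an antipode-free one and then creating a prescribed number of antipodal coincidences. The only difference is cosmetic: the paper arranges $I(L)\cap(-I(L))$ to be $2k$ isolated points while you use $2n$ antipodal caps, and both constructions rely on the same high-codimension general-position argument that the paper itself only sketches.
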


\begin{proof}  It suffices to show that  there is a  smooth structure on $cL$ with any given  degree of flatness.
 First we  embed $L \to S^l(1) \cap  \{ x\in \R^{l+1}|\:  x_{l+1} = 1/2\}$, so that  the degree of flatness of $T_scL$ is zero.   Now pick  $k$ points $ x_1 , \cdots,x_k \in L\subset \R^{l+1}$.  Clearly $-x_1, \cdots,  - x_k \in S^l(1)$.
It is easy to   construct a  new embedding $L \to S^{l+1}$ such that $x_i , -x_i \in L$ for all $i =1, k$.
Moreover,  a careful construction of this  new embedding can be made  so that  $L \cap  (-L) = \{ x_1, -x_1, \cdots, x_k, -x_k\}$. 
This completes the proof of Proposition \ref{unend}.  
\end{proof}

Let $f: N \to M$ be a smooth map  between   pseudomanifolds w.i.c.s..
Denote by $f^* (\hat TM)$ the fiber product (the pullback) of $f$ and $\pi : \hat TM \to M$: $f^* (\hat TM) = \{ (x,v)\in N \times TM|\, f(x) = \pi(y)\}$. A section $s :  N \to 
f^ * (\hat TM)$ is called  smooth, if the  decomposition $i \circ s$ is a smooth map $N \to \hat T M$, where $i : f^*(\hat TM) \to TM$ is the natural
map. It is also called {\it a smooth Nash vector field
along a map $f$}. A special case of this  concept is the   notion of a  smooth   Nash vector field  (along the identity map).

Let $g_t$ be a smooth  family of diffeomorphisms of a smooth  pseudomanifold w.i.c.s. $M$   and $g_0 = Id$.
Assume that all  singular points  of $M$ are   non-trivial, i.e.  $\Nn_s$ is not diffeomorphic to  a  ball  for
all $s \in S_M$.
Then  any diffeomorphism $\psi_t$ of $M$ which is isotopic to the identity must leave $S_M$ fixed.
 By Proposition \ref{pflaum} ${d\over dt}_{| t = 0} \psi_t$ is a   smooth  Nash vector field $V$ on  $M$, which  vanishes at $S_M$. 
 
 A singular point $s$ is called {\it trivial}, if  $L_s$ is   the standard  sphere  and $cL_s$ is   diffeomorphic to $\R ^{l+1}$. Otherwise  $s$ is called {\it a nontrivial singular  point}.

\begin{proposition}\label{dynamic} Let $M$ be a compact  pseudomanifold w.i.c.s. provided with a Euclidean smooth structure, and $V$  a smooth Nash  vector field  on  $M$.

1. If  a singular point $s \in S_M$ is  nontrivial,  then  $V(s) = 0 $. 

2.  If $V(s) = 0$ for all $s \in S_M$,   there
exists a one-parameter group of   smooth diffeomorphisms $\psi_\tau$ on $M$  such that ${d\over d\tau}_{|\tau=0} \psi_\tau (x) = V(x)$ and $\psi_0 = Id$.
\end{proposition}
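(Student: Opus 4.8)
The plan is to prove the two assertions separately, in both cases passing through a global smooth embedding. Fix the Euclidean structure near $s$ by $I_s:L\to S^l(1)\subset\R^{l+1}$ with induced cone embedding $\hat I_s([x,t])=t\cdot x$, and let $F:M\to\R^{l+1}$ realize the finitely generated structure, so that $F^*(C^\infty(\R^{l+1}))=C^\infty(M)$ (Remark \ref{exotic}.4). Writing $F_*\circ V=(F,W)$ with $W:M\to\R^{l+1}$ smooth, hence continuous, the Nash condition says $W(x)\in T_xM^{reg}$ at regular $x$. For Part 1, I would first compute the tangent plane of the embedded cone at a regular point $tu$ ($u\in L$, $t>0$): it is
$$P_u:=\langle u\rangle\oplus T_uL\subset\R^{l+1},$$
independent of $t$, with orthogonal complement the normal space of $L$ in $S^l$ at $u$. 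Since $W$ is continuous and $W([u,t])\in P_u$ for every $t>0$, letting $t\to 0$ yields $W(s)=V(s)\in P_u$ for each $u\in L$, so that
$$V(s)\in\Lambda:=\bigcap_{u\in L}P_u,$$
the \emph{lineality space} of the tangent cone.

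The core of Part 1 is to show $\Lambda=\{0\}$ when $s$ is nontrivial. Any $w\in\Lambda$ is tangent to the smooth manifold $cL\setminus\{s\}$ at every point (by scale invariance of the cone), so the constant field $w$ is everywhere tangent and its flow, translation by $\tau w$, preserves $cL\setminus\{s\}$ and hence $cL$. Thus $\hat I_s(cL)$ is invariant under all translations in $\Lambda$, and orthogonal projection onto $\Lambda^\perp$ gives a product splitting $\hat I_s(cL)=\Lambda\times C'$, where $C'=\hat I_s(cL)\cap\Lambda^\perp$; maximality of $\Lambda$ forces $C'$ to have trivial lineality. If $\dim\Lambda=k\geq1$, the singular locus of $cL$ is exactly $\Lambda\times\{v\}$, where $v$ is the vertex of $C'$. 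When $C'$ is smooth at $v$ this forces $cL\cong\R^m$ (with $m=\dim M$) and round link $L\cong S^{m-1}$, i.e. $s$ is trivial; when $C'$ is singular at $v$ the singular set $\Lambda\times\{v\}\cong\R^k$ is not isolated, contradicting the definition of a pseudomanifold w.i.c.s. Either alternative contradicts the hypotheses, so $\Lambda=\{0\}$ and $V(s)=0$.

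For Part 2, I would integrate $V$ through the embedding. Because the structure is finitely generated, each component of $W$ is the pullback of a smooth function on $\R^{l+1}$, so $W$ extends to an ambient smooth vector field $\tilde W$ on $\R^{l+1}$ with $\tilde W|_{F(M)}=W$; multiplying by a compactly supported cutoff equal to $1$ near the compact set $F(M)$, we may assume the flow $\Phi_\tau$ of $\tilde W$ is complete. The Nash condition makes $\tilde W$ tangent to the submanifold $F(M^{reg})$ along itself, and the hypothesis gives $\tilde W(F(s))=W(s)=V(s)=0$ at every $s\in S_M$. Uniqueness of integral curves then shows that a curve starting in $F(M^{reg})$ stays there for all time (it cannot reach a fixed point in finite time) while singular points stay fixed, whence $\Phi_\tau(F(M))=F(M)$. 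Setting $\psi_\tau:=F^{-1}\circ\Phi_\tau\circ F$ gives a one-parameter group of homeomorphisms with $\psi_0=Id$ and $\tfrac{d}{d\tau}\big|_{\tau=0}\psi_\tau=V$. Each $\psi_\tau$ is a smooth diffeomorphism, since for $f=\tilde f\circ F\in C^\infty(M)$ one has $\psi_\tau^*f=(\tilde f\circ\Phi_\tau)\circ F\in F^*(C^\infty(\R^{l+1}))=C^\infty(M)$, and likewise for $\psi_{-\tau}=\psi_\tau^{-1}$.

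I expect the ODE integration in Part 2 to be routine once the ambient extension is in place; the genuinely delicate point is Part 1, namely identifying $V(s)$ with a vector in the lineality space of the tangent cone and converting nonvanishing lineality into the trivial/non-isolated dichotomy. The step requiring the most care is justifying that translation invariance of the non-convex, singular cone produces the clean product splitting $\Lambda\times C'$, and that a positive-dimensional $\Lambda$ is incompatible with an isolated conical singularity unless the cone is linear; in particular the tangency-and-uniqueness reasoning must be handled at the vertex, where $cL$ fails to be a manifold.
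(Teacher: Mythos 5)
Your proposal is correct, and while your Part 2 is essentially the paper's own argument, your Part 1 takes a genuinely different route. For Part 2 the paper does exactly what you do: use the finitely generated structure to extend $F_*V$ to a compactly supported smooth field on $\R^{l+1}$, integrate, and restrict the flow to $F(M)$; in fact your invariance justification (tangency along $F(M^{reg})$ together with the fact that an integral curve cannot reach the equilibrium $F(s)$ in finite time, by uniqueness of solutions) is more complete than the paper's, which only asserts short-time invariance from compactness. In Part 1 both arguments start identically --- scale invariance $T_{tu}cL=T_ucL$ plus a limit along rays shows $V(s)$ is tangent to the cone at every regular point --- but then diverge: the paper normalizes $V(s)=\partial x_1$, writes its tangential part along $L$ as $\tilde V(x)=\partial x_1+\lambda(x)\partial_t(x)$, studies the height function $\tilde x_1$, and identifies the integral curves of $\tilde V$ with reparametrized great circles (integral curves of the spherical projection $W_1$ of $\partial x_1$), concluding $L$ is a totally geodesic sphere and $s$ removable; you instead place $V(s)$ in the lineality space $\Lambda=\bigcap_u P_u$ and convert $\Lambda\neq 0$ into the splitting $cL=\Lambda\times C'$. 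Your route is more structural and avoids the paper's delicate dynamical analysis near the poles, but it obliges you to settle the two points you flag, and both do go through in the embedded smooth category: translation invariance through the vertex follows from closedness of the cone and continuity, propagating $x+\tau w\in cL$ from points off the line $\R w$; and since $cL\setminus\{0\}$ is by definition a smooth embedded submanifold, every point $(\lambda,v)$ with $\lambda\neq 0$ is a smooth point, so slicing by the affine subspace $\lambda+\Lambda^{\perp}$ --- transverse because the tangent space contains $\Lambda$ --- shows $C'$ is smooth at its vertex, whence your ``singular $C'$'' branch never actually occurs and the standard fact that a cone which is a $C^1$-submanifold at its vertex equals its tangent plane gives $cL$ linear, $L$ a great sphere, and $s$ trivial. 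One caution worth making explicit: the purely topological inference ``product is a manifold $\Rightarrow$ factor is a manifold'' is false (Bing's dogbone space has $D\times\R\cong\R^4$), so the transversal slice argument in the smooth embedded setting is genuinely needed, not a formality.
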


\begin{proof} 1) Let $V$ be a  smooth Nash vector field on  a compact  pseudomanifold w.i.c.s. $M$.  By using a smooth partition of unity it suffices to consider the case that  $sppt (V) \subset \Nn_s$ for some  $s\in S_M$. Fix an embedding $I_s : L_s \to S^{l}(1) \subset \R^{l+1}$.  By Lemma \ref{Nash}.2  we can assume that $V$ is a vector field  on $(cL_s) \subset \R^{l+1}$.
Suppose   that $V(s) \not = 0$. Using  a linear transformation of $\R ^{l+1}$ we can assume that
$V(s) = \p x_1$.  Since $T_x  cL_s  =  T_{\lambda x} cL_s$ for all $x\in cL_s ^{reg}$ and for all $\lambda > 0$, using the compactness of the Grassmanian $Gr_m (\R^{l+1})$, $ m = \dim M$, we conclude that $\p x_1$  belongs to $T_x cL_s$ for all $ x\in L_s$. 
We note that  $T _x cL = T_x L \oplus  \la \p_t (x) \ra _{\otimes \R}$ for any  $x \in L$.  Let us  denote by $\tilde V$  the projection of $\p  x_1$ to
$TL$ with respect to the above decomposition.  Then $\tilde V$ is a smooth vector field  on $L$.  We   write 
$$ \tilde V(x) = \p x_1 + \lambda (x) \p_t(x) .$$
Let  us  denote by $|.|$ the norm  defined by the Euclidean metric on $\R ^{l+1}$. Then $|\tilde V (x) | \le |\p x_1 | = 1$. Hence  $| \lambda (x )| \le 1 $.  Denote by $\tilde x_1$ the restriction of the  coordinate function $x_1$   to $L$. Since $|x_1(x)|\le 1$ we get  $ \tilde V( \tilde x_1)(x) = 1 + \tilde x_1(x) \lambda ( x) \ge 0$.
Furthermore, $\tilde V(\tilde x_1 )(x) = 0 $ only if $\tilde x_1(x) \lambda (x ) = -1$, hence $ \tilde x_1(x) = \pm 1 = -\lambda (x)$.   Hence the  differential $d\tilde x_1$ vanishes at maximal two
points  on $L$, which are the south  pole and the north pole  of $S^l(1)$.   Now we show that $L$ is  a totally geodesic sphere in $S^ l(1)$. Denote by $W_1$ the orthogonal projection of  $\p x_1$  to $S ^l(1)$.  Clearly for all $x \in L$ we have $\tilde V(x) = W_1 (x)$, where $\tilde V$ is defined above.  Hence the   integral curves of $\tilde V$  on $L$ coincide the  integral curves of $W_1$, if they have a common point.  Note the integral curve of   $W_1$  coincides with a geodesic after reparametrization. (At a point $x\in S^l$ we  intersect  $S^l$ with the  plan $\R^2$ spanned on $\p _t (x), \p x_1$.
Clearly  the integral curve of $W_1$ through $x$ lies on this  intersection.)  Hence   $L_s$   is totally geodesic.
Thus   $s$ is a removable singularity.

2) Let $F: M \to \R^{l+1}$ be a smooth  embedding, i.e.  $F ^* ( C^\infty (\R^{l+1})) = C^\infty (M)$. We will show that there exists a  smooth vector field  $\tilde V$ with compact support on  $\R^{l+1}$ such that the restriction
of $\tilde V$ to $F(M)$  coincides with the vector field $F _*(V)$.

Since $V$ is a smooth map  from $M$ to $\hat TM$, the  argument in the proof of  Lemma \ref{pflaum}  yields that there exists a   smooth map  $\sigma: \R^{l+1} \to T\R^{l +1}\supset \hat T M$ such  that $\sigma _{|F(M)} =F_*( V)$. Using a cut off function  we can  assume that $\sigma$ has a compact support in $\R^{l+1}$, since $M$ is compact.

Now we  set $\tilde V (x) :  = ( x, \pi_2 \circ \sigma)$ for  $x \in \R^{l+1}$, where  $\pi_2 : T\R^{l+1}  = \R^{l+1} \oplus \R ^{l+1}_2 \to \R^{l+1}_2$ is the projection onto the second summand. Clearly $\tilde V$ is  a  smooth vector field on $\R^{l+1}$ such that   $\tilde V_{| F(M)} =F_* (V)$.

Let $\tilde \psi _\tau$ be the smooth diffeomorphisms on $\R^{l+1}$   generated by  $\tilde V$. We will show that $\tilde \psi _\tau (x) \in M$ for all $\tau$ and for all $x\in M$.  Note that $\tilde  V(s) =  V(s) = 0$.  Hence $s$ is a fixed point of the flow $\tilde \psi _\tau (s)$  for all $\tau >0$.  %
Next we observe that,  since $M$ is  compact, there exists a  positive  number  $\eps $ such that
for all $x \in M$ we have $\tilde \psi _\tau (x) \in   M $, if $0\le \tau \le \eps$. 
 Clearly, the restriction of $\tilde \psi_\tau$  to $M$    provides us with the required  one-parameter family of diffeomorphisms. This   proves the second assertion.
\end{proof}

Let us define {\it the Zariski tangent cone $T^Z_xM$}  at a point $x$ in a smooth  pseudomanifold w.i.c.s. $M$
by setting  $T^Z_x M :=  Hom (\sm_x/\sm_x ^ 2, \R)$. The   universal property of
the K\"ahler differentials  implies 
that $T_x ^ZM$  can be identified with the space of all $\R$-valued derivations of $C^\infty _x (M)$  \cite[26.C]{Matsumura1980}, \cite[B.1.2]{Pflaum2000}.
 If $x$ is a regular point of $M$, then  $T_x^ZM = T_xM = \hat T_x M$.

 Now we compare   the Nash tangent cone and the Zariski tangent cone  at a given  singular  point $s \in  S_M$.  Without loss of generality we can assume that $M = cL \subset \R^{l+1}$.   Let $V \in \hat T_scL$. We set,  for $f \in C^\infty_s (cL)$,
 $$ V ( f) _s: =  V(\tilde f)_{s},$$
where $\tilde f  \in  C^\infty_x ( \R^n)$  such that the  restriction of $\tilde f$ to $cL$ is  $f$.  
By the definition of the Nash tangent cone there exists a sequence $x_n \in cL^{reg}$ such that $V(s) = \lim _{ x_n \to s} V(x_n)$, where $V(x_n) \in T_{x_n} cL^{reg}\subset \R^{l+1} \times \R^{l+1}$. Then  
$$ V(\tilde f) _s = \lim _{x_n \to s} V (\tilde f)_{ x_n} =\lim _{x_n \to s} V ( \tilde f_{|M^{reg}})_{ x_n}= \lim _{x_n \to s} V ( f)_{ x_n}.$$
Thus the above expression  $V(f)_s$
does not depend on the choice of  $\tilde f$. 
This defines a map $i:  \hat T_s M \to T_s ^Z M$. 
 
\begin{lemma}\label{zaris} 1. Let $\dim \sm_s / \sm_s ^ 2 = k$. Then there exist a neighborhood $\Nn_s (\eps)$  and
a  smooth embedding $\psi _s :\Nn_s (\eps) \to  \R^k$, i.e.  $\psi ^* (C^\infty ( \R^k)) = C^\infty (\Nn_s)$.

2. Assume that  the smooth structure on $M$ is Euclidean.  Then the  Zariski tangent cone is generated by  the Nash tangent cone, i.e. any element in $T^Z_s M$ is a linear combination of elements in $i(\hat T_s M)$.
\end{lemma}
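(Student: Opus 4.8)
The plan is to handle the two statements separately; in both cases I would push a neighborhood of $s$ into Euclidean space via the given embedding and reduce everything to the \emph{linear part} of the ideal cutting out $M$.

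For the first statement, recall that since $C^\infty(M)$ is finitely generated there is a smooth embedding $F:M\to\R^{l+1}$ with $F^*(C^\infty(\R^{l+1}))=C^\infty(M)$; I may assume $F(s)=0$. Because near $s$ the image $F(M)$ is closed in a neighborhood of $0$, the restriction facts for $C^\infty$-rings \cite[p.~20]{MR1991} identify the germ ring as $C^\infty_s(M)=C^\infty_0(\R^{l+1})/I$, where $I$ is the ideal of germs at $0$ vanishing on $F(M)$. Hence $\sm_s/\sm_s^2=\sm_0/(\sm_0^2+I)$, and the hypothesis $\dim\sm_s/\sm_s^2=k$ says precisely that the space of linear parts $W:=\{dh_0:h\in I\}\subset(\R^{l+1})^*$ has dimension $l+1-k$. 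After a linear change of coordinates I would arrange $W=\mathrm{span}(dx_{k+1},\dots,dx_{l+1})$ and pick $h_{k+1},\dots,h_{l+1}\in I$ with $h_i\equiv x_i\pmod{\sm_0^2}$.

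The key step is then the implicit function theorem. The map $H=(h_{k+1},\dots,h_{l+1})$ has $\partial H/\partial(x_{k+1},\dots,x_{l+1})(0)=\mathrm{Id}$, so near $0$ the zero set $\{H=0\}$ is the graph $\Gamma$ of smooth functions $x_i=\phi_i(x_1,\dots,x_k)$, $i>k$. Since each $h_i\in I$ vanishes on $F(M)$, the image $F(M)$ lies in $\Gamma$ near $0$, and the projection $p:(x_1,\dots,x_{l+1})\mapsto(x_1,\dots,x_k)$ restricts to a diffeomorphism of $\Gamma$ onto an open set of $\R^k$. I would set $\psi_s:=p\circ F$ on a small conical neighborhood $\Nn_s(\eps)$ with $F(\Nn_s(\eps))\subset\Gamma$; this is an embedding because $F$ is one and $p|_\Gamma$ is a diffeomorphism. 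Finally, on $\Gamma$ one has $x_i=\phi_i(x_1,\dots,x_k)$ for $i>k$, so each coordinate function—hence, as these generate $C^\infty(M)$, every element of $C^\infty(\Nn_s)$—is a smooth function of $x_1,\dots,x_k$, giving $\psi_s^*(C^\infty(\R^k))=C^\infty(\Nn_s)$.

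For the second statement I would take $M=cL\subset\R^{l+1}$ realized as the cone over $L\subset S^l(1)$, with $s=0$. The discussion preceding the lemma shows that $i:\hat T_sM\to T^Z_sM$ sends a Nash vector $V=\lim V(x_n)$ to the derivation $f\mapsto d\tilde f_0(V)$; since $dh_{x_n}(V(x_n))=0$ for every $h\in I$, passing to the limit shows $V$ annihilates $W$, so $i$ realizes $\hat T_sM$ as a subset of the subspace $T^Z_sM=W^\perp\subset\R^{l+1}=T^Z_0\R^{l+1}$. Thus $\mathrm{span}\,i(\hat T_sM)\subseteq T^Z_sM$ automatically, and by duality in $\R^{l+1}$ equality is equivalent to $(\mathrm{span}\,\hat T_sM)^\perp\subseteq W$. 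To verify this I would use the radial directions: by Example \ref{etcone} the tangent cone $T_scL=\bigcup_{x\in L}\R\,x$ consists of the radial vectors, and $T_scL\subseteq\hat T_sM$. Hence if $\ell\in(\R^{l+1})^*$ annihilates $\hat T_sM$, then $\ell(x)=0$ for all $x\in L$, so the linear function $\ell$ vanishes on the whole cone $cL$, whence $\ell\in I$ and $\ell\in W$. This gives $(\mathrm{span}\,\hat T_sM)^\perp\subseteq W$ and therefore $\mathrm{span}\,i(\hat T_sM)=T^Z_sM$.

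The main obstacle I expect is the bookkeeping in the first part: correctly identifying the germ ring with $C^\infty_0(\R^{l+1})/I$ and matching $\dim\sm_s/\sm_s^2$ with the codimension of $W$, after which the implicit function theorem does the real work. The second part carries little geometric content once the identification $i(\hat T_sM)\subseteq T^Z_sM\subset\R^{l+1}$ is in place, since the radial directions alone already span the Zariski tangent space.
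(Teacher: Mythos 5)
Your proposal is correct and takes essentially the same route as the paper: for part 1 the paper sketches the same Pflaum-style argument (\cite[Proposition 1.3.10]{Pflaum2000}), completing $df_1,\dots,df_k$ by differentials of functions from the vanishing ideal and then projecting to $\R^k$, which is exactly your implicit-function-theorem graph construction; for part 2 the paper likewise uses the radial vectors of Example \ref{etcone} to show that any constant $1$-form annihilating $\hat T_s cL$ vanishes on all of $cL$, hence lies in the ideal, so that the span of the Nash cone is dual to $\sm_s/\sm_s^2$ --- your $W$/$W^\perp$ formulation is just a repackaging of the paper's surjectivity of $T^*_s\R^k\to T^*_s cL$.
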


\begin{proof}   1) The first assertion is a special case of \cite[Proposition 1.3.10]{Pflaum2000}. For the convenience of the reader we  sketch here  the proof of this assertion. Assume the opposite, i.e. there is a 
smooth embedding $\Nn_s \to \R ^l$, where 
$$k +1 \le  l : = \min \{\dim \R ^d |\, \text { there is a smooth embedding from } \Nn_s \to \R ^d\}.$$
Choose  a neighborhood $\Nn_s (\eps)$ and $k$  functions $f_1, \cdots, f_k \in C^\infty (\Nn_s(\eps))$ such that $df_i (s)$ form a  basis  in $\sm _s /\sm_s ^2$. Let $\tilde f_i$ be
the extension  of $f_i$ to  a smooth functions on $\R^l$, whose existence follows from Proposition \ref{pflaum}.
Denote by $I$ the ideal  of smooth functions on $\R^l$ vanishing on $\Nn_s(\eps)$.   We choose  $f_{k+1}, \cdots, f_l
\in I$ such that $ d\tilde f_1, \cdots,  d\tilde f_l$  form a basis in $T^*_s(\R^l) =\tilde \sm_s / \tilde \sm_s ^ 2$.
Its follows that $f_1, \cdots, f_k :\Nn_s (\eps)\to \R^k$ is a smooth embedding. This proves the first  assertion.

2) It  suffices to prove this assertion for $M = cL \subset \R^{l+1}$. We first  show  that there exists a smooth embedding  $L \to S^{k-1}  = S^{l} \cap \R^k$ such that $\la \hat  T_s cL\ra_\R =  \R^{k} \subset \R^{l+1}= T_s \R^{l+1}$, where  $\la \hat  T_s cL\ra_\R$ is the linear span  of $\hat  T_s cL$ in $T_s \R^{l +1} = \R^{l+1}$.  
Let us denote   the linear span $\la\hat T_s cL\ra_\R$ in $ \R^{l+1}$  by $\R^ k$. Let $\{\alpha_i |\, i \in 1, n-k\}$ be linearly independent 1-forms on $\R^{l+1}$  annihilating
the subspace $\R^k$. Since  $\alpha_i$ annihilates any tangent vector   in $TL_s $  for $i\in [1, n-k]$, it follows that $L_s \subset \R^k$. Hence $L \subset S^{k-1} = S^l \cap \R^k$.
Since
$\alpha_i$ also annihilates the  radial vector field  $\p _t(x)$, we conclude that $cL_s \subset \R^k$. 

It follows  that the map $i : \la \hat T_s cL\ra _\R = \R^k \to T^Z _scL$ is surjective,  since 
$T^*_s \R ^k \to T^* _s cL$ is a surjective map.  Hence   Lemma \ref{zaris}  follows.  
\end{proof}

  

\section{Symplectic pseudomanifolds w.i.c.s.}

 In this section we introduce the notion of a conical symplectic form on a pseudomanifold $M$ w.i.c.s. We provide many known examples  of  symplectic  pseudomanifolds  w.i.c.s.  (Example \ref{exams1}). We prove
 that  any   conical symplectic form is smooth   with respect to some  Euclidean smooth structure $C^\infty(M)$  (Corollary   \ref{smoothom}).  In particular it  is smooth with respect to the smooth structures $C^\infty _e (M)  \subset C^\infty _w (M)$.
 We also show the existence and uniqueness up to homotopy of  a $C^1$-conical  Riemannian metric compatible  with
 given conical smooth symplectic structure  (Lemma \ref{comp}). We compare our concept with   some existing concepts  (Remark \ref{SLP}).  Finally we  show that the  Brylinski-Poisson homology  can be defined on a  symplectic pseudomanifold  w.i.c.s. $M$, if the conical symplectic form is compatible with a Poisson smooth  structure. Moreover,  its Brylinski-Poisson homology  groups are isomorphic to the deRham cohomology groups of $M$
 with  the reverse  grading  if the conical symplectic form is  also smooth with respect to the compatible  Poisson smooth structure  (Lemma \ref{bryl} and  Corollary \ref{hodge}).  We show non-trivial examples of  these symplectic-Poisson smooth  pseudomanifolds w.i.c.s.  (Remark \ref{SLP}).

 \begin{definition} A   pseudomanifold w.i.c.s. $M^{2n}$  is called
{\it conical symplectic}, if    $M^{reg}$ is provided with a  symplectic form  $\om$   and for each
singular point $s\in S_M$ there exists a singular chart $(\Nn_s, \phi_s, cL_s(\eps_s))$ such that the restriction of
$\om$ to $\Nn_s$ has the form  $\phi _s ^* ( \bar \om)$ with
\begin{equation}
\bar \om([z,t]) = t^2 \hat\om(z)  +  tdt \wedge \alpha (z),
\label{scone}
\end{equation}
where $\hat\om \in \Om ^2 (L_s)$ and  $\alpha  \in \Om ^1 (L_s)$.
\end{definition}

Sometime we also denote by $\om(\alpha)$ the  symplectic form defined by (\ref{scone}).
We call  $\om(\alpha)$  {\it a conical symplectic form}.

\begin{remark}\label{concave} 
1. Taking into  account $d\om  = 0$, formula (\ref{scone}) implies that $d\alpha = 2\hat \om$.
Hence $\alpha$ defines a contact structure on $L_s$, since $\bar \om ^ n =  t^{2n-1}\hat \om ^{n-1} dt \wedge \alpha\not=0$. Thus we can write $\bar\om ={1\over 2} d (t^2 \alpha)$. 

2. Let $V$ be the radial vector field on $cL_s$ such that $V(z,t) = t\p_t$. Then  we have
\begin{equation}
t^2 \cdot \alpha =  V(z,t)\rfloor  \bar \om,
\label{defc}
\end{equation}
\begin{equation}
\Ll_V ( \bar\om) =  d (t^2  \alpha) = t^2 d\alpha +  2t dt \wedge \alpha = 2\bar \om.
\label{lfield} 
\end{equation}
It follows  that  $M\setminus (\cup _{s\in S_M} \Nn_{s})$ is a  symplectic manifold with concave boundary. (Recall that a boundary $\p M$ of  a  symplectic manifold  $(M, \om)$ is called concave, if there exists a  vector
field $X$ defined near $\p M$ and pointing  inwards such that $\Ll_X \om = \om$, see e.g. \cite{McDuff1991} or \cite{EH2002}.) Such a vector field $X$ is called {\it a Liouville vector field}.
\end{remark}

\begin{example}\label{exams1} 1. Let $\alpha_0 $ be the restriction of  1-form $\sum_{i =1} ^{k+1}(x_idy_i - y_idx_i)$ on $\R ^{2k +2}$ to the sphere $S^{2k+1}(1)\subset \R^{2k+2}$.  Then the standard symplectic form $\om_0 = \sum _{i =1} ^{k+1} dx_i \wedge dy_i$ on $\R^{2k+2}$ can be  written as in formula (\ref{scone}) with $L_s =  S^{2k+1}(1)$.  Hence, a  symplectic manifold with $m$ marked points $s_i, i =\overline{ 1,m},$  is  conical symplectic.

2. Let $G$ be a finite group  of $U(n)$ acting freely on $S^{2n-1} \subset (\R^{2n}, \om, J)$. Then the  quotient  $\R^{2n}/G$
is a  conical symplectic manifold $cL$ with isolated singularity at $0$, where $L = S^{2n-1}(1)/G$. Using  (\ref{defc}) we  observe that
the contact form $\alpha $ on $S^{2n-1}$ is invariant under the action of $G$, since $G$ preserves  $\bar \om = \om _0$, the  vector field $V(z, t)$ and $\hat \om = (\om _0)_{ |  S^{2n-1}}$.

3. Let $H : =\{ z\in \C^{n+1} |\, Q(z) = 0\}$ is a hypersurface in $\C^{n +1}$, where $Q(z)$ is a homogeneous polynomial
such that  the projectivization  $P (H):= \{  z\in \C P ^n |\, Q( z) = 0\}   $ is a nonsingular   hypersurface.
Then  $(H, (\om_0) _{| H})$   is a   symplectic cone  $cL$,  whose   base $L\subset S^{2n+1}$ is a $S^1$-fibration over $P(H)$  equipped with the standard contact form $\alpha = (\alpha_0)_{|L}$.  A particular case  with $H = Q_3$ has been
considered  in \cite{Chan2006}.  

4. A  slightly different  example is the  closure $\bar \Oo_{min}$  of a  smallest non-zero nilpotent orbit
$\Oo_{min}$ of the  adjoint action on a simple complex  Lie algebra $\g$ \cite[2.6]{Beauville2000}, \cite{Panyushev1991}. The regular stratum $\bar \Oo_{min} ^ {reg}= \Oo_{min}$ is provided with the Kirillov-Kostant symplectic form, which  can be  checked easily that it is  a conical symplectic form, see also \cite[Example 3.6]{Le2010}.  Clearly $\bar \Oo_{min}$ is  a complex cone
over  the smooth variety $\Oo_{min} /\C^*\subset  P (\g)$. We note that a complex manifolds $M^{2n}_\C$ with a holomorphic symplectic form $\om^2$
carries a  real symplectic form $\tilde \om^2 =Re\, (\om ^2) + Im\, (\om^2)$.

5. Any symplectic manifold  $(M,\om)$   with concave  boundary  $\p M$ extends  to a conical symplectic manifold
with one singular point by  attaching to $M$ a  symplectic closed cone $\bar c \p M$  as follows. 
Define  a symplectic  form on $c \p  M$ by (\ref{scone}), see also Remark  \ref{concave}.2. Then we  glue  $\bar c \p M$ with  $(M, \om)$ using  Darboux's theorem, which states that  a symplectic neighborhood $(U(\p M), \om_{|U(\p M)})$ of $\p M$
 is symplectomorphic to  $(\p M \times (1-\eps,1+\eps),\om(\alpha)) $, where $\om(\alpha)$ is defined by  (\ref{scone}), see also  \cite[exercise 3.36]{McS1998}.
 
6. Any  contact 3-manifold $(M^3, \xi)$ is a  concave  boundary  of some symplectic manifold  $(M^4,\om)$ \cite[Theorem 1.3]{EH2002}.  Attaching  a symplectic cone to $(M^4, \om)$  as above we get a   conical symplectic pseudomanifold.
\end{example}

Assume that $L_s$ is a singularity link of a conical symplectic pseudomanifold w.i.c.s. and $\alpha$ is defined by (\ref{scone}).

\begin{lemma}\label{symex} There exists a smooth embedding $I_s:(L_s, \alpha) \to  (S^{2k+1}(1),  \alpha _0)$ such that
$I_s ^*(\alpha_0) = \alpha$, if $k \ge n(n-1)$, where $n-1 = \dim L_s$.
This embedding gives rise to a  symplectic embedding $\hat I_s : (cL_s^{reg} , \om (\alpha)) \to (\R^{2k+2}, \om_0)$.
\end{lemma}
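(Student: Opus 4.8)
The plan is to separate the statement into its two assertions---the isocontact embedding $I_s$ of the link, and the symplectic embedding $\hat I_s$ of the cone---and to dispose of the second first, since it follows from the first by a direct computation, after which everything reduces to constructing $I_s$.

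\textbf{The reduction.} Write $\lambda_0 = \sum_{i=1}^{k+1}(x_i\,dy_i - y_i\,dx_i)$ for the Liouville form on $\R^{2k+2}$, so that $\om_0 = \tfrac12 d\lambda_0$ and $\alpha_0 = \lambda_0|_{S^{2k+1}(1)}$ in the notation of Example \ref{exams1}.1. I would define the induced cone map $\hat I_s : cL_s^{reg}\to\R^{2k+2}$ by $\hat I_s([z,t]) = t\cdot I_s(z)$, with $t$ acting by scalar multiplication; since $I_s(z)\in S^{2k+1}(1)$, this is an embedding of $cL_s^{reg}=L_s\times(0,\eps_s)$. Setting $\tilde x_i = x_i\circ I_s$, $\tilde y_i = y_i\circ I_s$ and substituting $x_i=t\tilde x_i$, $y_i=t\tilde y_i$ into $\lambda_0$, the $dt$-terms cancel pairwise and one gets
\[
\hat I_s^*\lambda_0 = t^2\sum_{i=1}^{k+1}(\tilde x_i\,d\tilde y_i - \tilde y_i\,d\tilde x_i) = t^2\, I_s^*\alpha_0 = t^2\alpha ,
\]
using $I_s^*\alpha_0 = \alpha$. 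Hence $\hat I_s^*\om_0 = \tfrac12 d(t^2\alpha) = \bar\om(\alpha)$ by Remark \ref{concave}.1, so $\hat I_s$ is a symplectic embedding. This leaves only the construction of $I_s$.

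\textbf{The construction of $I_s$.} The key observation is that $\alpha$ is exactly a connection datum. Since $\alpha\in\Om^1(L_s)$, the form $i\alpha$ is a principal connection $1$-form on the trivial bundle $L_s\times U(1)\to L_s$, with curvature $i\,d\alpha$; note that $d\alpha$ is exact, so there is no Chern-class obstruction to realizing this as a pullback. On the other side, the Hopf bundle $S^{2k+1}(1)\to\C P^k$ carries the standard connection form $\alpha_0$, and this is the \emph{universal} $U(1)$-connection in the sense of Narasimhan and Ramanan: once the ambient dimension is large enough---precisely in their range, which is the hypothesis $N\ge n(n-1)$ with $n-1=\dim L_s$---every connection on a $U(1)$-bundle over a base of dimension $\dim L_s$ is the pullback of $\alpha_0$ under an equivariant bundle map. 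Applying this to $(L_s\times U(1),\,i\alpha)$ produces an equivariant map $\Phi : L_s\times U(1)\to S^{2k+1}(1)$ with $\Phi^*\alpha_0$ equal to our connection; restricting $\Phi$ to the canonical section $L_s\times\{1\}$ yields $I_s : L_s\to S^{2k+1}(1)$ with $I_s^*\alpha_0 = \alpha$, since the connection form read off in that trivializing section is precisely $\alpha$.

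\textbf{The main obstacle.} What remains is to arrange that $I_s$ is an embedding rather than merely a smooth map, and this is where the quadratic bound on $N$ is genuinely needed. Because $2\dim L_s+1$ lies far below $n(n-1)$, there is ample codimension, and I would upgrade $I_s$ to an injective immersion by a general-position argument, promoting it to an embedding using compactness of $L_s$. The delicate point---and the step I expect to be hardest---is to carry this out while keeping the identity $I_s^*\alpha_0=\alpha$ exact: an arbitrary perturbation of $I_s$ destroys the pullback condition, so the perturbations must stay within the class of bundle maps supplied by the universal-connection construction, and one must check that for $N$ in the stated range this class still carries enough free parameters (a transversality/jet-genericity argument on the classifying map) to force injectivity and immersivity without disturbing $I_s^*\alpha_0=\alpha$.
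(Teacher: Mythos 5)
Your reduction of the second assertion to the first is correct, and it is exactly what the paper intends (the paper compresses it to ``follows from the first one, using Example \ref{exams1}.1''; your explicit computation $\hat I_s^*\lambda_0=t^2\alpha$, hence $\hat I_s^*\om_0=\tfrac12 d(t^2\alpha)=\bar\om$, is the content of that citation). For the first assertion you take a genuinely different route: the paper never mentions universal connections. Instead it uses a Nash-type covering of $L$ by $n+1$ families of disjoint balls, writes $\alpha=\sum_{i}\rho_i\alpha$ for a subordinate partition of unity, embeds each piece exactly into $\bigl(\R^{2n},\sum_j y^j\,dz^j\bigr)$ by the cited Proposition \ref{LOA3}, takes the product map $\tilde f=(f_0,\dots,f_n)$ --- an embedding with $\tilde f^*(\gamma_{n(n+1)})=\alpha$ landing near the origin --- and then transplants a neighborhood of the image into the round sphere by the Darboux theorem for the contact form $\alpha_1=dz+\sum x^j_k\,dy^j_k$ on $\R^{2n(n+1)+1}$. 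Your Narasimhan--Ramanan scheme (trivial $U(1)$-bundle, connection $i\alpha$, pull back the universal connection on the Hopf bundle, restrict to the section $L_s\times\{1\}$) is sound in spirit and would be a genuinely different proof if completed; compactness of $L_s$ and the restriction-to-section computation work as you say.

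But the step you yourself flag as hardest is a genuine gap, and your proposed repair would not close it as stated. Two concrete points. First, half of your worry is unnecessary: any smooth map with $I_s^*\alpha_0=\alpha$ is \emph{automatically} an immersion, because $\alpha$ is a contact form (Remark \ref{concave}.1) --- if $dI_s(v)=0$ then $\alpha(v)=0$ and $\iota_v\,d\alpha=0$, which forces $v=0$ by nondegeneracy of $d\alpha$ on $\ker\alpha$; with $L_s$ compact, only injectivity is at stake. Second, for injectivity your plan (generic perturbation ``within the class of bundle maps supplied by the universal-connection construction'') is a hope, not an argument: Narasimhan--Ramanan produces \emph{one} classifying map with no control whatever on double points, and nothing in their theorem describes the space of connection-preserving maps for a \emph{fixed} connection richly enough to run a transversality argument; note also that the naive sphere-level stabilization $x\mapsto(\cos\theta\,I(x),\sin\theta\,J(x))$ rescales the pullback to $\cos^2\theta\,\alpha$ and so fails. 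The standard repair is to stabilize in the linear model: append an embedding $g:L_s\to\{y=0\}\subset\R^{2m}$, on which $\sum y\,dz$ pulls back to zero, so that $(f,g)$ is injective and still pulls the form back to $\alpha$ --- but to return to $(S^{2k+1},\alpha_0)$ one must then invoke the contact Darboux theorem, at which point you have reconstructed the paper's proof. Finally, your claim that $N\ge n(n-1)$ ``is precisely the Narasimhan--Ramanan range'' is asserted, not checked: the NR bound for $U(1)$-connections over an $(n-1)$-manifold is not $n(n-1)$, and the paper's own construction lands in $S^{2n(n+1)+1}$; you cannot borrow the lemma's numerical hypothesis for a different construction without verifying it.
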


\begin{proof}  
Let $(L^n, \alpha)$ be a smooth $n$-dimensional manifold equipped with a $1$-form $\alpha$.
Using the Nash trick  we can find   an open covering $A_i$ on $L^n$: 
\begin{equation}
 L^n = \cup _{i =0} ^{n}  A_i \label{A.2}
 \end{equation}
such that each $A_i$ is  the union of  disjoint open balls $D_{i,j}$, $j=1, \dots, J(i)$ 
on $M^n$.
(Pick a simplicial decomposition of $L^n$ and construct $A_i$ by the induction on $i$.  
Let $D_{0,j}$ be a small coordinate neighborhood of the $j$-th vertex.  
We may assume that they are mutually disjoint.  
Set $A_0=\cup_{j=1}^{J(0)} D_{0,j}$.  
Suppose that $A_0, \dots, A_i$ are defined.  
Let $D_{i+1,j}$ be a small coordinate neighborhood, which contains  
$S^{i+1}_j \setminus \cup_{\ell=0}^i A_{\ell}$, 
where $S^{i+1}_j$ is the $j$-th $i+1$-dimensional simplex.  
We may assume that they are mutually disjoint.  
Set $A_{i+1}=\cup_{j=1}^{J(i+1)} D_{i+1,j}$.  
Hence we obtain desired open sets $A_0, \dots, A_n$.)   

Let $\{ \rho_i \}$ be a partition of unity on $L^n$ subordinate to the
covering $\{ A_i \}$. We write $\alpha(x) =\sum_{i =0}^{n} \rho_i(x) \cdot \alpha$.  
Clearly
the form $\alpha _i = \rho_i (x) \cdot \alpha$  has support on $A_i$.

Let $ \gamma_n: = \sum_{j=1}^{n}  y^j  dz^j$ be a smooth 1-form on $\R^{2n} (y^i, z^i)$. Let us recall

\begin{proposition}\cite[Proposition A.3]{Le2007}\label{LOA3} There is an embedding $f^i : A_i \to (\R^{2n}(y^k_i, z^k_i), \gamma_n)$ such that $f_i^* (\gamma_{n}) = \alpha_i.$  Moreover  $f^i$ can be chosen such that the image $f^i (A_i)$
lies in arbitrary small neighborhood of the origin $0 \in \R^{2n}$.
\end{proposition} 

Now we construct  an embedding $\tilde f : M\to \R^{2n (n+1)} $ by setting
$\tilde f  : = (f_0, \cdots, f_{n})$.
Clearly $\tilde f ^*(\gamma _{n(n+1)} ) =  \alpha$.  

By Proposition \ref{LOA3}  for any  arbitrary small neighborhood $O_\eps(0)$ of the origin  $0$  of $\R^{2n(n+1)}$ there exists a smooth embedding $f: L^ n \to O_\eps (0) \subset \R ^{2n(n+1)}$
such that $f^*(\sum_{k = 1} ^ {n+1} \sum_{j =1} ^{n} x^j _k dy ^j _k) = \alpha$. Here $(x^j_k, y^j_k)$ are coordinates on
$\R ^{2n(n+1)}$.  
 Now let $\alpha_1 := dz + \sum_{k = 1} ^ {n+1} \sum_{j =1} ^{2n} x^j _k dy ^i _k$ be  a contact form on  $\R^{2n(n+1) +1}$. Let $\tilde O_\eps (0) \supset O_\eps (0)$ be 
a small neighborhood  of $0\in \R^{2n(n+1) +1}$ such that there exists a diffeomorphism $\psi : \tilde O_\eps (0) \to U\subset S^{2n(n+1)+1}$ satisfying $\psi ^*(\alpha_0) = \alpha_1$. The existence of $\tilde O_\eps(0)$ together with $\psi$ follows from the Darboux theorem for contact manifolds. This completes the proof of the first assertion. The second assertion follows  from the first one, using Example \ref{exams1}.1. This completes the proof of Lemma \ref{symex}.
\end{proof}

From Proposition \ref{symex} we get immediately that  any conical symplectic pseudomanifold w.i.c.s. admits a smooth structure which
is compatible with  the given conical symplectic form, i.e.  the symplectic form  on a singular chart $\Nn_s$ is induced by  the smooth embedding  $\hat I_s : cL_s \to \R^{2l}$, defined in Lemma \ref{symex}.
Let us consider one such compatible smooth structure  on a conical symplectic manifold.  Proposition \ref{pflaum} implies immediately

\begin{corollary}\label{smoothom}  Any  conical symplectic structure is smooth with respect to  some Euclidean smooth structure
$C^\infty (M)$. In particular,  any conical symplectic structure  is  smooth with respect to the  smooth structures  $C^\infty_e (M)$,  $C^\infty _w (M)$.
\end{corollary}

Recall that a  conical Riemannian metric $g$ on a pseudomanifold w.i.c.s. $M$   is a  Riemannian metric on $M^{reg}$ such that
for all $s \in S_M$
the restriction of $g$ to a conical  neighborhood $\Nn_s$ has the form  $dt^2 + t^2 g_{|L_s}$, see e.g. \cite{Lesch1996}, \cite[6.3.4]{DLN2008}.
Further,  a Riemannian metric $g$ on $M^{reg}$ is called  compatible with a  symplectic form $\om$, if there exists an almost complex structure 
$J$ on $M^{reg}$ such that $g(X, Y) = \om (X, JY)$ is  a Riemannian metric on $M^{reg}$.  If the  resulting  metric $g$ is conical, we call
 $J$ a {\it  conical compatible almost  complex structure}. Now let $\om$ be a conical symplectic form defined by (\ref{scone}). Denote by $R$ the Reeb field on the contact manifold $(L_s, \alpha)$.
Let $J$ be   a conical almost complex structure   on $M$  compatible with   $\om$.  Since $g( \p_t , TL_s) = 0$, we get $J\p_t \in  TL_s$. Furthermore, using 
$ \om (J\p_t , \ker \om_{| L_s}) = 0$ and $\om (\p _t, R/t) = 1$, we obtain $ J(t\p_t) =  R$.  Thus any conical Riemannian metric on $(M,\om)$  compatible with  $\om$ has the form
$g  = dt ^2 +  t^2 (d\alpha ^2 + g_{|\ker \alpha})$. 

\begin{lemma}\label{comp} 1. Any    conical symplectic  pseudomanifold  w.i.c.s. $(M^{2m},  \om)$ admits a   compatible    conical Riemannian
metric  $g$, which is unique up to homotopy.

2. Let $C^\infty (M)$ be  a Euclidean smooth structure  described in Corollary \ref{smoothom}.  Any  compatible conical Riemannian metric is  also  smooth  with respect to $C^\infty (M)$ except at the singular points, where it is $C^1$-smooth. 

3. Any compatible conical  Riemannian metric  is smooth w.r.t. $C^\infty _e (M)$, $ C^\infty _w (M)$.
\end{lemma}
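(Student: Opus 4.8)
The plan is to establish the three assertions in turn: existence together with uniqueness up to homotopy; smoothness on $M^{reg}$ and genuine smoothness everywhere for $C^\infty_e(M)$ and $C^\infty_w(M)$; and the fact that for the embedded structure $C^\infty(M)$ of Corollary \ref{smoothom} the metric is smooth away from $S_M$ but only $C^1$ at the singular points.

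For existence and uniqueness up to homotopy I would run the standard contractibility argument for compatible almost complex structures, adapted to the cone. The fibrewise space of linear complex structures compatible with a linear symplectic form is contractible, so over $M^{reg}$ the space of $\om$-compatible almost complex structures is the space of sections of a bundle with contractible fibres, hence nonempty and contractible. On a conical chart $\Nn_s$ the discussion preceding the Lemma shows that conicality plus compatibility forces $J(t\p_t)=R$, where $R$ is the Reeb field of $(L_s,\alpha)$, so a conical compatible structure on $\Nn_s$ is exactly a choice of complex structure on $\ker\alpha$ compatible with $d\alpha|_{\ker\alpha}$, and the space of these is again contractible. I would then patch: cover $M$ by the charts $\Nn_s$ and by $M\setminus\bigcup_s\Nn_{s}(\tfrac12\eps_s)$, choose conical compatible metrics near the singular points and an arbitrary compatible metric elsewhere, and assemble them with a partition of unity from Proposition \ref{fin}. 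Since one cannot average almost complex structures directly, I would average the associated metrics and return to a compatible one by the fibrewise polar decomposition $A=J|A|$ of the endomorphism defined by $\om(\cdot,\cdot)=g(A\cdot,\cdot)$; convex combinations of conical compatible metrics keep the forced block $dt^2+t^2\,\alpha\otimes\alpha$ fixed and move only the $\ker\alpha$-block within a convex set, so the output stays conical near each $s$. Applying the same homotopy to two compatible conical metrics produces a path through such metrics, giving uniqueness up to homotopy.

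For the regularity statements I would start from the normal form $g=dt^2+t^2 g_{L_s}$ with $g_{L_s}=\alpha\otimes\alpha+g_{|\ker\alpha}$ a genuine smooth metric on the link, derived just before the Lemma. On $M^{reg}$ every structure restricts to the canonical one and $g$ is a classical smooth metric there, so smoothness away from $S_M$ is immediate (and at a trivial singular point there is no singularity, by Lemma \ref{opt}, so $g$ is smooth there too). For $C^\infty_w(M)$ I would pass to the canonical resolution $\pi:\tilde M\to M$, which near $s$ is the collar $L_s\times[0,\infty)$ with $L_s\times\{0\}$ collapsed; there $\pi^*g=dt^2+t^2 g_{L_s}$ is an honest smooth (boundary-degenerate) symmetric $2$-tensor on the product, whence $g$ is smooth for $C^\infty_w$. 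For $C^\infty_e(M)$ I would use that this structure already contains $t$ and, crucially, every function $t^2\psi$ with $\psi\in C^\infty(L_s)$, since $t^2\psi=(t\psi)\cdot(t\cdot 1)$ is a product of two generators (Lemma \ref{wcone}.2, Remark \ref{exotic}.4); together with $dt\otimes dt$ (legal because $t\in C^\infty_e$) this exhibits $g$ as a smooth tensor for $C^\infty_e$, hence also for the larger $C^\infty_w$.

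The delicate point, and the main obstacle, is the behaviour at a nontrivial singular point $s$ for the embedded Euclidean structure $C^\infty(M)$, realised by the cone $\hat I_s:cL_s\to\R^{2l}$ over $I_s(L_s)\subset S^{2l-1}$ from Lemma \ref{symex}. Its generators are $\tilde x_i=t\,(I_s)_i$, and the pullback of the flat metric is $\sum_i d\tilde x_i\otimes d\tilde x_i=dt^2+t^2 g^{\mathrm{round}}_{L_s}$, a tensor smooth up to $s$ (its Euclidean coefficients are constant). I would write $g=\sum_i d\tilde x_i\otimes d\tilde x_i+t^2 h$ with $h=g_{L_s}-g^{\mathrm{round}}_{L_s}$ a fixed smooth $2$-tensor on $L_s$, and evaluate on smooth Nash vector fields $X,Y$, which vanish at $s$ by Proposition \ref{dynamic}.1 since $s$ is nontrivial. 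The first summand contributes $\sum_i (X\tilde x_i)(Y\tilde x_i)\in C^\infty(M)$, while $t^2 h(X,Y)$ has, near $s$, leading term homogeneous of degree two in the Euclidean coordinates with a generically non-polynomial angular part; such functions are $C^1$ but not $C^2$ at the origin of $\R^{2l}$, and they fail to lie in $C^\infty(M)$ precisely because the non-exotic structure $C^\infty(M)$ sees only quadratic-polynomial angular dependence (whereas $C^\infty_e$ sees all of $t^2\psi$, which is why the same metric is fully smooth there). This gives exactly $C^1$-regularity at $s$ and no more. The crux is the regularity bookkeeping of the remainder $t^2h$: controlling it by matching the second-order vanishing of $t^2$ against the order-one vanishing of smooth vector fields at $s$, and separating the degree-two homogeneous functions that the embedded structure admits (quadratic polynomials) from those it admits only to first order.
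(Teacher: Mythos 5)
Your existence-and-uniqueness argument and your $C^1$ analysis essentially parallel the paper, but there is a genuine gap in the part concerning smoothness with respect to $C^\infty_e(M)$ and $C^\infty_w(M)$: you are missing the Nash isometric embedding theorem, which is what the paper's proof actually runs on. Smoothness of a tensor in this paper's sense (Definition \ref{smoothf}, adapted to symmetric $2$-tensors) requires a local representation $\sum f_0\, df_1\otimes df_2$ with \emph{all} the functions $f_i$ in the given structure near the apex. Showing that the scalar functions $t^2\psi$ lie in $C^\infty_e$ is necessary but not sufficient: in your expression $g=dt\otimes dt+t^2 g_{L_s}$ the second summand is written against a coframe of forms coming from $L_s$ (your $du^a$, or $dh_i$ for an arbitrary embedding $L_s\to\R^N$), and the coefficient functions $u^a$, $h_i$, $G_{ij}$ do not extend continuously to the cone point, hence are not in $C^\infty_e$; if you substitute $t\,dh_i=d(th_i)-h_i\,dt$ you create cross terms $h_i\,dt$ with non-smooth coefficients that you cannot absorb. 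The paper's device is to take a Nash isometric embedding $e=(e_1,\dots,e_{N+1}):(L_s,g_{L_s})\to S^N(1)$; then $\sum_i e_i^2=1$ forces $\sum_i e_i\,de_i=0$, so the cross terms cancel and $dt^2+t^2g_{L_s}=\sum_i d(te_i)\otimes d(te_i)$ with every $te_i\in C^\infty_e(cL_s)$ by Remark \ref{exotic}.4. This single identity also settles $C^\infty_w$, since $C^\infty_e\subset C^\infty_w$; your separate $C^\infty_w$ argument via the resolution shows only that the \emph{pullback} $\pi^*g$ is a smooth tensor on $L_s\times[0,\infty)$, which again does not by itself produce a representation with $C^\infty_w$-coefficients as Definition \ref{smoothf} demands.

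On the remaining parts you are close to the paper. Existence and uniqueness up to homotopy is done there by sections of the bundle $\Mm_{cone}(M^{reg},\om)$ with contractible fibers $Sp(2m-2)/U(m-1)$ glued by the partition of unity of Proposition \ref{fin}; your polar-decomposition averaging makes explicit a convexity point the paper glosses over, which is fine. For the $C^1$ statement your analytic core --- that $|x|^2 q(x/|x|)$ with $q$ smooth on the sphere is $C^1$ at the origin --- is exactly what the paper verifies (it extends the metric to $\hat g=dt^2+t^2(d\alpha_0^2+\bar g_{|\ker\alpha_0})$ on $\R^{2l+2}$, using a compatible metric $\bar g$ on the whole sphere restricting correctly on $I_s(L_s)$, and checks continuity of $\p_{x_1}\bigl[(\sum_i x_i^2)(\bar g-g_0)(x/|x|)\bigr]$ by reduction to great circles). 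Two cautions here: first, your $h=g_{L_s}-g^{\mathrm{round}}_{L_s}$ lives only on $L_s$, so you need the same ambient extension step before regularity at $0\in\R^{2l}$ even makes sense; second, the detour through Nash vector fields is off-target --- by Proposition \ref{dynamic}.1 such fields vanish at a nontrivial $s$, which only makes $t^2h(X,Y)$ vanish to higher order and cannot detect the $C^1$-versus-$C^\infty$ dichotomy, since the regularity claim is about the ambient coefficient functions of $\hat g$, not about pairings with vector fields.
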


\begin{proof}  Let us consider the fiber bundle  $\Mm(M^{reg}, \om)\to M^{reg}$ whose fiber  $\Mm (x)$ consists of  all  Riemannian metrics  compatible with  symplectic form $\om(x)$. It is well-known that $\Mm(x) = Sp (2m)/U(m)$ is contractible. Now let us consider the subspace   $\Mm_{cone} (M^{reg}, \om)\subset \Mm(M^{reg}, \om)$ consisting of conical  Riemannian metrics.
The fiber  $\Mm_{cone} (y)$ for $y = [x,t] \in \Nn_s$ consists of Riemmanian metrics  of the form $dt ^2 + t ^2 (d\alpha ^2
+  g' _{|\ker \alpha}$), see above. This fiber is isomorphic to the space $Sp(2m-2)/U(m-1)$, so it is  contractible.
Let us take a section $s: \cup_{s\in S_M}L_s \to\cup_{s\in S_M}\Mm_{cone}(M^{reg},\om)_{|L_s}$. This section extends to  a smooth section of $\Mm(M^{reg}\setminus \cup_{s\in S_M}\Nn_s, \om)$.  It also  extends  smoothly on  $\cup_{s\in S_M}(\Nn_s, s)$ by setting
$g(y=[x,t]): = dt ^2 + t ^2 (d\alpha ^2(x)
+  g' _{|\ker \alpha(x)}$)  for $ y \in \Nn_s$. Using  a smooth partition of unity we get the existence of a  compatible conical
Riemannian metric on $M^{reg}$  by gluing these local sections.  The uniqueness up to homotopy  follows from the fact that the restriction of  two sections $g_1$ and $g_2$ of   $\Mm_{cone} (M^{reg}, \om)$  to $\cup _{s\in S_M} L_s$ are homotopic
over $\cup_{s \in S_M}L_s$, furthermore  this homotopy can be extended to  a homotopy by sections of   $\Mm_{cone} (M^{reg}, \om)$ joining $g_1$ and $g_2$ using smooth partitions of unity.   This proves the first assertion of Lemma
\ref{comp}.

Let us prove the second assertion of Lemma \ref{comp}. Choose an   embedding $I_s : L_s \to S^{2l+1}$ satisfying the condition of Lemma \ref{symex}.
Let $g$  denote the restriction of a compatible conical metric $\tilde g$ on $M^{reg}$ to $L_s$.
We note that  there exists a metric  $\bar g$  on  $S^{2l+1}$, which is compatible with $\alpha_0$,  i.e.  $\bar g (R, R) = 1$,
$\bar g (R, \ker \alpha_0) = 0$ and the restriction of $\bar \g$ to $\ker \alpha_0$ is compatible with $d\alpha_0$,
and moreover,
the restriction of $\bar g$ to $I_s ( L_s)$ coincides with  the induced metric $(I_s ^{-1})^* g$,  (note that $I_s ^{-1}$ is defined only on the image of $I_s$).
Denote by $g_0$ the  Euclidean metric  on $\R^{2l+2}$. Note that $g_0$ can be written as
$dt ^2 + t^2 (d\alpha_0 ^2 + (g_0)_ {| \ker \alpha _0})$.
Set $\hat g:= dt ^2 + t^2 (d\alpha_0 ^2 + \bar g_ {| \ker \alpha _0})$. 
We claim that $\hat g$ is a $C^\infty$-metric on $\R ^{2l+2}\setminus \{ 0\}$  and  it is $C^1$-smooth at $0\in \R^{2l+2}$. Substituting $t^2 = x_1^2 + \cdots + x_{2l+2}^2$  we reduce the proof of this assertion  to verifying that the function $ f(x):= \p _{x_1}  [ (x_1 ^2 +\cdots + x_{2l+2} ^ 2) (\bar g -g _0)(x/|x|)]$
is a continuous function on $\R^{2l+2}$.  Note that $(\bar  g - g_0)$ is a  smooth  quadratic form
on $S^{2l+1}$, so its restriction to any great circle $S^1 \subset S^{2l+1}$ is smooth.  Thus we can  reduce this  smoothness problem to the case  when $l =0$, where   the validity of our claim follows  by using
the  identity $(\arctan x)' = {1\over 1+x ^ 2}$  and expressing the   coordinates $(x_1, y_1)$ in terms of  polar coordinates $(r, \theta)$.  
This proves the second assertion of  Lemma \ref{comp}.

The last  assertion  of  Lemma \ref{comp} follows from the Nash embedding theorem  which  asserts that any Riemannian manifold admits
an isometric embedding into  sphere $S^N (1) \subset \R^{N +1}$, if $N$ is  large enough, thus the conical compatible Riemannian metric is smooth with respect to this ``new" embedding,  and taking into account the fact that
$C^\infty _e (M)$ contains any subalgebra $C^\infty (M)$  associated  with some Euclidean smooth structure on $M$.
\end{proof}

\begin{remark}\label{SLP}  Let us compare our definition of a   conical symplectic structure with the definition  of a symplectic 
structure on stratified  symplectic manifolds given by  Sjamaar and Lerman in \cite[Definition 1.12]{SL1991}.  In that paper they   define  a symplectic structure
on a stratified  symplectic manifold  $M$ to be a  subalgebra  $C^\infty (M)$ of  the algebra  $C^0(M)$ of  continuous functions on $M$  such that   $C^\infty (M)$ is  equipped with a Poisson bracket with the following property.  
The restriction of $C^\infty(M)$ to each  smooth   symplectic stratum  $S$ of $M$ is a Poisson subalgebra of the   Poisson algebra  of smooth functions on $S$. 

Let us denote
 by $G_{\om_0}$ the following linear bivector 
$$ G_{\om_0} = \p y_1 \wedge \p x_1 + \cdots  + \p y_n \wedge \p x_n.$$
It is known that $G_{\om_0}$ does not depend on the choice of  symplectic coordinates $(x_i, y _i)$ on $\R ^{2n}$  \cite[ \S 1.1]{Brylinski1988}. Given a symplectic  form $\om$  on  a pseudomanifold  w.i.c.s. $M$,  $\om$ defines a Poisson structure on $C^\infty(M)$ by the formula $\{ f, g\}_\om := G_{\om} ( df \wedge dg)$, if and only if
$G(\om)$ extends to a smooth section of $\Lambda ^2 T^Z (M)$ (i.e. $i(G_\om)$ sends  smooth  differential forms  to
smooth differential forms).   A smooth structure  $C^\infty (M)$ equipped with  such  a smooth section $G(\om)$
is called {\it a compatible Poisson smooth structure}.
Examples of   conical symplectic manifolds with  a compatible Poisson smooth structure are the  quotient $(M,\om)/G$ with isolated   singularity where $G$ is a compact  subgroup  of $Sym (M, \om)$, and   certain singular symplectic  reductions \cite{SL1991}, \cite{LMS1993}, see also a detailed explanation in \cite[Example 3.4]{Le2010}. 
 Another example  of a compatible smooth Poisson structure on a conical symplectic manifold is  a  resolvable smooth structure on the closure $\bar \Oo_{min}$ of an even minimal nilpotent orbit $\Oo_{min}$ in  complex semisimple Lie  
algebras, see Remarks \ref{exams1}.3  above. A detailed explanation is given in   \cite[Example 3.6]{Le2010}. 
\end{remark}

We end this section by introducing  the notion of  the  symplectic homology  (also called Brylinski-Poisson homology) on a conical  pseudomanifold with a compatible smooth Poisson  structure $C^\infty (M)$.   
Let $\Om^p (M^m)$ be the space of all smooth differential p-forms on $M$. Then  $\Om (M)=   \oplus_{p = 0} ^m\Om ^p (M^m)$. By Remark \ref{injf} $i^* (\Om (M)) \cong \Om (M)$ is a subalgebra in $\Om (M^{reg})$.  

We consider the {\it canonical complex}
$$\to \Om ^{n+1}(M) \stackrel{\delta}{\to}  \Om^n (M) \to ...  $$
where $\delta$ is  a  linear operator  defined as follows. Let $\alpha \in \Om (M)$  and $\alpha = \sum_j f_0  ^j df_1 ^j \wedge  df_p ^j$ be a local representation of $\alpha$ as in  Definition \ref{smoothf}.  Then we set  (cf. \cite{Kozsul1985},  \cite[Lemma 1.2.1]{Brylinski1988})
 $$ \delta (f_0 df_1 \wedge \cdots df_n): = \sum_{i =1}^n (-1) ^{i+1} \{ f_0 , f_i\}_\om df_1 \wedge \cdots \wedge  \widehat{df_i} \wedge  \cdots \wedge df_n $$
  $$ + \sum_{1\le i < j \le n} f_0 d \{ f_i, f_j\}_\om \wedge df_1 \wedge \cdots \wedge \widehat {df_i} \wedge  \cdots \wedge \widehat {df_j} \wedge \cdots \wedge df_n.$$

\begin{lemma}\label{bryl}  1) We have  $\delta  =  i(G_\om)\circ d - d \circ  i(G_\om)$. In particular  $\delta$ is well-defined.

2) $\delta^2 = 0$. 
\end{lemma}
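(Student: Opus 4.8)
The plan is to reduce both assertions to classical facts about the Koszul--Brylinski operator on the genuine symplectic manifold $(M^{reg},\om)$, exploiting that $\Om(M)$ embeds into $\Om(M^{reg})$. First I would record that the two operators on the right-hand side of (1) are already defined on $\Om(M)$: the exterior differential $d\colon\Om(M)\to\Om(M)$ was constructed above, while the contraction $i(G_\om)$ maps $\Om(M)$ into itself precisely because the Poisson smooth structure is \emph{compatible}, i.e. $G_\om$ is a smooth section of $\Lambda^2 T^Z(M)$ whose contraction carries smooth forms to smooth forms (Remark \ref{SLP}). In particular $\{f,g\}_\om=i(G_\om)(df\wedge dg)\in C^\infty(M)$ for $f,g\in C^\infty(M)$, so the explicit formula for $\delta$ does land in $\Om(M)$. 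By Remark \ref{injf} the restriction $i^*\colon\Om(M)\to\Om(M^{reg})$ is injective, and it manifestly intertwines $d$ and $i(G_\om)$ with the ordinary exterior derivative and bivector contraction on $M^{reg}$; hence it suffices to prove (1) and (2) after restricting to $M^{reg}$, where $\om$ is a true symplectic form and $G_\om$ the associated nondegenerate Poisson bivector.

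For part (1) I would verify the operator identity on a decomposable generator $\alpha=f_0\,df_1\wedge\cdots\wedge df_n$ by a direct computation on $M^{reg}$. Expanding $d\alpha=df_0\wedge df_1\wedge\cdots\wedge df_n$ and applying $i(G_\om)$, then expanding $i(G_\om)\alpha$ and applying $d$, and using the Leibniz rule together with $i(G_\om)(df\wedge dg)=\{f,g\}_\om$, one collects exactly the two sums appearing in the definition of $\delta$; this is the content of \cite[Lemma 1.2.1]{Brylinski1988}. The decisive structural point is that once $\delta$ is exhibited as $i(G_\om)\circ d-d\circ i(G_\om)$, the right-hand side no longer refers to any local representation $\alpha=\sum_j f_0^j\,df_1^j\wedge\cdots\wedge df_p^j$; consequently the formula defining $\delta$ is independent of the chosen representation, which is exactly the asserted well-definedness.

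For part (2) I would argue at the level of graded commutators of operators on $\Om(M^{reg})$. Writing $\delta=[i(G_\om),d]$ for the graded commutator, one first checks $[\delta,d]=\delta d+d\delta=0$ from $d^2=0$ and the graded Jacobi identity. The graded Jacobi identity then reduces $\delta^2=\tfrac12[\delta,\delta]$ to $\tfrac12\bigl[[\delta,i(G_\om)],d\bigr]$, and Koszul's identity $[[i(G_\om),d],i(G_\om)]=i([G_\om,G_\om]_S)$ rewrites this as a contraction with the Schouten--Nijenhuis bracket $[G_\om,G_\om]_S$ (up to the usual normalizing constant), see \cite{Kozsul1985}. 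Since $\om$ is closed and nondegenerate on $M^{reg}$, the bivector $G_\om$ is Poisson, i.e. $[G_\om,G_\om]_S=0$, whence $\delta^2=0$ on $\Om(M^{reg})$; by injectivity of $i^*$ this yields $\delta^2=0$ on $\Om(M)$.

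The main obstacle I anticipate lies not in the symplectic--Poisson algebra on $M^{reg}$, which is entirely classical, but in making the reduction to $M^{reg}$ airtight: one must be certain that $d$, $i(G_\om)$, and hence $\delta$ really are restrictions of the classical operators, so that $i^*$ intertwines everything, and that the compatibility hypothesis of Remark \ref{SLP} is genuinely what guarantees $\delta$ preserves $\Om(M)$ rather than merely being formally applicable. Given Remark \ref{injf} and the compatibility assumption these checks are routine, so the essential difficulty here is organizational rather than computational.
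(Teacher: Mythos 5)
Your proposal is correct and follows essentially the same route as the paper: both reduce everything to $M^{reg}$ via the injectivity of $i^*$ (Remark \ref{injf}), use the compatibility of the Poisson smooth structure to see that $d$, $i(G_\om)$, and hence $\delta$ preserve $i^*(\Om(M))\subset\Om(M^{reg})$, and invoke \cite[Lemma 1.2.1]{Brylinski1988} for the operator identity on the regular stratum, with well-definedness following because the right-hand side is representation-independent. The only difference is that for $\delta^2=0$ the paper merely cites the classical fact on $M^{reg}$ (via locality of $\delta$) and restricts back to $M$, whereas you reprove that fact with Koszul's graded-commutator identity and $[G_\om,G_\om]_S=0$ --- a harmless, self-contained expansion of the same argument.
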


\begin{proof} 1) The first assertion of Lemma \ref{bryl} has been proved for case of a smooth Poisson manifold $M^{reg}$ by Brylinski
in \cite[Lemma 1.2.1]{Brylinski1988}.   Since both $\delta$ and  $i(G_\om)\circ d - d \circ  i(G_\om)$
are local operators  and preserve  the  subspace $i^*(\Om (M) )\subset \Om (M^{reg})$,  Lemma \ref{bryl}.1
follows  from \cite[Lemma 1.2.1]{Brylinski1988}.

2) To prove the second  assertion of Lemma \ref{bryl} we note that $\delta ^ 2 (\alpha) ( x) = 0 $ for all $x \in M^{reg}$, since
$\delta$ is  local operator by   the first assertion.  Hence $\delta ^2  (\alpha) (x) = 0$ for all $x \in M$. 
\end{proof}

 We denote by $*_\om$ the symplectic star operator
$$*_\om: \Lambda  ^p  (\R ^ {2n}) \to \Lambda ^{2n-p} (\R^{2n}) $$
satisfying 
$$\beta \wedge *_\om \alpha = G^k (\beta, \alpha)vol,  \text{ where  } vol = \om ^n/n!.$$

Now let us consider a  conical symplectic neighborhood $(M^{2n}, \om )$  with  a compatible Poisson smooth structure.
Operator $*_\om: \Lambda ^p T_x ^* M^{reg}\to \Lambda ^{2n-p}T_x^* M^{reg}$ extends to    a linear operator $*_\om: \Om ^ p (M^ {reg}) \to  \Om ^{2n-p}(M ^{reg})$.  In particular, we have
$*_\om ( i^*(\Om ^ p ( M))) \subset  \Om ^{2n-p} (M ^{reg})$.
 
\begin{proposition}\label{sstar}  Suppose that  a conical symplectic form $\om$  on $M^{2n}$ is compatible with
a smooth Poisson structure $C^\infty (M^{2n})$.  If $\om$ is also smooth w.r.t. $C^\infty (M^{2n})$  then $*_\om (i^*(\Om ^k (M^{2n}))) = i^*(\Om ^{2n -k} (M^{2n}))$. 
\end{proposition}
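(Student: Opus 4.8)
The plan is to realise the symplectic star operator as a (non-commutative) polynomial in the two operators $L_\om(\beta):=\om\wedge\beta$ and $\Lambda_\om:=i(G_\om)$, and then to observe that each of these two operators individually preserves the space $i^*(\Om^\bullet(M))$ of globally smooth forms. Once this is established, $*_\om$ will automatically preserve $\bigoplus_k i^*(\Om^k(M))$, and the claimed equality will follow from the involutivity of $*_\om$.

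First I would check that $L_\om$ and $\Lambda_\om$ preserve smoothness. Since $\om$ is smooth with respect to $C^\infty(M)$ by hypothesis, we have $\om\in i^*(\Om^2(M))$; as $\Om(M)$ is closed under the wedge product, $L_\om$ maps $i^*(\Om^k(M))$ into $i^*(\Om^{k+2}(M))$. Likewise, because $C^\infty(M)$ is a compatible Poisson smooth structure, the operator $i(G_\om)$ sends smooth differential forms to smooth differential forms by the very definition recalled in Remark \ref{SLP}; hence $\Lambda_\om$ maps $i^*(\Om^k(M))$ into $i^*(\Om^{k-2}(M))$. Consequently any polynomial in $L_\om$ and $\Lambda_\om$ preserves $\bigoplus_k i^*(\Om^k(M))$. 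I emphasize that $\Lambda_\om$ is exactly the contraction $i(G_\om)$ entering the compatibility hypothesis, so that this hypothesis delivers the required smoothness on the nose.

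The algebraic core is the claim that $*_\om$ lies in the associative subalgebra of $\mathrm{End}(\Om^\bullet(M^{reg}))$ generated over $\R$ by $L_\om$ and $\Lambda_\om$, by one and the same universal polynomial at every point. Pointwise on $M^{reg}$ this is a statement on a $2n$-dimensional symplectic vector space: the operators $L_\om$, $\Lambda_\om$ and $H:=[L_\om,\Lambda_\om]$ (acting as $n-p$ on $p$-forms) define an $\mathfrak{sl}_2$-action, and by the Lefschetz decomposition $*_\om$ carries the component $L_\om^{\,r}\beta$ of a primitive $s$-form $\beta$ to a scalar multiple $c(s,r)\,L_\om^{\,n-s-r}\beta$, where $c(s,r)$ depends only on $s$ and $r$ and not on the chosen primitive form $\beta$ (cf.\ \cite[\S 2]{Brylinski1988}). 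Thus $*_\om$ preserves each $\mathfrak{sl}_2$-isotypic component of $\Lambda^\bullet$ and acts there by one and the same endomorphism on every multiplicity copy; by the double-commutant theorem for the completely reducible $\mathfrak{sl}_2$-module $\Lambda^\bullet$, this is precisely the condition for $*_\om$ to lie in the image of $U(\mathfrak{sl}_2)$, i.e.\ in the algebra generated by $L_\om$ and $\Lambda_\om$ (together with $H=[L_\om,\Lambda_\om]$). Combining this with the previous paragraph gives $*_\om\bigl(i^*(\Om^k(M))\bigr)\subseteq i^*(\Om^{2n-k}(M))$. Finally, Brylinski's identity $*_\om\circ *_\om=\mathrm{id}$ \cite{Brylinski1988} shows that $*_\om$ is a bijection of $\Om^\bullet(M^{reg})$ which restricts to smooth forms; applying it to $i^*(\Om^{2n-k}(M))$ yields the reverse inclusion, whence equality.

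The main obstacle is the algebraic core of the third paragraph: verifying that $*_\om$ genuinely lies in the subalgebra generated by $L_\om$ and $\Lambda_\om$. The two preservation statements and the involutivity are formal, but the $\mathfrak{sl}_2$-uniformity of the coefficients $c(s,r)$ across multiplicity spaces must be extracted carefully from the explicit Lefschetz-type formula for $*_\om$; this is where one leans on Brylinski's computation, and one must make sure the bivector $G_\om$ used to define $*_\om$ is identified with the one whose contraction $i(G_\om)$ is assumed to preserve smooth forms.
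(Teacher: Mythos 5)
Your proof is correct, but it takes a genuinely different route from the paper's. The paper argues by induction on degree: it introduces the subspace $\Om_A(M^{2n})$ of smooth forms whose $*_\om$-image is again smooth, gets the base cases from the smoothness of $\om^n$ (which generates $\Om^{2n}(M^{2n})$ as a $C^\infty(M^{2n})$-module), shows $*_\om$-stability of $\Om_A$ from $*_\om^2=\mathrm{id}$ (Lemma \ref{symm}), and --- the key step --- proves $d(\Om_A(M^{2n}))\subset\Om_A(M^{2n})$ via Brylinski's identity $\delta\beta=(-1)^{\deg\beta+1}*_\om d*_\om\beta$ combined with Lemma \ref{bryl}, where $\delta=i(G_\om)\circ d-d\circ i(G_\om)$ preserves smooth forms by the Poisson-compatibility hypothesis; the induction closes because $\Om^k(M^{2n})$ is generated over $C^\infty(M^{2n})$ by differentials of smooth $(k-1)$-forms. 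You bypass $\delta$ entirely: you express $*_\om$ as one universal noncommutative polynomial in $L_\om$ and $\Lambda_\om=i(G_\om)$ via the Lefschetz ($\mathfrak{sl}_2$) decomposition and the double commutant theorem, and observe that each generator preserves $i^*(\Om(M^{2n}))$ --- $L_\om$ because $\om$ is smooth and smooth forms are closed under wedge, $\Lambda_\om$ by the very definition of a compatible Poisson smooth structure in Remark \ref{SLP}. Both arguments consume the same two hypotheses at the same two points and both finish with $*_\om^2=\mathrm{id}$; yours makes the role of each hypothesis maximally transparent and yields the stronger structural fact that $*_\om$ is a polynomial in the two smoothness-preserving operators, at the cost of importing representation theory (note that, working over $\R$, one should say that the finite-dimensional irreducible $\mathfrak{sl}_2$-modules are absolutely irreducible, so Schur's lemma gives scalars and the double-commutant identification of the image of $U(\mathfrak{sl}_2)$ applies; the Weil-type formula $*_\om(L_\om^r\beta)=c(s,r)L_\om^{n-s-r}\beta$ for primitive $\beta$ is in \cite{Yan1996} as well as \cite{Brylinski1988}), whereas the paper's induction is elementary and reuses the operator $\delta$ that it needs anyway for Corollary \ref{hodge}. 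One bookkeeping point worth making explicit: a polynomial in $L_\om$ and $\Lambda_\om$ a priori only preserves the total space $\bigoplus_k i^*(\Om^k(M^{2n}))$, but since each monomial is homogeneous and $*_\om\gamma$ is concentrated in degree $2n-k$ for $\gamma$ of degree $k$, the degree-$(2n-k)$ component of the polynomial's output is smooth and equals $*_\om\gamma$, so the graded inclusion, and then equality via involutivity, follows as you claim.
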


\begin{proof}  We set 
$$\Om _A (M^{2n}) : = \{ \gamma \in  \Om ( M^{2n}) |\:  *_\om i^*(\gamma) \in i^*(\Om (M^{2n}))\}.$$

To prove   Proposition \ref{sstar}, it suffices to show that $\Om _A (M^{2n}) = \Om (M^{2n})$.
Note that the $C^\infty(M^{2n})$-module $\Om ^{2n}(M^{2n})$ is generated by $\om ^n$ since $\om^n$ is  smooth with respect to $C^\infty (M^{2n})$ and  $C^\infty (M^{reg})$-module $\Om^{2n} (M^{reg})$  is generated by $\om^n$.  Furthermore, $*_\om(i^* f) = i^*(f) i^*( \om^n)$  for any $f \in C^\infty (M^{2n})$. 
This  proves $*_\om (i^*(C^\infty  (M^{2n}))) = i^*(\Om ^{2n}(M^{2n}))$.  In particular $\Om ^0 ( M^{2n}) \subset  \Om _A ( X^{2n})$,
and $\Om ^{2n} (X^{2n}) \subset \Om _A (X^{2n})$.
 
\begin{lemma}\label{symm} We have
$$*_\om (i^*( \Om _A (M^{2n}))) = i^*( \Om _A (M^{2n})).$$
\end{lemma}

\begin{proof} [Proof of Lemma \ref{symm}] Let $\gamma \in \Om _A (M^{2n})$.  By definition $*_\om (i^*\gamma)  = \beta \in i^*(\Om   (M^{2n}))$.   Using  the identity $*_\om  ^ 2 = Id $, see e.g. \cite[Lemma 2.1.2]{Brylinski1988}, we get $*_\om \beta  = i^*\gamma$.
It follows $\beta \in i^*(\Om_A (M^{2n}))$. This proves that $*_\om (i^*( \Om _A (M^{2n}))) \subset i^*(\Om _A (M^{2n}))$. Taking into account $*_\om ^ 2 = Id$, this proves Lemma \ref{symm}.
\end{proof}

\begin{lemma}\label{A1}  1.  $\Om_A (M^{2n})$ is a $C^\infty (M)$-module.\\
2.  $d (\Om_A (M^{2n})) \subset \Om _A (M^{2n})$.
\end{lemma}

\begin{proof}[Proof of Lemma \ref{A1}]  1. The first  assertion follows from the  identity $*_\om ( i^*f(x) \phi (x)) =
i^*(f(x))\cdot *_\om i^*(\phi(x))$ for $x\in M^{reg}$, $f \in C^\infty(M)$, $\phi \in \Om^\infty (M^{2n})$ , and using the fact that $\Om (M^{2n})$ is a $C^\infty (M^{2n})$-module.

2. To prove the second assertion it suffices to  show that  for any $\gamma \in \Om _A (M^{2n})$ we have $ *_\om (i^*(d \gamma)) \in \Om (M^{2n})$.  Using
Lemma \ref{symm} we can write $i^*(\gamma) = *_\om \beta $ for some $\beta \in i^*(\Om _A (M^{2n}))$. Since $\beta \in \Om (M^{reg})$,  we can apply the identity $\delta \beta = (-1) ^{deg\, \beta  +1} *_\om d *_\om$  \cite[Theorem 2.2.1]{Brylinski1988},  which implies 
$$ *_\om i^*( d\gamma)) = * _\om d *_\om \beta = (-1) ^{deg\, \beta  +1}\delta (\beta) \in i^*( \Om(M^{2n})),$$
since $i^* \circ \delta = \delta \circ i^*$.
Hence $ d\gamma \in \Om _A (M^{2n})$.  This proves the second assertion of Lemma \ref{A1}.
\end{proof}

Let us complete  the proof of Proposition \ref{sstar}.  
Since $\Om ^1 (M^{2n})$ is a $C^\infty (M^{2n})$-module  whose generators
are    differentials $df$,  $f\in C^\infty (M^{2n})$, using Lemma \ref{A1} we obtain that $\Om ^1 (M^{2n}) \subset \Om _A (M^{2n})$.
Inductively,  we observe that $\Om ^k (M^{2n})$ is a $C^\infty (M^{2n})$-module whose generators are   the k-forms $d(\phi (x))$, where
$\phi (x) \in \Om ^{k-1} (M^{2n})$. By Lemma \ref{A1}, $\Om ^k  (M^{2n})\subset \Om _A (M^{2n})$,  if $\Om ^{k-1} (M^{2n}) \subset
\Om_A (M^{2n})$. 
This completes the proof of Proposition \ref{sstar}.
\end{proof}

From  Proposition \ref{sstar} we get immediately

\begin{corollary}\label{hodge} Suppose $(M, \om, C^\infty (M))$ is a smooth conical symplectic pseudomanifold
satisfying the  conditions  in  Proposition \ref{sstar}. The  Brylinski-Poisson  homology  of the complex $(\Om(M) , \delta)$  is isomorphic
to  the de Rham cohomology with   reverse grading : $H_k (\Om( M), \delta) = H ^{m -k} (\Om , d)$.  It is equal to the
 singular cohomology $H^{m-k}(M, \R)$, if the smooth structure is locally  smoothly contractible.
\end{corollary}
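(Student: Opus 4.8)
The plan is to treat the symplectic star operator $*_\om$ as an explicit chain isomorphism between the canonical complex $(\Om(M),\delta)$ and the de Rham complex $(\Om(M),d)$, and then to invoke a de Rham-type theorem valid for locally smoothly contractible structures in order to identify the resulting de Rham cohomology with singular cohomology. Throughout, $m=2n$, and we use Remark \ref{injf} to regard $\Om(M)$ as the subcomplex $i^*(\Om(M))\subset\Om(M^{reg})$.

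First I would collect the ingredients that are already available. By Proposition \ref{sstar}, under the stated hypotheses $*_\om$ restricts to a linear isomorphism $i^*(\Om^k(M))\to i^*(\Om^{2n-k}(M))$ for every $k$, so $*_\om$ is a degree-reversing linear automorphism of $\Om(M)$; moreover $*_\om^2=\mathrm{Id}$ by \cite[Lemma 2.1.2]{Brylinski1988}. On the regular stratum Brylinski's formula \cite[Theorem 2.2.1]{Brylinski1988} gives $\delta=(-1)^{\deg+1}*_\om\,d\,*_\om$, and since both sides are local operators preserving $i^*(\Om(M))$ (exactly as used in the proof of Lemma \ref{A1}), this identity holds on $\Om(M)$.

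Next I would derive the intertwining relation. Applying $*_\om$ to the identity $\delta=(-1)^{k+1}*_\om\,d\,*_\om$ on $\Om^k(M)$ and using $*_\om^2=\mathrm{Id}$ yields $*_\om\,\delta=(-1)^{k+1}d\,*_\om$ on $\Om^k(M)$. Thus $*_\om$ carries the differential $\delta\colon\Om^k\to\Om^{k-1}$ to $d\colon\Om^{2n-k}\to\Om^{2n-k+1}$ up to an invertible scalar; being bijective in each degree, it sends $\ker(\delta|_{\Om^k})$ bijectively onto $\ker(d|_{\Om^{2n-k}})$ and $\mathrm{im}(\delta\colon\Om^{k+1}\to\Om^k)$ bijectively onto $\mathrm{im}(d\colon\Om^{2n-k-1}\to\Om^{2n-k})$. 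Hence $*_\om$ descends to an isomorphism
$$
H_k(\Om(M),\delta)\ \xrightarrow{\cong}\ H^{2n-k}(\Om(M),d),
$$
which is precisely the asserted $H_k(\Om(M),\delta)=H^{m-k}(\Om,d)$.

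Finally, for the identification with singular cohomology I would invoke the de Rham theorem in Mostow's framework \cite{Mostow1979}. Local smooth contractibility supplies, about each point, a smooth contracting homotopy $\sigma$, and integration along $\sigma$ produces the usual chain-homotopy operator; this gives a local Poincar\'e lemma, so the complex of sheaves of smooth forms is a resolution of the constant sheaf $\R$. Combined with the smooth partitions of unity of Proposition \ref{fin}, which render the sheaves of smooth forms fine and hence acyclic, the standard argument identifies $H^{m-k}(\Om(M),d)$ with $H^{m-k}(M,\R)$. I expect the main obstacle to lie in this last step: one has to verify that the contracting homotopy produces a genuine chain homotopy on the smooth forms near the singular points, where the admissible forms are constrained only through the chosen smooth structure, and it is exactly here that local smooth contractibility, rather than mere topological contractibility, is indispensable.
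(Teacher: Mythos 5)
Your proposal is correct and follows essentially the route the paper intends when it deduces the corollary ``immediately'' from Proposition \ref{sstar}: the identity $\delta=(-1)^{\deg+1}*_\om\,d\,*_\om$ together with $*_\om^2=\mathrm{Id}$ (both already invoked in the proof of Lemma \ref{A1}) makes $*_\om$ a degree-reversing chain isomorphism between $(\Om(M),\delta)$ and $(\Om(M),d)$, and the identification with singular cohomology is Mostow's de Rham theorem for locally smoothly contractible structures, using the partitions of unity of Proposition \ref{fin}. Your closing observation correctly locates where local \emph{smooth} contractibility, rather than topological contractibility, is needed, namely in making the contracting homotopy yield a chain homotopy on the admissible forms near the singular points.
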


We like to mention that a theory of De Rham cohomology for    symplectic quotients has been considered by Sjamaar  in \cite{Sjamaar2005}.

\section{Concluding remarks}
\begin{enumerate}
\item   We have introduced  the notion of smooth structures with many good properties on conical pseudomanifolds. Some  of our results  has been extended to   a larger class of singular spaces, see  \cite{Le2010}.   Our    concept of smooth structures and  smooth symplectic structures    comprises
many known examples in algebraic geometry  and in the orbifold theory.

\item It would be interesting to investigate, when a smooth structure $C^\infty _{\tilde M} M$ given by a resolution of $M$
is finitely generated.

\item It would be interesting to find a sufficient condition for  the nonvanishing of   characteristic classes
of a  smooth conical pseudomanifold  $(M, C^\infty (M))$.
 
\item It  would be interesting to find a necessary  or sufficient condition   for a conical symplectic manifold
to admit a compatible    Poisson smooth structure.

\item  It would be interesting to develop  a Hodge theory   for a compact  smooth conical Riemannian  pseudomanifolds
and compare these results with those   developed  by Cheeger in \cite{Cheeger1983}.

\item It would be  interesting to find   sufficient conditions for developing   a Gromov-Witten theory on  smooth compact conical symplectic manifolds,  which may lead to  new invariants for  symplectic manifolds with concave boundary.

\end{enumerate}


{\bf Acknowledgement}  
H.V.L. thanks Dmitri Panyushev for  explaining     her his paper \cite{Panyushev1991}. 

\bigskip
\author{H\^ong V\^an L\^e},
\address{Institute of Mathematics of ASCR,
Zitna 25, 11567 Praha 1, Czech Republic}\\
\author{Petr Somberg},
\address{ Mathematical Institute,
Charles University,
Sokolovska 83,
180 00 Praha 8,
Czech Republic.}\\
\author{Ji\v ri Van\v zura},
\address{Institute of Mathematics of  ASCR, Zizkova 22, 61662 Brno, Czech Republic}

\end{document}